\DeclareRobustCommand{\noopsort}[1]{}
\title{Residually Constructible Extensions}
\author{Pietro Freni}
\author{Angus Matthews}
\thanks{
The first author's work was supported by EPSRC Fellowship EP/T018461/1 at the University of Leeds, by the Czech Science Foundation grant 26-22545I, and by the Czech Academy of Sciences CAS (RVO 67985840).
The second author’s work was supported by EPSRC DTP 2224 [EP/W524372/1] (at the University of Leeds)}
\address{Institute of Mathematics Czech Academy of Sciences, Žitná 25
110 00 Praha 1, Czech Republic}
\address{School of Mathematics, University of Leeds, Leeds LS2 9JT, United Kingdom}
\subjclass[2020]{Primary 03C64; Secondary 12J10 12J15}
\keywords{model theory, o-minimality, valued fields, residually constructible extensions, pseudocompletions, weakly immediate types}
\newtheorem*{rep@theorem}{\rep@title}
\newcommand{\newreptheorem}[2]{%
\newenvironment{rep#1}[1]{%
 \def\rep@title{#2 \ref{##1}}%
 \begin{rep@theorem}}%
 {\end{rep@theorem}}}
\newtheorem{theorem}{Theorem}
\newtheorem{lemma}[theorem]{Lemma}
\newtheorem{proposition}[theorem]{Proposition}
\newtheorem{corollary}[theorem]{Corollary}
\newtheorem{fact}[theorem]{Fact}
\newtheorem{claim}{Claim}[theorem]
\theoremstyle{remark}
\newtheorem{remark}[theorem]{Remark}
\theoremstyle{definition}
\newtheorem{definition}[theorem]{Definition}
\newtheorem{example}[theorem]{Example}
\theoremstyle{plain}
\newcommand{\bE}{\mathbb E}
\newcommand{\bF}{\mathbb F}
\newcommand{\bK}{\mathbb K}
\newcommand{\bN}{\mathbb N}
\newcommand{\bP}{\mathbb P}
\newcommand{\bQ}{\mathbb Q}
\newcommand{\bR}{\mathbb R}
\newcommand{\bU}{\mathbb U}
\newcommand{\cF}{\mathcal F}
\newcommand{\cO}{\mathcal O}
\newcommand{\co}{\mathfrak o}
\newcommand{\oa}{\overline{a}}
\newcommand{\ob}{\overline{b}}
\newcommand{\convex}{\mathrm{convex}}
\newcommand{\tame}{\mathrm{tame}}
\newcommand{\defin}{\mathrm{def}}
\newcommand{\RCF}{\mathrm{RCF}}
\newcommand{\RCVF}{\mathrm{RCVF}}
\newcommand{\ox}{\overline{x}}
\newcommand{\oy}{\overline{y}}
\newcommand{\oz}{\overline{z}}
\DeclareMathOperator{\val}{\mathbf{v}}
\DeclareMathOperator{\rv}{\mathbf{rv}}
\DeclareMathOperator{\res}{\mathbf{res}}
\DeclareMathOperator{\std}{\mathrm{st}}
\DeclareMathOperator{\tp}{\mathrm{tp}}
\DeclareMathOperator{\dcl}{\mathrm{dcl}}
\DeclareMathOperator{\CH}{\mathrm{CH}}
\DeclareMathOperator{\Aut}{\mathrm{Aut}}
\newcommand{\Lhp}{(\!(}
\newcommand{\Rhp}{)\!)}
\newcommand{\var}[1]{\mathrm{#1}}
\theoremstyle{plain}
\newcounter{introthmcounter}
\newtheorem{thmA}{Theorem}[introthmcounter]
\newcommand\blfootnote[1]{%
  \begingroup
  \renewcommand\thefootnote{}\footnote{#1}%
  \addtocounter{footnote}{-1}%
  \endgroup
}
\begin{document}

\begin{abstract}
    Let $T$ be a complete o-minimal theory expanding $\RCF$ and $T_\mathrm{convex}$ be the common theory of its models expanded by predicate for a non-trivial $T$-convex valuation ring.
    We call an elementary extension $(\bE, \cO) \prec (\bE_*, \cO_*) \models T_\mathrm{convex}$ \emph{res-constructible} if there is a tuple $\overline{s}$ in $\cO_*$ such that $\bE_*=\dcl(\bE,\overline{s})$, and the projection $\res(\overline{s})$ of $\overline{s}$ in the residue field sort is $\mathrm{dcl}$-independent over the residue field $\res(\bE, \cO)$ of $(\bE, \cO)$.
    We study factorization properties of res-constructible extensions. Our main result is that a res-constructible extension $(\bE, \cO) \prec (\bE_*, \cO_*)$ has the property that all $(\bE_1, \cO_1)$ with $(\bE, \cO) \prec (\bE_1, \cO_1) \prec (\bE_*, \cO_*)$ are res-constructible over $(\bE, \cO)$, if and only if $\bE_*$ has countable $\dcl$-dimension over $\bE$ or its value group $\val(\bE_*, \cO_*)$ is \emph{short} (i.e.\ contains no uncountable well-ordered subset).
    This analysis entails complete answers to \cite[Problem~5.12]{tressl2006pseudo}.
\end{abstract}

%*****arxiv abstract (no macros)*****
%Let $T$ be an o-minimal theory expanding $\mathrm{RCF}$ and $T_\mathrm{convex}$ be the common theory of its models expanded by predicate for a non-trivial $T$-convex valuation ring. We call an elementary extension $(\mathbb{E}, \mathcal{O}) \prec (\mathbb{E}_*, \mathcal{O}_*) \models T_{\mathrm{convex}}$ $\textit{res-constructible}$ if there is a tuple $\overline{s}$ in $\mathcal{O}_*$ such that $\mathbb{E}_* = \mathrm{dcl}(\mathbb{E},\overline{s})$, and the projection $\mathbf{res}(\overline{s})$ of $\overline{s}$ in the residue field sort is $\mathrm{dcl}$-independent over the residue field $\mathbf{res}(\mathbb{E}, \mathcal{O})$ of $(\mathbb{E}, \mathcal{O})$. We study factorization properties of res-constructible extensions. Our main result is that a res-constructible extension $(\mathbb{E}, \mathcal{O}) \prec (\mathbb{E}_*, \mathcal{O}_*)$ has the property that all $(\mathbb{E}_1, \mathcal{O}_1)$ with $(\mathbb{E}, \mathcal{O}) \prec (\mathbb{E}_1, \mathcal{O}_1) \prec (\mathbb{E}_*, \mathcal{O}_*)$ are res-constructible over $(\mathbb{E}, \mathcal{O})$, if and only if $\mathbb{E}_*$ has countable $\mathrm{dcl}$-dimension over $\mathbb{E}$ or the value group $\mathbf{val}(\mathbb{E}_*, \mathcal{O}_*)$ is \emph{short} (i.e.\ contains no uncountable well-ordered subset). This analysis entails complete answers to [11, Problem 5.12].

\maketitle
\blfootnote{For the purpose of open access, the authors have applied a Creative Commons Attribution (CC BY) licence to any Author Accepted Manuscript version arising from this submission. There is no data associated with this publication.}

\section{Introduction}

Let $T$ be a complete o-minimal $L$-theory expanding the theory $\RCF$ of real closed fields in some language $L$ containing the language of ordered rings. There has been significant work on this class of theories; for example $T$ could be the theory $T_{\exp}$ of the field of reals expanded by the natural exponential function (\cite{wilkie1996model}) or the theory $T_{an, \exp}$ of the reals expanded by the natural exponential and all restricted analytic functions (\cite{dries1994real}).

Recall from \cite[(2.7)]{dries1995t}, that a \emph{$T$-convex} valuation subring $\cO$ of a model $\bE\models T$ is a convex subring closed under all continuous $\emptyset$-definable functions $f: \bE \to \bE$.

By \cite[(3.13) and (3.14)]{dries1995t}, the common theory $T_\convex$ in the language $L_\convex\coloneqq  L \cup \{\cO\}$ of all models of $T$ expanded by a unary predicate $\cO$ interpreted as a non-trivial $T$-convex valuation ring, is complete and weakly o-minimal.

The results in \cite[(2.12)]{dries1995t}, \cite[Thm.~B]{dries1997t}, and \cite[Ch.~12 and 13]{tyne2003t} entail that if $T$ is power bounded with field of exponents $\Lambda$, then for every elementary extension $(\bE, \cO) \preceq (\bE_*, \cO_*)\models T_\convex$ one can find a $\dcl_T$-basis of $\bE_*$ over $\bE$ of the form $(\overline{r}, \overline{v}, \overline{e})$ where:
\begin{enumerate}
    \item $\overline{r}$ is a tuple in $\cO_*$ such that the projection $\res(\overline{r})$ of $\overline{r}$ in the residue field $\res(\bE_*)$ is $\dcl_T$-independent over the residue field $\res(\bE)$ of $(\bE, \cO)$;
    \item $\overline{v}$ is a tuple in $\bE_*$ such that $\val(\overline{v})$ is $\Lambda$-linearly independent over the value group $\val(\bE)$ of $\bE$;
    \item $(\bE_*, \cO_*)$ is an immediate extension of $(\bE\langle \overline{r}, \overline{v}\rangle, \cO_*\cap \bE \langle \overline{r}, \overline{v}\rangle)$, where $\bE\langle \overline{r}, \overline{v}\rangle\coloneqq \dcl(\bE, \overline{r}, \overline{v})$.
\end{enumerate}

Even when $T$ is not power bounded, we will call tuples like $\overline{r}$ in item (1) above \emph{$\cO_*$-res-constructions} over $\bE$ (Definition~\ref{def:res_constructible}). In this paper we will be concerned with \emph{res-constructible} elementary extensions of models of $T_\convex$, that is, elementary extensions $(\bE_*, \cO_*) \succeq (\bE, \cO)$ such that for some $\cO_*$-res-construction $\overline{r}$ over $\bE$, $\bE_* = \bE \langle \overline{r}\rangle$.

Specific instances of res-constructible extensions were considered in \cite{tressl2006pseudo}: more precisely, a res-constructible extension $(\bE, \cO) \preceq (\bE_*, \cO_*)$ such that the underlying residue field extension $\res(\bE, \cO) \preceq \res(\bE_*, \cO_*)$ is a Cauchy-completion, is exactly a \emph{pseudo-completion} of $\bE$ with respect to $\{\cO\}$ in the sense of \cite[Def.~4.2]{tressl2006pseudo}.

After showing that pseudo-completions are unique up to isomorphisms but admit non-surjective self-embeddings, Tressl in \cite[Problem~5.12]{tressl2006pseudo} asks whether given $(\bE, \cO) \preceq (\bE', \cO') \preceq (\bE_1, \cO_1) \preceq (\bE_*, \cO_*)$ such that $\bE_*$ and $\bE'$ are pseudo-completions of $\bE$ with respect to $\{\cO\}$, one has that necessarily $\bE_1$ is also a pseudo-completion of $\bE$ with respect to $\{\cO\}$.

In this paper we will study the more general question of whether res-constructible extensions are closed under taking right-factors, i.e.\ whether (or rather more appropriately ``when'') given a composition of elementary extensions $(\bE, \cO) \preceq (\bE_1, \cO_1) \preceq (\bE_*, \cO_*)$ of models of $T$-convex such that the composite extension $(\bE, \cO) \preceq (\bE_*, \cO_*)$ is res-constructible, one can deduce that $(\bE, \cO) \preceq (\bE_1, \cO_1)$ is also res-constructible.

We show that in general res-constructible extensions are not closed under right factors and give a complete characterization of those res-constructible extensions $(\bE, \cO)\preceq (\bE_*, \cO_*)$ all of whose right-factors $(\bE, \cO) \preceq (\bE_1, \cO_1)$ are res-constructible.

\begin{thmA}\label{introthm:Main}
    Suppose $(\bE, \cO)\preceq (\bE_*, \cO_*)$ is res-constructible. Then the following are equivalent:
    \begin{enumerate}
        \item $\dim_{\dcl}(\bE_*/\bE)$ is countable or the value group $\val(\bE_*, \cO_*)$ of $(\bE_*, \cO_*)$ is short (i.e.\ contains no uncountable well order, cf \cite[p.~88]{rosenstein1982linear}, \cite{aschenbrenner2025analytic}, \cite{aschenbrenner2025short});
        \item for all $(\bE, \cO) \prec (\bE_1, \cO_1) \prec (\bE_*, \cO_*)$, the extension $(\bE, \cO) \prec (\bE_1, \cO_1)$ is res-constructible;
        \item for all $(\bE, \cO) \prec (\bE_1, \cO_1) \prec (\bE_*, \cO_*)$, with $\res(\bE_1, \cO_1)=\res(\bE_*, \cO_*)$, the extension $(\bE, \cO) \prec (\bE_1, \cO_1)$ is res-constructible.
    \end{enumerate}

\end{thmA}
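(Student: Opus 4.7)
The three conditions will be shown equivalent via the cycle $(1)\Rightarrow(2)\Rightarrow(3)\Rightarrow(1)$; the implication $(2)\Rightarrow(3)$ is immediate, since (3) is the restriction of (2) to intermediate extensions preserving the full residue field.

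For $(1)\Rightarrow(2)$, fix an $\cO_*$-res-construction $\overline{s}$ of $\bE_*$ over $\bE$ and an intermediate $(\bE_1,\cO_1)$; the target is a tuple $\overline{t}\subseteq\cO_1$ with $\res(\overline{t})$ $\dcl$-independent over $\res(\bE,\cO)$ and $\bE_1=\dcl(\bE,\overline{t})$. When $\dim_{\dcl}(\bE_*/\bE)\leq\aleph_0$, enumerate $\overline{s}=(s_n)_{n<\omega}$ and build $\overline{t}$ by recursion on $n$: at each step, either adopt $s_n$ (if it lies in $\bE_1$), skip it (if its residue does not), or adopt a suitably chosen lift in $\cO_1$ of $\res(s_n)$ against the substructure already constructed. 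Countability ensures the process terminates and produces $\dcl(\bE,\overline{t})=\bE_1$, with residue independence maintained at each stage. When instead $\val(\bE_*,\cO_*)$ has no uncountable well-ordered subset, the only obstruction to carrying out an analogous transfinite recursion is an immediate extension generated by a pseudo-Cauchy sequence of length $\geq\omega_1$; such a sequence would force its difference-valuations to form an uncountable well-ordered subset of $\val(\bE_*,\cO_*)$, which is ruled out by hypothesis, so the recursion still terminates with a valid $\overline{t}$.

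For $(3)\Rightarrow(1)$, argue by contrapositive. Assume both $\dim_{\dcl}(\bE_*/\bE)\geq\aleph_1$ and $\omega_1\hookrightarrow\val(\bE_*,\cO_*)$. Select an $\omega_1$-subtuple $(s_\alpha)_{\alpha<\omega_1}\subseteq\overline{s}$ and an $\omega_1$-increasing family $(\gamma_\alpha)\subseteq\val(\bE,\cO)$, with $c_\alpha\in\bE$ of valuation $\gamma_\alpha$. Form the pseudo-Cauchy sequence $p_\alpha:=\sum_{\beta<\alpha} s_\beta c_\beta$ and let $p$ denote a pseudo-limit inside $\bE_*$ (possibly after a preliminary enlargement preserving res-constructibility). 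For a strategically chosen $\alpha_0<\omega_1$, set $\bE_1:=\dcl(\bE,p,\{s_\alpha:\alpha\ne\alpha_0\})$ and verify: $\bE\prec\bE_1\prec\bE_*$ with $s_{\alpha_0}\notin\bE_1$; $\res(\bE_1)=\res(\bE_*)$, since the missing $\res(s_{\alpha_0})$ is recoverable as $\res((p-p_{\alpha_0})/c_{\alpha_0})$; and $\bE_1$ is not res-constructible over $\bE$, because any putative res-construction $\overline{t}\subseteq\cO_1$ would need to produce $s_{\alpha_0}$ itself by a finitary $\dcl$-expression, whereas the $\omega_1$-tail of the pseudo-convergence only determines $s_{\alpha_0}$ modulo positive valuation and not the exact element.

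The main obstacle lies in the contrapositive $(3)\Rightarrow(1)$, specifically in ruling out \emph{every} candidate generating tuple $\overline{t}$. This will hinge on a precise incompatibility lemma stating that no finitary $\dcl$-expression in a countable subfamily of $(s_\alpha)_{\alpha\ne\alpha_0}$ together with $p$ can recover $s_{\alpha_0}$---essentially a statement that an $\omega_1$-length pseudo-Cauchy encoding is inaccessible to finitary definable data, which must be extracted from the residue-valuation calculus of $T_\convex$. A secondary technical subtlety is placing the pseudo-limit $p$ inside $\bE_*$ without breaking res-constructibility of the base extension; this is expected to follow from the factorization properties of res-constructible extensions established earlier in the paper.
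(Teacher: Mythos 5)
Your high-level plan---proving the cycle $(1)\Rightarrow(2)\Rightarrow(3)\Rightarrow(1)$, with $(2)\Rightarrow(3)$ trivial---matches the paper's strategy exactly, and the two nontrivial implications are indeed where the paper's Theorems~\ref{thm:mainthm} and \ref{thm:mainthm_negative} live. However, both of your nontrivial arguments contain genuine gaps.

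For $(1)\Rightarrow(2)$, you diagnose the obstruction as ``an immediate extension generated by a pseudo-Cauchy sequence of length $\geq\omega_1$'', but this is not where the difficulty lies. As the paper explains, the problem already appears at the \emph{first} limit stage: after choosing a res-construction $(s_i : i<\omega)$ in $\bE_1$ over $\bE$, it can happen that $\bE_1$ is no longer res-constructible over $\bE\langle s_i: i<\omega\rangle$, entirely independently of any $\omega_1$-length pseudo-Cauchy issue. Your step-by-step recursion (``adopt $s_n$, skip it, or adopt a lift'') gives no mechanism to prevent this, and it also does not guarantee that $\dcl(\bE,\overline t)$ actually equals $\bE_1$ rather than some proper substructure. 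The paper's fix is the notion of $\bK_*$-orthogonal and $\bK_*$-strong res-constructions (Definition~\ref{def:strong}) together with a diagonal closing-off argument (Proposition~\ref{prop:extending_strong}, Lemma~\ref{lem:strong_dir_unions}) that keeps the partial construction in a state from which it can always be continued; none of this is visible in your sketch.

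For $(3)\Rightarrow(1)$, your proposed witness $\bE_1=\dcl(\bE,p,\{s_\alpha:\alpha\ne\alpha_0\})$ relies on a pseudo-limit $p$ of the $\omega_1$-indexed sequence $p_\alpha=\sum_{\beta<\alpha}s_\beta c_\beta$. The partial sums for limit $\alpha$ are already infinite series that need not lie in $\bE_*$, and you cannot ``enlarge'' $\bE_*$ since condition $(1)$ is a property of $\bE_*$ itself; changing $\bE_*$ changes the statement being negated. More importantly, you acknowledge that the central claim---that $\bE_1$ has \emph{no} res-construction over $\bE$---rests on an ``incompatibility lemma'' you do not prove. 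This is the entire content of the implication. The paper avoids both problems by working only with the finite combinations $r+r_jx_j\in\bE_*$ (no pseudo-limit needed), and then proves non-res-constructibility by a concrete diagonalization: any putative res-construction chain $(\bF_i)$ and the defining chain $(\bF'_i)$ must share a common fixed point $\bF_\delta=\bF'_\delta$, over which $r+x_\delta r_\delta$ is neither weakly immediate (by Fact~\ref{fact:cuts_tricho}) nor residually generated (by a dimension count via Fact~\ref{fact:res_dim_inequality}), a contradiction. You would need to supply an argument of comparable precision for your construction.
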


This implies that the answer to \cite[Problem~5.12]{tressl2006pseudo} is affirmative if and only if the value group $\val(\bE_*, \cO_*)$ does not contain uncountable well orders or $\res(\bE, \cO)$ has a Cauchy-completion of countable $\dcl$-dimension (Section~\ref{sec:Tressl_problem}).

We will now spend some words about the proof of the main implication $(1) \Rightarrow (2)$ of Theorem~\ref{introthm:Main}.

It is not to hard to show that if $(\bE, \cO) \preceq (\bE_*, \cO_*)$ is res-constructible, then for any intermediate $\bE \preceq \bE_1 \preceq \bE_*$, such that $\dim_{\dcl}(\bE_1/\bE)< \aleph_0$, one has that $(\bE, \cO) \preceq (\bE_1, \cO_* \cap \bE_1)$ and $(\bE_1, \cO_* \cap \bE_1) \preceq (\bE_*, \cO_*)$ are both res-constructible extensions (Corollary~\ref{cor:finite_dim_fact}). 
This easily entails that also if $\dim_{\dcl}(\bE_1/\bE)= \aleph_0$, one has that $(\bE, \cO) \preceq (\bE_1, \cO_* \cap \bE_1)$ is res-constructible however it is also not hard to show that $(\bE_1, \cO_* \cap \bE_1) \prec (\bE_*, \cO_*)$ doesn't need to be res-constructible anymore.

We will prove $(1) \Rightarrow (2)$ of Theorem~\ref{introthm:Main} by building a $\cO_*$-res-construction $\overline{s}$ for $\bE_1$ over $\bE$ as an ordinal indexed sequence $\overline{s}\coloneqq (s_i :i <\lambda)$. By the already mentioned Corollary~\ref{cor:finite_dim_fact} about factors with finite dimension, the main problem in the transfinite construction of $\overline{s}$ is at limit steps: in fact it may happen that even if $\bE_1$ was res-constructible over $\bE$, for some $\cO_*$-res-construction, say $(s_i: i < \omega)$, it is not anymore res-constructible over $\bE \langle s_i: i< \omega\rangle$. This difficulty will be overcome by setting up the induction so that at each limit step the res-construction $(s_i: i< \omega \cdot \alpha)$ satisfies both a model theoretic orthogonality condition and a closure condition (Definition~\ref{def:strong}) relative to a given $\cO_*$-res-construction $\overline{r}$ of $\bE_*$ over $\bE$.

Subsequent to proving Theorem A, in Section ~\ref{sec:Tressl_problem}, we will turn to \cite[Problem~5.12]{tressl2006pseudo}, and give a complete answer, dependent on the properties of $\bE_*$.
We will also show that in the power-bounded setting, this dependence can be reduced to considering properties of $\bE$. Then, in Section ~\ref{sec:removing_ambient_res_construction}, we will discuss alternative characterisations of res-constructibility, and construct an example showing that res-constructibility is not a `local' property.

\subsection*{Acknowledgments}
The authors would like to thank Marcus Tressl for initially posing this problem, in a problem session in the 32${}^{\text{nd}}$ LYMoTS. We also thank the organisers. Finally, we thank Vincenzo Mantova for many helpful discussions.

\section{Preliminaries and notation}

In the following $T\supseteq \RCF$ will be some fixed complete o-minimal theory in some language $L$ containing the language of ordered rings. If $\bE \models T$ and $x$ is a tuple or a set in some $\bU \succ \bE$, we will denote the definable closure $\dcl_T(\bE \cup x)$ by $\bE \langle x \rangle$. Recall that this is still a model of $T$ because $T$ has definable Skolem functions.
Also, for a subset $S\subseteq \bE$, we will denote the convex hull of $S$ in $\bU$ by $\CH_\bU(S) \coloneqq \{x \in \bU: \exists s_1, s_2 \in S,\; s_1\le x\le s_2\}$. Finally, if $R\subseteq X \times X$ is a binary relation on a set $X$, given subsets $A,B\subseteq X$, we will write $A^{RB}$ for the set $A^{RB}\coloneqq\{x \in A: \forall b \in B,\, xRb\}$; if $B=\{b\}$ is a singleton we will write $A^{Rb}$ for $A^{R\{b\}}$. Usually $R$ will be an order relation or equality.

\subsection{Valuation-related notation.}
Given a field $\bE$ and a valuation ring $\cO$ on $\bE$, we will denote by $\co$ the unique maximal ideal of $\cO$.
The \emph{value group} will be denoted by $(\val(\bE, \cO), +) \coloneqq (\bE^{\neq 0}, \cdot)/(\cO \setminus \co)$ or by $\val_\cO \bE$.
The \emph{valuation} will be the quotient $\val_\cO : \bE^{\neq 0} \to \val(\bE, \cO)$, and will be extended to the whole $\bE$ by setting $\val(0)=\infty$. The value group will be ordered as usual by setting $\val(\bE, \cO)^{\ge 0} \coloneqq  \val_\cO(\cO^{\neq 0})$.

The associated \emph{dominance}, \emph{weak asymptotic equivalence}, \emph{strict dominance}, and \emph{strict asymptotic equivalence} on $\bE$ will be denoted respectively by:
\[\begin{aligned}
x \preceq y &\Longleftrightarrow \val_\cO(x)\ge \val_\cO(y) \Leftrightarrow x/y \in \cO;\\
x \asymp y &\Longleftrightarrow \val_\cO(x)=\val_\cO(y) \Leftrightarrow x/y \in \cO \setminus \co;\\
x \prec y &\Longleftrightarrow \val_\cO(x)>\val_\cO(y) \Leftrightarrow x/y \in \co;\\
x \sim y &\Longleftrightarrow x-y \prec x \Leftrightarrow 1-x/y \in \co.
\end{aligned}\]

The imaginary sort given by the quotient under the strict asymptotic equivalence $\sim$ will be called \emph{rv-sort} and denoted by $\rv(\bE, \cO)\coloneqq(\bE^{\neq0}, \cdot)/(1+\co)$ or $\rv_\cO(\bE)$, the quotient map will be denoted by $\rv_\cO: \bE^{\neq 0} \to \rv(\bE, \cO)$.

Finally the \emph{residue field} sort will be denoted by $\res_{\cO}(\bE)=\res(\bE,\cO)\coloneqq\cO/\co$ and the corresponding quotient map by $\res_\cO: \cO \to \res(\bE,\cO)$. We extend $\res_\cO$ to a total function $\res_\cO: \bE \to \res(\bE,\cO)$ by setting $\res_\cO$ to be $0$ outside of $\cO$.

The subscript $\cO$ will be omitted when this does not create ambiguity.

\subsection{Main facts on \texorpdfstring{$T$}{T}-convex valuations}
Recall that if $\bE \models T$, a subset $\cO \subseteq \bE$ is said to be a \emph{$T$-convex subring} when it is order-convex and closed under $\emptyset$-definable continuous total unary functions, \cite[2.7]{dries1995t}. Note that such a subset is in particular a subring because $x\mapsto 1$, $x\mapsto -x$, $x\mapsto 2x$, and $x\mapsto x^2$ are all $\emptyset$-definable unary functions and an order convex subset closed under such functions must be a subring.

\begin{fact}[{van den Dries and  Lewenberg \cite[3.6 and 3.7]{dries1995t}}]\label{fact:unary_type_extension_alternative}
    Let $\bU \succeq \bE \models T$ and let $\cO'\subseteq \bU$ and $\cO \subseteq \bE$ be $T$-convex subrings with $\cO'\cap \bE= \cO$. If $x \in \bU \setminus \bE$, then $\cO' \cap \bE \langle x \rangle \in \{\cO_x, \cO_x^-\}$ where
    \[\cO_x\coloneqq\{y \in \bE \langle x \rangle: y<\bE^{>\cO}\} \quad \text{and} \quad \cO_x^-\coloneqq\CH_{\bE \langle x \rangle}(\cO).\]
\end{fact}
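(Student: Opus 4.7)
The plan splits into two stages: a sandwich $\cO_x^- \subseteq \cO' \cap \bE\langle x\rangle \subseteq \cO_x$, and a dichotomy ruling out proper intermediate $T$-convex subrings. The sandwich uses only the convexity of $\cO'$: for the lower inclusion, any $y \in \CH_{\bE\langle x\rangle}(\cO)$ satisfies $|y| \le e$ for some $e \in \cO \subseteq \cO'$, so $y \in \cO'$; for the upper inclusion, if $y \in \cO' \cap \bE\langle x\rangle$ had $|y| \ge e$ for some $e \in \bE^{>\cO}$, convexity of $\cO'$ would place $e$ in $\cO' \cap \bE = \cO$, contradicting $e \in \bE^{>\cO}$.

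For the dichotomy, the interesting case is $\cO_x^- \subsetneq \cO_x$, which forces $x$ to realise the gap cut $|x| > \cO$ and $|x| < \bE^{>\cO}$. I would argue that the single bit ``$x \in \cO'$ or not'' determines $\cO' \cap \bE\langle x\rangle$ entirely. Given $y \in \cO_x \setminus \cO_x^-$, o-minimality and definable Skolem functions let me write $y = f(x)$ with $f$ an $\bE$-definable continuous function monotonic on an open interval containing $x$. The aim is to produce an $\emptyset$-definable continuous function $H$ and parameters $\bar u \in \cO$ with $H(x, \bar u) = y$, and symmetrically for $f^{-1}$; combined with closure of $\cO'$ under $\emptyset$-definable continuous functions (extended in the standard way to multivariable evaluations at $\cO$-tuples via the unary definition of $T$-convexity), this transfers membership in $\cO'$ between $x$ and $y$, forcing the two-way alternative.

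The main obstacle is constraining the parameters of the transformation to lie in $\cO$ rather than in all of $\bE$. The raw description of $f$ via o-minimal cell decomposition involves arbitrary $\bE$-parameters, and generically at least the upper endpoint of the cell containing $x$ must lie in $\bE^{>\cO}\setminus \cO$. The key idea is that both $x$ and $y$ realise the same gap cut over $\cO$, so the local behaviour of $f$ that is relevant to the correspondence $x \mapsto y$ should be capturable by $\cO$-data---for instance by shrinking to a sub-interval with one endpoint in $\cO$ and smoothly truncating past a suitable upper cut-off, exploiting the gap structure to absorb or replace the bad parameters without altering $f$ near $x$. Making this replacement rigorous is the technical core of the argument, and is precisely where the full strength of $T$-convexity (beyond mere order-convexity) is used.
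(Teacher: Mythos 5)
The paper does not prove this statement: it is recorded as a \emph{Fact} with the proof delegated to van den Dries and Lewenberg \cite{dries1995t}, so there is no in-paper argument to compare yours against. Judged on its own terms, your proposal splits cleanly into two parts, one of which is correct and one of which has a genuine gap.

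The sandwich $\cO_x^- \subseteq \cO' \cap \bE\langle x\rangle \subseteq \cO_x$ is correct and uses only order-convexity of $\cO'$, exactly as you say. (One small fix: if you read $\cO_x$ literally as $\{y : y < \bE^{>\cO}\}$ it contains all negative elements and is not a ring; you should symmetrize to $\{y : |y| < \bE^{>\cO}\}$, which is what your $|y|$-based argument already tacitly does.)

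The dichotomy is where the gap is, and you identify it yourself. Your plan is to take $y \in \cO_x\setminus\cO_x^-$ realizing the gap $\cO < |y| < \bE^{>\cO}$, write $y=f(x)$ for an $\bE$-definable monotone continuous $f$ on an interval $(a,b)\ni x$, and replace $f$ by an $\emptyset$-definable function applied to an $\cO$-tuple, so that closure of $\cO'$ under such evaluations carries $x\in\cO'$ to $y\in\cO'$ and vice versa. But the specific repair you float---``shrinking to a sub-interval with one endpoint in $\cO$ and smoothly truncating past a suitable upper cut-off''---does not work. By shrinking you can indeed arrange $a\in\cO$, but any truncation point $c$ at which you cap $f$ must satisfy $c>x$ for the modified function to still send $x$ to $y$, and by the very definition of the gap there is \emph{no} $c\in\cO$ with $c>x$: every element of $\bE$ above $x$ lies in $\bE^{>\cO}$. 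So the cut-off itself is necessarily a bad parameter, and the truncation idea leaves you where you started. A separate, smaller inaccuracy: you say the strict inclusion $\cO_x^-\subsetneq\cO_x$ ``forces $x$ to realise the gap cut''; in fact it only forces \emph{some element of} $\bE\langle x\rangle$ to realise it, and one should then replace $x$ by such an element (harmless, since they generate the same model, but worth stating).

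The actual argument in \cite{dries1995t} proceeds more structurally, via the equivalence (recorded in this paper as Fact~\ref{fact:res_iso_max}) between $T$-convex subrings and convex hulls of tame elementary residue sections $\bK\preceq_\tame\bE$, together with Marker--Steinhorn definability of types over $\bK$. That route sidesteps the parameter-chasing entirely: rather than trying to force the cell-decomposition data for $f$ into $\cO$, one works with the residue section and uses that the relevant one-variable type is definable over $\bK\subseteq\cO$. I would recommend either completing the dichotomy along those lines or, as the paper does, citing the source directly.
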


\begin{definition}
    Let $L_\convex\coloneqq L \cup \{\cO\}$ be the language $L$ expanded by a unary predicate $\cO$ and let $T_\convex^-$ be the common theory in $L_\convex$ of all expansions $(\bE, \cO)$ of models of $T$ where $\cO$ is interpreted as a $T$-convex subring. Let $T_\convex$ be the theory $T_\convex^- \cup \{\exists x \notin \cO\}$.
\end{definition}

\begin{fact}[{van den Dries and  Lewenberg \cite[3.10 and 3.14]{dries1995t}}]\label{fact:T_convex_is_complete}
    The theory $T_\convex$ is complete and weakly o-minimal. Moreover if $T$ is universally axiomatized and eliminates quantifiers, then $T_\convex$ eliminates quantifiers. 
\end{fact}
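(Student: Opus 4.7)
The plan is to tackle the three assertions together, with quantifier elimination (under the extra hypothesis on $T$) as the main technical step, from which completeness and weak o-minimality follow essentially by general principles. For the first two assertions in the absence of QE for $T$, we may pass to a definitional expansion that adds Skolem functions for the o-minimal theory $T$; this is conservative and preserves the conclusions, so we may assume throughout that $T$ is universally axiomatized and eliminates quantifiers.

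For quantifier elimination of $T_\convex$, I would use the partial-isomorphism criterion. Take two $\kappa$-saturated models $(\bE_1, \cO_1), (\bE_2, \cO_2)\models T_\convex$ for $\kappa$ large, together with an $L_\convex$-isomorphism $\iota$ between small substructures, and aim to extend $\iota$ by one element. So let $(\bE_0, \cO_0) \preceq (\bE_1, \cO_1)$ be small and $a \in \bE_1 \setminus \bE_0$; I need $a' \in \bE_2$ realising the $L$-type of $a$ over $\iota(\bE_0)$ such that the pullback of $\cO_2$ to $\iota(\bE_0)\langle a'\rangle$ matches, under the induced $L$-isomorphism, the pullback of $\cO_1$ to $\bE_0\langle a\rangle$. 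By Fact~\ref{fact:unary_type_extension_alternative} the pullback on either side is one of the two explicit $T$-convex subrings $\cO_a$ or $\cO_a^-$, and the dichotomy between them is detected by whether $a$ lies in the convex hull $\CH_{\bE_0\langle a\rangle}(\cO_0)$, a condition that is first-order in $L_\convex$. Using saturation of $(\bE_2, \cO_2)$, pick $a'$ realising the correct $L$-type and satisfying the correct convex-hull condition with respect to $\iota(\cO_0)$; by Fact~\ref{fact:unary_type_extension_alternative} the two pullbacks then coincide.

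Given QE, completeness follows because the prime model of $T$ embeds (with trivial valuation) into every model of $T_\convex^-$, and the axiom $\exists x \notin \cO$ selects a single complete extension. For weak o-minimality, every quantifier-free $L_\convex$-formula in one variable is a Boolean combination of $L$-formulas and formulas of the form $\cO(t(x, \bar{y}))$ for $L$-terms $t$; the former define finite unions of intervals by o-minimality of $T$, and the latter define finite unions of convex sets because $t(\cdot, \bar{a})$ is piecewise monotone by o-minimality and $\cO$ is convex, so its preimage under a monotone piece is convex.

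The main obstacle is the one-element extension step for QE, and more specifically the observation that the dichotomy in Fact~\ref{fact:unary_type_extension_alternative} between the two possible $T$-convex pullbacks is captured by a single $L_\convex$-formula in the element being added; once this is in place, saturation concludes the argument, but it is the essential point at which the $L_\convex$-structure, rather than merely the underlying $L$-structure, begins to control the back-and-forth.
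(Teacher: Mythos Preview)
The paper does not prove this statement; it is recorded as a Fact with citation to van den Dries--Lewenberg. Your outline follows the standard argument from that reference (back-and-forth for QE using Fact~\ref{fact:unary_type_extension_alternative}, then completeness and weak o-minimality as consequences), so there is no alternative paper proof to compare against.

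There is, however, a genuine imprecision at the key step. You assert that the dichotomy between $\cO_a$ and $\cO_a^-$ is detected by whether $a$ lies in $\CH_{\bE_0\langle a\rangle}(\cO_0)$. That condition depends only on the $L$-cut of $a$ over $\bE_0$ and does not separate the two cases: if $a$ itself lies in the gap $\cO_0 < a < \bE_0^{>\cO_0}$, then $a\notin\CH_{\bE_0\langle a\rangle}(\cO_0)$ regardless of whether $\cO_1\cap\bE_0\langle a\rangle$ equals $\cO_a$ or $\cO_a^-$. The correct discriminant is whether some (equivalently any) gap element $y\in\bE_0\langle a\rangle$ with $\cO_0<|y|<\bE_0^{>\cO_0}$ belongs to $\cO_1$; after replacing $a$ by such a $y$ this is the single $L_\convex$-formula $\cO(x)$. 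With that correction the extension step works, but you should also argue finite satisfiability of the resulting partial type in $(\bE_2,\cO_2)$: one must check that the gap above $\iota(\cO_0)$ in $\bE_2$ meets both $\cO_2$ and its complement, which uses that $\cO_2$ is a proper $T$-convex subring (and requires a word in the degenerate case $\iota(\bE_0)\subseteq\cO_2$). Your deductions of completeness and weak o-minimality from QE are fine.
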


\begin{remark}
    Notice that if $T^{\defin}$ is the expansion of $T$ to the language $L^{\defin}$ with a symbol $S_f(\ox)$ for every $T$-definable function $f(\ox)$, by adding axioms that $S_f$ equals $f$ for all $f$, then $T$ is model complete (in fact it is universally axiomatized and eliminates quantifiers). In particular Fact~\ref{fact:T_convex_is_complete} implies that if $(\bE, \cO)\models T_\convex$ and $\bE \succeq \bE'\models T$ is such that $\bE' \not \subseteq \cO$, then $(\bE, \cO) \succeq (\bE', \cO \cap \bE')$.
\end{remark}

\begin{definition}[{\cite[p.187]{marker1994definable}}, {\cite[p.~76 and (1.12)]{dries1997t}}]
    An elementary substructure $\bK \preceq \bE\models T$ is said to be \emph{tame} or \emph{definably Dedekind-complete} (denoted $\bK \preceq_\tame \bE$) if for all $\bE$-definable $S\subseteq \bE$, if $S\cap \bK$ is bounded in $\bK$, then it has a supremum in $\bK$. If $\bK\preceq_\tame \bE$, then there is a unique function $\std_\bK: \CH_\bE(\bK) \to \bK$ with the property that $\std_\bK(x)-x < \bK^{>0}$ for all $x \in \CH_\bE(\bK)$.
\end{definition}

\begin{fact}[{Marker and Steinhorn \cite[Thm.~2.1]{marker1994definable}, see also \cite{andujarguerrero2025marker}
}]
    $\bK \preceq_\tame \bE$ if and only if whenever $c$ is a tuple of $\bE$ then $\tp(c/\bK)$ is definable.
\end{fact}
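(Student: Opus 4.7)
The plan is to handle the two directions separately, using the uniform cell decomposition available in $T$.

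For the $(\Leftarrow)$ direction, assume every type of a tuple of $\bE$ over $\bK$ is definable, and let $S = \phi(\bE, \bar{b})$ be an $\bE$-definable subset of $\bE$ with $S \cap \bK$ bounded in $\bK$. Definability of $\tp(\bar{b}/\bK)$ produces a $\bK$-formula $\theta(x)$ such that $\theta(\bK) = S \cap \bK$; the o-minimality of $\bK$ then gives a supremum in $\bK$.

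For the harder $(\Rightarrow)$ direction, assume $\bK \preceq_\tame \bE$, so that a standard-part map $\std_\bK\colon \CH_\bE(\bK) \to \bK$ exists. I would first treat a single element $c \in \bE$: by uniform cell decomposition, for each formula $\phi(x, \bar{y})$ there are $\emptyset$-definable continuous functions $f_1, \ldots, f_N$ such that whether $\phi(x, \bar{b})$ holds is determined by the order relations between $x$ and $f_1(\bar{b}), \ldots, f_N(\bar{b})$. It then suffices to describe, for each such $f$ and each $\lhd \in \{<, =, >\}$, the set $\{\bar{b} \in \bK^n : f(\bar{b}) \lhd c\}$ by a $\bK$-formula. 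If $c \in \bK$ this is immediate; if $c > \bK$ or $c < \bK$ the set is either $\bK^n$ or empty; otherwise $d := \std_\bK(c) \in \bK$, and $f(\bar{b}) < c$ coincides either with $f(\bar{b}) < d$ or with $f(\bar{b}) \leq d$ depending on whether $c$ sits just below or just above $d$, both of which are $\bK$-definable. The cases of $=$ and $>$ are analogous.

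For a tuple $\bar{c} = (c_1, \ldots, c_m)$, I would proceed by induction on $m$. The inductive step requires showing that if $\tp(c_1, \ldots, c_{m-1}/\bK)$ is definable and $\bK \preceq_\tame \bE$, then $\bK\langle c_1, \ldots, c_{m-1}\rangle \preceq_\tame \bE$ as well, so that the single-element case can be applied to $c_m$ over this enlarged base; combining with definability of $\tp(c_1, \ldots, c_{m-1}/\bK)$ then recovers definability of $\tp(\bar{c}/\bK)$. This preservation of tameness when adjoining one realised type is the main technical obstacle: one verifies it by using the already-established definability over $\bK$ to rewrite any $\bE$-definable subset whose intersection with $\bK\langle c_1, \ldots, c_{m-1}\rangle$ is bounded as (the intersection with this intermediate structure of) a subset controlled by $\bK$-definable data, thereby reducing the existence of the supremum to the tameness of $\bK$ itself.
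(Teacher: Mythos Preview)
The paper does not supply a proof of this fact; it is stated as a cited result of Marker and Steinhorn, with no argument given. So there is nothing in the paper to compare your proposal against.

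As an independent assessment: your $(\Leftarrow)$ direction and the single-element case of $(\Rightarrow)$ are essentially correct and follow the standard line. The genuine difficulty, as you correctly identify, is the inductive step showing that $\bK\langle c_1,\ldots,c_{m-1}\rangle \preceq_\tame \bE$. Your sketch of this step is too vague to be a proof: the phrase ``rewrite any $\bE$-definable subset \ldots\ as a subset controlled by $\bK$-definable data'' does not explain how to handle the parameters from $\bE$ defining the set in question, which may lie outside $\bK\langle c_1,\ldots,c_{m-1}\rangle$ entirely, and the inductive hypothesis on $\tp(c_1,\ldots,c_{m-1}/\bK)$ gives you no direct leverage on those external parameters. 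This is precisely where the work in the original Marker--Steinhorn argument resides, and a complete proof requires a considerably more careful analysis than what you have outlined.
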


It is not hard to see that given a $T$-convex subring $\cO \subseteq \bE\models T$, there is $\bK \preceq \bE$ maximal in $\{\bK' \preceq \bE: \bK' \subseteq \cO\}$. 

\begin{fact}[{van den Dries and Lewenberg {\cite[2.12]{dries1995t}}, van den Dries {\cite[Thm.~A]{dries1997t}}\nocite{dries1998corrections}}]\label{fact:res_iso_max}
	If $(\bE, \cO) \models T_\convex^-$, then the following are equivalent for $\bK \preceq \bE$:
    \begin{enumerate}
        \item $\bK$ is maximal in $\{\bK' \preceq \bE: \bK' \subseteq \cO\}$;
        \item $\bK\preceq_\tame \bE$ and $\CH(\bK)=\cO$;
        \item $\co \cap \bK = \{0\}$ and $\std_\bK: \cO \to \bK$ induces an isomorphism between the induced structure on the imaginary sort $\res(\bE, \cO)$ and $\bK$.
    \end{enumerate}
    \begin{proof}
        See \cite[2.12]{dries1995t} for the equivalence of (1) and (2). The equivalence of (2) and (3) is instead in \cite[1.13]{dries1997t}.
    \end{proof}
\end{fact}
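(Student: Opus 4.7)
My plan is to prove the cycle $(2) \Rightarrow (3) \Rightarrow (1) \Rightarrow (2)$, since $(2) \Leftrightarrow (3)$ is essentially a translation through the standard-part map, while the main effort lies in $(1) \Rightarrow (2)$.

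For $(2) \Rightarrow (3)$, tameness gives the standard-part map $\std_\bK \colon \cO = \CH(\bK) \to \bK$, which fixes $\bK$ pointwise. If $k \in \bK \cap \co$, then $\std_\bK(k)$ equals both $k$ and $0$, forcing $k = 0$. Since $\std_\bK(x) = \std_\bK(y)$ iff $x - y \in \co$, the map $\std_\bK$ factors through $\res_\cO$ to give a bijection $\overline{\std}_\bK \colon \res(\bE, \cO) \to \bK$; this respects the full $T$-structure because $\std_\bK$ commutes modulo $\co$ with every $\emptyset$-definable continuous function. For $(3) \Rightarrow (1)$, suppose $\bK \preceq \bK'' \preceq \bE$ with $\bK'' \subseteq \cO$. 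Then $\co \cap \bK'' = \{0\}$, since otherwise a nonzero $\epsilon \in \co \cap \bK''$ would yield $\epsilon^{-1} \in \bK'' \setminus \cO$. Given $b \in \bK''$, surjectivity of $\overline{\std}_\bK$ produces $k \in \bK$ with $b - k \in \co \cap \bK'' = \{0\}$, so $b = k \in \bK$; hence $\bK = \bK''$.

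For $(1) \Rightarrow (2)$, since $\bK \subseteq \cO$ we have $\bK^{>\cO} = \emptyset$, so Fact~\ref{fact:unary_type_extension_alternative} applied to $\bK \preceq \bE$ (with convex subrings $\bK$ on $\bK$ and $\cO$ on $\bE$) tells us that for each $x \in \bE \setminus \bK$ the intersection $\cO \cap \bK\langle x \rangle$ equals either $\bK\langle x \rangle$ or $\CH_{\bK\langle x \rangle}(\bK)$; maximality rules out the first option, as it would yield a proper $\cO$-contained elementary extension of $\bK$. Specialising to $x \in \cO$ with $x > \bK$ would force the first option ($x \notin \CH_{\bK\langle x \rangle}(\bK)$), so no such $x$ exists and $\cO = \CH(\bK)$.

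The hard part will be deriving tameness of $\bK$ in $\bE$. If $\bK$ were not tame, some $c \in \bE$ would realise a non-definable cut over $\bK$; as unbounded cuts are definable, $c \in \CH(\bK) = \cO$, and moreover $c$ cannot be infinitesimally close to any element of $\bK$ (else its cut would coincide with a definable jump). Hence for every $a \in \bK$ there is $\delta_a \in \bK^{>0}$ with $|c - a| > \delta_a$. For any $\bK$-definable continuous function $f = g(\overline{k}, \cdot)$ with $\overline{k} \in \bK$, the finitely many discontinuities of $g(\overline{k}, \cdot)$ lie in $\bK$, and $c$ avoids them by a $\bK$-positive margin $\delta \in \bK^{>0}$; o-minimality together with continuity then bounds $|f(c)|$ by some $M \in \bK$, giving $f(c) \in \CH(\bK)$. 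Hence $\bK\langle c\rangle \subseteq \CH(\bK) = \cO$, contradicting maximality of $\bK$. The bounded-spread o-minimality argument converting a non-definable cut into a $\CH(\bK)$-contained extension is the core technical content of this implication.
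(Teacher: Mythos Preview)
The paper does not actually prove this fact: its ``proof'' consists solely of pointers to \cite[2.12]{dries1995t} for $(1)\Leftrightarrow(2)$ and to \cite[1.13]{dries1997t} for $(2)\Leftrightarrow(3)$. Your proposal, by contrast, supplies a self-contained cycle $(2)\Rightarrow(3)\Rightarrow(1)\Rightarrow(2)$, which is essentially a reconstruction of the arguments in those cited sources rather than a genuinely different route.

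Your argument is correct in outline. Two comments on where the sketch is thinnest. In $(2)\Rightarrow(3)$, the assertion that $\std_\bK$ ``commutes modulo $\co$ with every $\emptyset$-definable continuous function'' is the substantive step: it follows from an $\epsilon$--$\delta$ argument using $\bK\preceq\bE$ (for each $\epsilon\in\bK^{>0}$ continuity at $\std_\bK(x)$ produces $\delta\in\bK^{>0}$, and $|x-\std_\bK(x)|<\delta$ since $x-\std_\bK(x)\in\co$), but promoting this to a full $L$-isomorphism with the \emph{induced} structure on $\res(\bE,\cO)$ still requires knowing that this induced structure is well-defined and a model of $T$, which is precisely what \cite[1.13]{dries1997t} establishes. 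In $(1)\Rightarrow(2)$, your use of Fact~\ref{fact:unary_type_extension_alternative} with the trivial $T$-convex ring $\bK\subseteq\bK$ to get $\cO=\CH(\bK)$ is clean, and the tameness step---finding a $\bK$-definable closed interval $[a,b]\ni c$ on which $f$ is continuous, then bounding $|f(c)|$ by the $\bK$-definable maximum of $|f|$ on $[a,b]$---is exactly the argument of \cite[2.12]{dries1995t}. So what you have written is faithful to the original proofs; it simply makes explicit what the paper leaves to the literature.
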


\begin{definition}
    We call $\bK \prec \bE$ an \emph{elementary residue-section} for $(\bE, \cO)$ when the equivalent conditions of Fact~\ref{fact:res_iso_max} hold.
\end{definition}

\begin{fact}[{van den Dries and Lewenberg in \cite[5.3]{dries1995t}}]\label{fact:res_dim_inequality}
    Suppose $(\bE,\cO)\prec (\bE \langle x \rangle, \cO')\models T_\convex$, then $\dim_{\dcl}(\res(\bE\langle x \rangle, \cO')/\res(\bE, \cO))\le 1$.
\end{fact}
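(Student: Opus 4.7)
My plan is to pass to elementary residue sections of the two valued fields and prove the statement at the level of the ambient $T$-models. After possibly enlarging to a sufficiently saturated overmodel, Fact~\ref{fact:res_iso_max} lets me fix an elementary residue section $\bK \preceq \bE$ of $(\bE, \cO)$ and, by a Zorn argument in the poset $\{\bK'' \preceq \bE\langle x \rangle : \bK \subseteq \bK'' \subseteq \cO'\}$, extend it to an elementary residue section $\bK' \preceq \bE\langle x \rangle$ of $(\bE\langle x \rangle, \cO')$. The same fact identifies the inclusion $\res(\bE, \cO) \hookrightarrow \res(\bE\langle x \rangle, \cO')$ with $\bK \hookrightarrow \bK'$ (the two standard-part maps agree on $\cO$ by uniqueness), so it suffices to prove $\dim_{\dcl}(\bK'/\bK) \le 1$.

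Maximality of $\bK$ among $T$-submodels of $\bE$ contained in $\cO$ forces $\bK' \cap \bE = \bK$. If $\bK' = \bK$ the conclusion is trivial; otherwise fix $z \in \bK' \setminus \bK$. Since $z \notin \bE$ but $z \in \bE\langle x \rangle = \dcl(\bE \cup \{x\})$, o-minimal exchange yields $\bE\langle z \rangle = \bE\langle x \rangle$. Hence every $w \in \bK'$ admits a presentation $w = g(\bar e, z)$ with $\bar e$ a tuple in $\bE$ and $g$ an $\emptyset$-definable function. Exhibiting $w \in \bK\langle z \rangle$ for each such $w$ will give $\bK' \subseteq \bK\langle z \rangle$, hence the desired dimension bound.

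The main obstacle is eliminating the $\bE$-parameters $\bar e$ in favour of parameters from $\bK$. I plan to first use $T$-convexity via the $\emptyset$-definable inversion $t \mapsto 1/t$ to replace each coordinate of $\bar e$ lying outside $\cO$ by its reciprocal (which then lies in $\co \subseteq \cO$), thereby reducing to $\bar e \in \cO^{|e|}$ after modifying $g$. Then, by $\emptyset$-definable cell decomposition, I may assume $g$ is continuous at $(\bar a, z)$, where $\bar a := \std_\bK(\bar e) \in \bK^{|e|}$. A definable modulus of continuity for $g$ at $(\bar a, z)$ can be chosen inside $\bK'$ (since $\bar a, z \in \bK'$ and $g$ is $\emptyset$-definable), and as $\bK' \cap \co' = \{0\}$ its values dominate all infinitesimals; combined with $\bar e - \bar a \in \co^{|e|}$ this yields $g(\bar a, z) - w = g(\bar a, z) - g(\bar e, z) \in \co'$. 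But $g(\bar a, z) \in \bK\langle z \rangle \subseteq \bK'$ and $w \in \bK'$, so $w = g(\bar a, z) \in \bK\langle z \rangle$, as wanted. The delicate point is arranging that $g$ is genuinely continuous at $(\bar a, z)$ in the ambient topology, not merely along a lower-dimensional cell; this is where standard o-minimal germ machinery is needed, and may require treating the two possibilities $\cO' \in \{\cO_x, \cO_x^-\}$ of Fact~\ref{fact:unary_type_extension_alternative} separately.
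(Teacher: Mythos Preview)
The paper does not give its own proof of this statement: it is recorded as a \emph{Fact} and attributed to \cite[(5.3)]{dries1995t}, with no argument supplied. So there is no ``paper's own proof'' to compare against; any comparison must be with the original van den Dries--Lewenberg argument.

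Your outline follows that original strategy closely: lift to elementary residue sections $\bK \subseteq \bK'$, use exchange to get $\bE\langle z\rangle = \bE\langle x\rangle$ for any $z \in \bK'\setminus\bK$, and then show $\bK' = \bK\langle z\rangle$. The part you flag as delicate is exactly where the work lies. Your proposed fix --- reduce to $\bar e \in \cO$, pass to $\bar a := \std_\bK(\bar e)$, and use a definable modulus of continuity at $(\bar a, z)$ --- is not quite complete as stated: after an $\emptyset$-definable cell decomposition making $g$ continuous on each cell, the point $(\bar e, z)$ lies in some cell $C$, but there is no reason $(\bar a, z)$ lies in $C$ (it may sit on a lower-dimensional boundary cell where $g$ is a different function or undefined), so you cannot simply invoke a modulus of continuity for $g$ at $(\bar a, z)$.

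In \cite{dries1995t} this step is handled not by a direct continuity argument but via their structural results on $T$-convex hulls of elementary substructures: roughly, one shows that the $T$-convex hull of $\bK\langle z\rangle$ in $\bE\langle z\rangle$ already equals $\cO'$, so $\bK\langle z\rangle$ is maximal among submodels contained in $\cO'$ and hence coincides with $\bK'$. If you want to salvage the continuity route, one way is to first use tameness of $\bK'$ in $\bE\langle x\rangle$ (which you have, since $\bK'$ is a residue section) to argue that $\std_{\bK'}$ commutes with $\emptyset$-definable functions restricted to cells whose closure is still a manifold on which $g$ extends continuously; but at that point you are essentially reproving the machinery the original reference sets up. The two cases from Fact~\ref{fact:unary_type_extension_alternative} are not really needed for this.
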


\subsection{Cuts and \texorpdfstring{$T$}{T}-convex valuations}

Let $(\bU, \cO)\models T_\convex$ and $\bE \prec \bU \models T$.

\begin{definition}
    We call an element $x\in \bU \setminus \bE$:
    \begin{enumerate}
        \item \emph{weakly $\cO$-immediate} (or $\cO$-\emph{wim}) over $\bE$ if $\val_{\cO}(x-\bE)$ has no maximum;
        \item $\cO$-\emph{residual} over $\bE$ if $x \in \cO$ and $\res_{\cO}(x)\notin \res_\cO(\bE)$;
        \item $\cO$-\emph{purely valuational} over $\bE$ if $\bE \langle x \rangle \setminus \bE = \bE + M$ for some $M\subseteq \bE \langle x \rangle$ such that $\val_{\cO}(M) \cap \val_{\cO}(\bE)=\emptyset$. 
    \end{enumerate}
    
    Furthermore we will say that $x$ is \emph{weakly $\cO$-immediately generated} (or \emph{$\cO$-wim generated}) over $\bE$ if $\bE\langle x \rangle = \bE \langle y \rangle$ for some weakly $\cO$-immediate $x$. Similarly we will say that $x$ is \emph{$\cO$-residually generated} over $\bE$ if $\bE\langle x \rangle = \bE \langle y \rangle$ for some residual $y$.
\end{definition}

\begin{remark}
	An element $x \in \bU\setminus \bE$ is \emph{weakly $\cO$-immediate} over $\bE$, if and only if for every $c \in \bE$, $\rv_{\cO}(x-c) \in \rv_{\cO} (\bE)$.
\end{remark}

\begin{fact}[see Thm.~3.10 in \cite{freni2024t}]\label{fact:cuts_tricho}
    For every $x \in \bU \setminus \bE$ only one of the following holds:
    \begin{enumerate}
        \item $x$ is $\cO$-wim-generated;
        \item $x$ is $\cO$-residually generated;
        \item $x$ is $\cO$-purely valuational.
    \end{enumerate}
    In particular if $x$ is $\cO$-residually generated over $\bE$, then $\bE\langle x \rangle$ does not contain elements that are $\cO$-wim over $\bE$.
\end{fact}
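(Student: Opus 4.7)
The plan is to distinguish cases according to whether the set $S:=\val(x-\bE)\subseteq\val(\bU)\cup\{+\infty\}$ attains a maximum. If not, $x$ itself is $\cO$-wim by definition, placing $x$ in case~(1). Otherwise fix $c_0\in\bE$ with $\val(x-c_0)=\max S=:\gamma$ and set $y:=x-c_0$, so that $\bE\langle x\rangle=\bE\langle y\rangle$ and $\val(y-c)\le\gamma$ for every $c\in\bE$. If $\gamma\in\val(\bE)$, pick $d\in\bE^{\neq 0}$ with $\val(d)=\gamma$ and let $z:=y/d\in\cO\setminus\co$. Then $\res(z)\notin\res(\bE)$: otherwise $\res(z)=\res(e)$ for some $e\in\cO\cap\bE$, whence $\val(z-e)>0$ gives $\val(y-de)>\gamma$, contradicting the maximality of $\gamma$. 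Hence $z$ is residual and $\bE\langle x\rangle=\bE\langle z\rangle$, landing $x$ in case~(2).

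The subcase $\gamma\notin\val(\bE)$, where we claim $x$ is purely valuational, is the main technical obstacle. We must show that for every $w\in\bE\langle x\rangle\setminus\bE$ there is $e\in\bE$ with $\val(w-e)\notin\val(\bE)$; taking $M$ to be the set of such differences then witnesses the definition of purely valuational. Writing $w=\phi(x)$ for an $\bE$-definable continuous unary $\phi$ that is monotone on a neighbourhood of $x$ by o-minimality, the idea is to leverage the maximality of $\gamma$ in $\val(x-\bE)$ together with a weakly o-minimal cell decomposition analysis (essentially the content of \cite{freni2024t}) to compute $\val(\phi(x)-\phi(c))$ for $c$ near $c_0$, and conclude that either $\phi(x)\in\bE$, or $\val(\phi(x)-\bE)$ attains its maximum at a value not in $\val(\bE)$.

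The three cases are mutually exclusive for a fixed generator because they correspond to disjoint configurations of $\val(\cdot-\bE)$. For the stronger ``In particular'' clause, observe that since $\dim_{\dcl}(\bE\langle x\rangle/\bE)=1$, every $w\in\bE\langle x\rangle\setminus\bE$ satisfies $\bE\langle w\rangle=\bE\langle x\rangle$, so it suffices to show that no single extension can simultaneously be residually generated and $\cO$-wim-generated. This reduces to combining Fact~\ref{fact:res_dim_inequality} (a residual generator forces $\res(\bE\langle x\rangle)\supsetneq\res(\bE)$) with the complementary fact, also flowing from the same o-minimal cuts analysis, that a wim-generated extension is immediate and so preserves the residue field.
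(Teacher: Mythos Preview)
The paper does not prove this statement; it is recorded as a Fact with a citation to \cite{freni2024t}, so there is no in-paper proof to compare against. That said, your sketch has a genuine gap in the mutual-exclusivity part, specifically in the incompatibility of (1) and (2), which is exactly the content of the ``In particular'' clause.

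Your three-way case split on $\val(x-\bE)$ correctly shows that \emph{at least one} of (1)--(3) holds, and the incompatibility of (3) with (1) and with (2) is straightforward from the definition of purely valuational (every $w\in\bE\langle x\rangle\setminus\bE$ has some translate with valuation outside $\val(\bE)$, which a wim or a residual element cannot have). The problem is your treatment of (1) versus (2). You write that the three cases ``are mutually exclusive for a fixed generator because they correspond to disjoint configurations of $\val(\cdot-\bE)$'', but the labels \emph{wim-generated} and \emph{residually generated} are properties of the extension $\bE\langle x\rangle$, not of a chosen generator; different generators of the same one-dimensional extension can \emph{a priori} sit in different configurations of $\val(\cdot-\bE)$, and ruling this out is precisely the non-trivial content.

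Your attempted fix for the ``In particular'' clause is circular: you invoke the ``complementary fact\ldots that a wim-generated extension is immediate and so preserves the residue field''. But ``$x$ wim-generated $\Rightarrow$ $\res(\bE\langle x\rangle)=\res(\bE)$'' is equivalent (via the exchange property and Fact~\ref{fact:res_dim_inequality}) to ``$x$ wim-generated $\Rightarrow$ $x$ not residually generated'', which is the very exclusivity you are trying to establish. Note also that ``weakly immediate'' is strictly weaker than ``immediate'': a wim element need not generate an immediate extension in the valuation-theoretic sense, and in the non-power-bounded setting this distinction matters (compare Fact~\ref{fact:rv-property}, which is stated only for power-bounded $T$). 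The actual argument that a residual generator forbids wim elements requires the o-minimal cuts analysis you allude to---showing, roughly, that if $z$ is residual over $\bE$ then for every $\bE$-definable $f$ the image $f(z)$ again has $\val(f(z)-\bE)$ attaining a maximum---and this is the substance of the cited theorem, not a consequence of the case split you have written down.
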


Recall from \cite[Sec.~1]{miller1993growth} that if $\bE$ is an o-minimal expansion of a field, then a \emph{power-function} of $\bE$ is a definable endomorphism of $(\bE^{>0}, \cdot)$; by o-minimality a \emph{power-function} $\theta: \bE^{>0} \to \bE^{>0}$ is always monotone and differentiable, its \emph{exponent} $\lambda$ is then defined as $\theta'(1)$ and $\theta(x)$ is usually written as $x^{\lambda}$.

Also recall that the exponents of (the power-functions of) $\bE$ always form a subfield of $\bE$ and recall from \cite[Thm.~3.6]{miller1993growth} that for every complete o-minimal theory $T$ exactly one of the following holds:
\begin{itemize}
    \item $T$ is \emph{power bounded}: for every model $\bE\models T$, all power functions of $\bE$ are $\emptyset$-definable and all $\bE$-definable total unary functions are eventually bounded by some power-function;
    \item $T$ is \emph{exponential}: for every model $\bE \models T$, there is a $T$-definable ordered group isomorphism $\exp: (\bE, +, <)\to (\bE^{>0}, \cdot, <)$ such that $\exp'(0)=1$.
\end{itemize}

When $T$ is power-bounded a stronger version of Fact~\ref{fact:cuts_tricho} holds: this is known as the \emph{residue-valuation property} or \emph{rv-property} of power-bounded o-minimal theories.

\begin{fact}[{Tyne \cite[Thms.~12.10 and 13.4]{tyne2003t}, van den Dries and Speissegger \cite[9.2 and 10.1]{dries2000field}}]\label{fact:rv-property}
    If $T$ is power bounded with field of exponents $\Lambda$, then for every $x \in \bU \setminus \bE$, only one of the following holds
    \begin{enumerate}
        \item $x$ is $\cO$-wim;
        \item there are $c,d \in \bE^{\neq 0}$ such that $\res(d(x-c)) \notin \res_\cO(\bE)$;
        \item there is $c \in \bE$ such that $\val_\cO(\bE \langle x \rangle) = \val_\cO(\bE) + \Lambda \cdot \val_\cO(x-c)$.
    \end{enumerate}
\end{fact}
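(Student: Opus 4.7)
The plan is to upgrade the trichotomy of Fact~\ref{fact:cuts_tricho} by exploiting the asymptotic control that power-boundedness provides. I would first verify mutual exclusivity, reading case (3) with the (implicit) requirement that $\val(x-c) \notin \val(\bE)$, since otherwise (3) collapses to $\val(\bE \langle x \rangle) = \val(\bE)$, which is automatic whenever the value group is preserved. Exclusivity then follows by ultrametric bookkeeping: in case (2), the condition $\res(d(x-c)) \notin \res(\bE)$ forces $\val(x-c') = -\val(d)$ for every $c' \in \bE$, so $\val(x-\bE)$ has a maximum and $x$ cannot be wim; in case (3) with $\val(x-c) \notin \val(\bE)$, the same ultrametric computation shows $c$ maximizes $\val(x-\bE)$, again excluding (1); and (3) enlarges the value group whereas (2) does not.

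For existence, assume $x \in \bU \setminus \bE$ is not wim. Pick $c \in \bE$ maximizing $\val(x-\bE)$ and set $y := x - c$, so $\val(y-d) \le \val(y)$ for every $d \in \bE$. If $\val(y) \in \val(\bE)$, choose $d \in \bE^{\neq 0}$ with $\val(d) = -\val(y)$; then $dy \in \cO \setminus \co$, and if $\res(dy) = \res(e)$ for some $e \in \bE$ one gets $\val(dy - e) > 0$, hence $\val(y - e/d) > \val(y)$, contradicting maximality. Thus $\res(d(x-c)) \notin \res(\bE)$ and (2) holds.

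The remaining case $\val(y) \notin \val(\bE)$ is where power-boundedness is essential and constitutes the main obstacle. The inclusion $\val(\bE) + \Lambda \cdot \val(y) \subseteq \val(\bE\langle y\rangle)$ follows by applying the $\emptyset$-definable power functions $z \mapsto |z|^\lambda$ to $|y|$. The reverse inclusion is the serious content: one must show that every $z \in \bE\langle y\rangle^{\neq 0}$, which in the o-minimal setting has the form $z = f(y)$ for some unary $f$ definable over $\bE$, satisfies $\val(f(y)) \in \val(\bE) + \Lambda \cdot \val(y)$.

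This is the substance of the preparation theorem of van den Dries--Speissegger \cite{dries2000field} and Tyne's valuation property \cite{tyne2003t}: in the power-bounded regime, $f$ admits on a suitable definable partition a decomposition of the form $f(z) = a \cdot |z - c_0|^\lambda \cdot (1 + u(z))$ with $a \in \bE$, $c_0 \in \bE$, $\lambda \in \Lambda$, and $u(z)$ infinitesimal on the relevant piece. Because $\val(y) \notin \val(\bE)$ and $c$ was chosen to maximize $\val(x-\bE)$, the cut defined by $y$ lies ``away from $\bE$'' in the sense that no $c_0 \in \bE$ can be close to $y$; consequently the ``$c_0$'' correction either gives $\val(y - c_0) = \val(y)$ or is absorbed by the parameter $a$, and one reads off $\val(f(y)) = \val(a) + \lambda \val(y)$. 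Selecting the piece of the partition containing $y$ (possible by weak o-minimality of $T_\convex$ applied to the defining partition) yields the required containment and completes case (3). The genuine difficulty, and the reason this fact is credited to \cite{tyne2003t,dries2000field}, lies entirely in the preparation/valuation-property input; the rest is elementary valuation-theoretic manipulation.
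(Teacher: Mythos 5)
The paper does not give its own proof of this Fact; it is cited from Tyne \cite{tyne2003t} and van den Dries--Speissegger \cite{dries2000field}, so there is no internal argument to compare against. Your sketch is a sensible reduction to those sources, and you are right that the heavy lifting is the preparation theorem / valuation property; a few remarks on the surrounding bookkeeping are in order.

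Your observation that clause (3) must be read with the implicit side condition $\val_\cO(x-c)\notin\val_\cO(\bE)$ is correct and necessary: for a wim $x$ in the power-bounded setting, $\bE\langle x\rangle$ is an immediate extension of $\bE$ (this is precisely Tyne's valuation property), and since $\val_\cO(\bE)$ is a $\Lambda$-vector space, clause (3) as literally written would then also be satisfied by any $c$ with $\val_\cO(x-c)\in\val_\cO(\bE)$; so without that side condition the trichotomy is exhaustive but not exclusive. Two smaller imprecisions in your exclusivity discussion: first, $\res(d(x-c))\notin\res_\cO(\bE)$ does \emph{not} force $\val_\cO(x-c')=-\val_\cO(d)$ for every $c'\in\bE$ (take $c'$ with $\val_\cO(c'-c)<-\val_\cO(d)$); what it forces is $\val_\cO(x-c')\le -\val_\cO(d)=\val_\cO(x-c)$, which is exactly what is needed to conclude that $c$ realises $\max\val_\cO(x-\bE)$. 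Second, the assertion that (2) leaves the value group unchanged while (3) enlarges it is correct but rests on the Abhyankar/Wilkie-type inequality $\dim_{\dcl}\bigl(\res(\bE\langle x\rangle)/\res(\bE)\bigr)+\mathrm{rk}_\Lambda\bigl(\val(\bE\langle x\rangle)/\val(\bE)\bigr)\le 1$ for $1$-dimensional $T_\convex$-extensions (of which the paper's Fact~\ref{fact:res_dim_inequality} is the residue half); that ingredient should be named, since it is what makes the case split genuinely disjoint. Finally, in your case-(3) analysis, the cleaner way to phrase the contribution of the shift $c_0$ in the preparation is simply $\val_\cO(y-c_0)\in\{\val_\cO(y)\}\cup\val_\cO(\bE)$, because $\val_\cO(y)\notin\val_\cO(\bE)$ forces $\val_\cO(y-c_0)=\min(\val_\cO(y),\val_\cO(c_0))$; the ``absorbed by $a$'' wording obscures this. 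With those patches the sketch is a faithful reconstruction of how the Fact follows from the cited valuation property.
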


\section{Main results}

\subsection{Res-constructible extensions}
We recall the definition of res-constructible extension from the introduction and give some of its basic properties.

\begin{definition}\label{def:res_constructible}
	Let $\bE \preceq \bE_*\models T$ and let $\cO_*$ be a non-trivial $T$-convex subring of $\bE_*$. A possibly infinite tuple $\overline{y}\coloneqq (y_i: i\in I)$ in $\cO_*$ will be called a \emph{$\mathcal{O}_*$-res-construction} over $\bE$ if $\big(\res_{\mathcal{O}_*}(y_i): i \in I\big)$ is $\dcl_T$-independent over $\res(\bE, \cO)$. We then call the extension $\bE \langle y_i :i \in I \rangle$ of $\bE$ \emph{$\cO_*$-res-constructible} over $\bE$ (by the res-construction $(y_i: i \in I)$). When $\mathcal{O}_*$ is clear from the context we will just say \emph{res-construction} and \emph{res-constructible}.

 	We call an extension $(\bE,\cO) \preceq (\bE_*, \cO_*) \models T_\convex$ \emph{residually constructible} (\emph{res-constructible} for short) when $\bE_*$ is $\cO_*$-res constructible over $\bE$.
\end{definition}

\begin{remark}\label{rmk:res-constructions_are_independent}
    Notice that if $\overline{y}\coloneqq (y_i: i\in I)$ is a $\cO_*$-res-construction over $\bE$, then it is in particular $\dcl_T$-independent over $\bE$. This is because by Fact~\ref{fact:res_dim_inequality}, for all finite $F\subseteq I$, $\res_{\cO}(\bE \langle y_i:i \in F\rangle)=\res_{\cO}(\bE) \langle \res(y_i): i \in F\rangle$.
\end{remark}

\begin{remark}\label{rmk:res-constructions_transitive}
    Res-constructible extension are transitive (i.e.\ closed under composition): if $(\bE_0 , \cO_0) \preceq (\bE_1, \cO_1)$ and $(\bE_1, \cO_1)\preceq (\bE_2, \cO_2)$ are res-constructible, then $(\bE_0, \cO_0) \preceq (\bE_2, \cO_2)$ is res-constructible as well.
\end{remark}

\begin{remark}\label{rmk:res-constructibility_and_basis_orderings}
    In the same setting of Definition~\ref{def:res_constructible}, let $\lambda$ be an ordinal, and $(y_i: i <\lambda)$ a $\lambda$-indexed sequence in $\cO_*$. Then the following are equivalent:
    \begin{enumerate}
        \item $(y_i: i<\lambda)$ is a $\mathcal{O}_*$-res-construction over $\bE$;
        \item for all $i<\lambda$, we have $\res_{\cO_*}(y_i) \in \res(\bE \langle y_i: j < i+1\rangle) \setminus \res_{\cO_*}(\bE \langle y_j: j< i\rangle)$.
    \end{enumerate}
    In particular, by Fact~\ref{fact:res_dim_inequality}, for an extension $(\bE, \cO) \preceq (\bE_*, \cO_*)$ to be res-constructible, it is enough that for some $\dcl_T$-basis $B$ of $\bE_*$ over $\bE$, there is an ordinal indexing $(b_i:i <\alpha)$ of $B$ such that for all $i<\alpha$, one has $\res(\bE\langle b_j: j < i \rangle) \neq \res(\bE \langle b_j: j < i+1 \rangle)$.
\end{remark}

\begin{lemma}\label{lem:res-constructible_residue_sections}
    If $(\bE, \cO)\preceq (\bE_*, \cO_*)$ is res-constructible by the ordinal indexed res-construction $(y_i: i < \lambda)$, and $\bK$ is an elementary residue section of $(\bE, \cO)$, then $\bK \langle y_i: i< \lambda\rangle$ is an elementary residue section of $(\bE_*, \cO_*)$.
    \begin{proof}
        Notice that by Fact~\ref{fact:res_iso_max}~(3) it suffices to show that $\res(\bK \langle y_i : i <\lambda \rangle)=\res(\bE_*, \cO_*)$.
        Proceed by induction on $\lambda$. Let $\lambda= \beta+1$ and suppose the thesis holds for $\beta$, so assume that $\bK \langle y_i: i< \lambda\rangle$ is an elementary a residue section of $(\bE\langle y_i: i<\beta\rangle, \cO_* \cap \bE\langle y_i: i<\beta\rangle)$. Since by Fact~\ref{fact:res_dim_inequality} we must then have $\res(\bE \langle y_i: j < \lambda\rangle) = \res(\bE \langle y_i: j < \beta\rangle) \langle \res(y_\beta)\rangle$, we deduce by Fact~\ref{fact:res_iso_max} that $\res(\bK \langle y_i : i <\lambda \rangle)=\res(\bE \langle y_i: j < \lambda\rangle)$.

        Suppose instead that $\lambda$ is a limit ordinal and the thesis holds for each $\beta<\lambda$. If $x \in \bE_*$, then since $\lambda$ is a limit ordinal, there is $\beta<\lambda$ such that $x \in \bE \langle y_i : i< \beta \rangle$ and hence by inductive hypothesis $\res(x) \in \res(\bK \langle y_i : i <\beta \rangle) \subseteq \res(\bK \langle y_i:i < \lambda \rangle)$.
    \end{proof}
\end{lemma}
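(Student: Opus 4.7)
The plan is to reduce, via Fact~\ref{fact:res_iso_max}(3), to showing that the residue fields coincide, i.e.\ $\res(\bK \langle y_i : i < \lambda \rangle) = \res(\bE_*, \cO_*)$, and then establish this equality by transfinite induction on $\lambda$. The reduction hinges on uniqueness of the maximal elementary substructure contained in $\cO_*$: any elementary substructure of $\bE_*$ contained in $\cO_*$ whose image under the residue map is already all of $\res(\bE_*, \cO_*)$ must coincide with the (unique) maximal residue section, and hence itself be one.

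For the successor case $\lambda = \beta + 1$, I would set $\bK_\beta := \bK \langle y_i : i < \beta \rangle$ and assume inductively that $\bK_\beta$ is a residue section of $(\bE \langle y_i : i < \beta \rangle, \cO_* \cap \bE \langle y_i : i < \beta \rangle)$, so that $\res(\bK_\beta) = \res(\bE\langle y_i : i < \beta\rangle)$. Remark~\ref{rmk:res-constructibility_and_basis_orderings}(2) yields $\res(y_\beta) \notin \res(\bE\langle y_i : i < \beta\rangle)$, and then Fact~\ref{fact:res_dim_inequality} forces $\res(\bE\langle y_i : i < \beta+1\rangle) = \res(\bE\langle y_i : i < \beta\rangle)\langle \res(y_\beta)\rangle$. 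Since both $\bK_\beta$ and $y_\beta$ sit inside $\bK \langle y_i : i < \beta+1\rangle$, the residue field of the latter contains the generating set for the right-hand side, closing the step.

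The limit step is routine: any $x \in \bE_*$ lies in some $\bE \langle y_i : i < \beta \rangle$ with $\beta < \lambda$, by the finitary nature of $\dcl$, and applying the inductive hypothesis at $\beta$ places $\res(x)$ in $\res(\bK \langle y_i : i < \beta \rangle) \subseteq \res(\bK \langle y_i : i < \lambda \rangle)$.

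The main subtlety lies in the initial reduction step: to justify that matching residue fields suffices, one must implicitly know that $\bK \langle y_i \rangle$ embeds into $\cO_*$ with trivial intersection with the maximal ideal. These facts follow from uniqueness of the maximal residue section once the residue image is known to be onto — the candidate is forced by maximality to coincide with it. Everything else in the argument is a direct bookkeeping induction driven by the two core ingredients: Fact~\ref{fact:res_dim_inequality} (the residue dimension can grow by at most one) and the defining property of a res-construction (the residue of the new element is genuinely new).
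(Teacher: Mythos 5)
Your proposal takes essentially the same route as the paper: the same reduction to residue-field equality via Fact~\ref{fact:res_iso_max}(3), the same successor step via Fact~\ref{fact:res_dim_inequality}, and the same limit step via finite character of $\dcl_T$. One small imprecision in your closing paragraph: you appeal to ``the (unique) maximal residue section,'' but elementary residue sections are generally not unique, so there is no canonical one for $\bK\langle y_i:i<\lambda\rangle$ to coincide with. The correct mechanism for the reduction is that an elementary substructure $\bK'\preceq\bE_*$ with $\bK'\subseteq\cO_*$ and $\res(\bK')=\res(\bE_*,\cO_*)$ is automatically maximal among $\{\bK''\preceq\bE_*:\bK''\subseteq\cO_*\}$ — any proper enlargement inside $\cO_*$ would produce a nonzero element of $\co_*$, whose inverse escapes $\cO_*$ — and is therefore a residue section by Fact~\ref{fact:res_iso_max}(1). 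The substantive side-condition is the containment $\bK\langle y_i:i<\lambda\rangle\subseteq\cO_*$, and this is exactly what the inductive statement ``$\bK_\beta$ is a residue section of $(\bE\langle y_i:i<\beta\rangle,\cO_*\cap\bE\langle y_i:i<\beta\rangle)$,'' which you (like the paper) are actually inducting on, carries along through the construction.
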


The isomorphism type of a res-constructible extension only depends on the isomorphism type of the residue field extension $\res(\bE_*, \cO_*) \succeq \res(\bE, \cO)$.

\begin{lemma}\label{lem:res_constructible_ext_iso_type}
    If $(\bE, \cO) \preceq (\bE_1, \cO_1)$ and $(\bE, \cO) \preceq (\bE_2, \cO_2)$ are res-constructible extensions and $\varphi: \res(\bE_1, \cO_1) \to \res(\bE_2, \cO_2)$ is an isomorphism of models of $T$ over $\res(\bE, \cO)$, then there is an isomorphism $\psi: (\bE_1, \cO_1) \to (\bE_2, \cO_2)$ over $\bE$ such that $\res(\psi)=\varphi$.
    \begin{proof}
        Let $\bK$ be an elementary residue-section of $(\bE, \cO)$ and $(y_i: i< \lambda)$, $(z_i: i< \mu)$ be res-construction for $\bE_1$ and $\bE_2$. Thus $\bE= \bK \langle \overline{d} \rangle$ where $\overline{d}$ is a tuple such that $\tp(\overline{d}/\bK)$ is definable.
        By Lemma~\ref{lem:res-constructible_residue_sections}, $\bK_1\coloneqq \bK \langle y_i: i< \lambda \rangle$ and $\bK_2\coloneqq \bK \langle z_i : i < \mu\rangle$ are elementary residue sections for $(\bE_1,\cO_1)$ and $(\bE_2, \cO_2)$ respectively and moreover $\bE_1 = \bK_1 \langle \overline{d}\rangle$ and $\bE_2 = \bK_2 \langle \overline{d} \rangle$.
        Now observe that $\varphi$ induces an isomorphism $h: \bK_1 \to \bK_2$ over $\bK$. Finally notice that for both $i=1$ and $i=2$, $\tp(\overline{d}/\bK_i)$ is the unique definable extension of $\tp(\overline{d}/\bK)$ to $\bK_i$, in particular $h_* \tp(\overline{d}/\bK_1)=\tp(\overline{d}/\bK_2)$. Thus $h$ extends to the sought isomorphism $\psi: (\bE_1, \cO_1) \to (\bE_2, \cO_2)$ over $\bE$. 
    \end{proof}
\end{lemma}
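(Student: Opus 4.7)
The plan is to transport the residue-field isomorphism $\varphi$ to an isomorphism of suitable elementary residue sections, and then extend it by using that types over tame substructures are definable. First I would pick an elementary residue section $\bK$ of $(\bE, \cO)$ (these exist by the remark preceding Fact~\ref{fact:res_iso_max}) and write $\bE = \bK \langle \overline{d} \rangle$ for some tuple $\overline{d}$ (say a $\dcl_T$-basis of $\bE$ over $\bK$). By Lemma~\ref{lem:res-constructible_residue_sections}, $\bK_1 := \bK \langle y_i : i < \lambda \rangle$ and $\bK_2 := \bK \langle z_i : i < \mu \rangle$ are elementary residue sections of $(\bE_1, \cO_1)$ and $(\bE_2, \cO_2)$; since the $y_i$, $z_i$ are absorbed into these sections, one gets $\bE_1 = \bK_1 \langle \overline{d}\rangle$ and $\bE_2 = \bK_2 \langle \overline{d}\rangle$.

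Next, by Fact~\ref{fact:res_iso_max}(3), the standard-part maps furnish canonical $L$-isomorphisms $\beta_i : \bK_i \to \res(\bE_i, \cO_i)$ extending the canonical $L$-iso $\beta : \bK \to \res(\bE, \cO)$. Since $\varphi$ is assumed to be over $\res(\bE, \cO)$, the composition $h := \beta_2^{-1}\circ \varphi \circ \beta_1 : \bK_1 \to \bK_2$ is an $L$-isomorphism fixing $\bK$ pointwise.

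The last step is to extend $h$ to an $L_\convex$-isomorphism $\psi : (\bE_1, \cO_1) \to (\bE_2, \cO_2)$ sending $\overline{d}$ to $\overline{d}$. Since $\bK \preceq_\tame \bE$ and $\bK_i \preceq_\tame \bE_i$, the Marker--Steinhorn theorem tells me that $\tp(\overline{d}/\bK)$ and $\tp(\overline{d}/\bK_i)$ are definable; the main point I expect to be the only real subtlety is invoking that, in an o-minimal (hence NIP) theory, a definable type over a tame substructure admits a unique definable extension to any larger tame substructure. This identifies $\tp(\overline{d}/\bK_i)$ with the canonical extension of $\tp(\overline{d}/\bK)$ to $\bK_i$, and since $h$ fixes $\bK$, it follows that $h_*\tp(\overline{d}/\bK_1) = \tp(\overline{d}/\bK_2)$. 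Standard back-and-forth on definable closures then extends $h$ to an $L$-isomorphism $\psi : \bE_1 \to \bE_2$ with $\psi(\overline{d}) = \overline{d}$; in particular $\psi$ fixes $\bE = \bK \langle \overline{d}\rangle$ pointwise. Because $\psi$ is order-preserving and sends $\bK_1$ to $\bK_2$, it sends $\cO_1 = \CH_{\bE_1}(\bK_1)$ to $\cO_2 = \CH_{\bE_2}(\bK_2)$, so $\psi$ is an $L_\convex$-isomorphism. Finally $\res(\psi)$ coincides with $\beta_2 \circ h \circ \beta_1^{-1} = \varphi$ on $\res(\bE_1, \cO_1) = \beta_1(\bK_1)$, as required.
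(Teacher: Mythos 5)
Your proposal is correct and follows essentially the same route as the paper's proof: choose an elementary residue section $\bK$ of $(\bE,\cO)$, write $\bE = \bK\langle \overline{d}\rangle$, use Lemma~\ref{lem:res-constructible_residue_sections} to obtain residue sections $\bK_i = \bK\langle \text{res-construction}\rangle$ with $\bE_i = \bK_i\langle \overline{d}\rangle$, transport $\varphi$ to an $L$-isomorphism $h:\bK_1\to\bK_2$ over $\bK$, and then extend $h$ using that $\tp(\overline{d}/\bK_i)$ is the (unique) definable extension of $\tp(\overline{d}/\bK)$. The only differences are cosmetic: you spell out the standard-part identifications $\beta_i$ and the verification that $\psi$ carries $\cO_1 = \CH_{\bE_1}(\bK_1)$ to $\cO_2 = \CH_{\bE_2}(\bK_2)$, details the paper leaves implicit.
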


Res-constructible extension only contain residually generated elements.

\begin{lemma}\label{lem:res_implies_no_weak_elements}
    If $(\bE, \cO) \preceq (\bE_*, \cO_*)$ is a res-constructible extension then every $x \in \bE_* \setminus \bE$ is $\cO_*$-residually generated over $\bE$.
    \begin{proof}
        Let $(r_i: i< \alpha)$ be an ordinal-indexed res-construction for $\bE_*$ over $\bE$. Let $j$ be minimal such that $\tp(x/\bE)$ is realized in $\bE \langle r_i: i < j+1 \rangle$. Thus there is $x' \in \bE \langle r_i: i < j+1 \rangle \setminus \bE \langle r_i: i<j \rangle$ such that $\tp(x'/\bE)=\tp(x/\bE)$. Now let $F \subseteq j$ be such that $x' \in \bE \langle r_i : i \in F\cup\{j\}\rangle$. By Fact~\ref{fact:res_dim_inequality} we then have
        \[\res(\bE\langle x ', r_i: i \in F\rangle) = \res(\bE\langle r_i: i \in F \rangle)\langle \res(r_j) \rangle.\]
        In particular $\dim_{\dcl} (\res(\bE\langle  x ', r_i: i \in F\rangle)/\res(\bE))=|F|+1$.
        Suppose toward contradiction that $x'$ was not residually generated over $\bE$, i.e.\ that $\res(\bE\langle x '\rangle) = \res (\bE)$, then, again by Fact~\ref{fact:res_dim_inequality}, we would have $\dim_{\dcl} (\res(\bE\langle x', r_i :i<F\rangle/\res(\bE))\le |F|$, contradiction. Thus $x'$ is residually generated over $\bE$ and so also $x$ is, because $\tp(x/\bE)=\tp(x'/\bE)$.
    \end{proof}
\end{lemma}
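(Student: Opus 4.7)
The plan is to combine the trichotomy of Fact~\ref{fact:cuts_tricho} with a residue-field dimension count that rules out both the weakly-immediately-generated and purely-valuational options. Since the property ``$\cO_*$-residually generated over $\bE$'' depends only on $\tp(x/\bE)$ (it is invariant under $\bE$-automorphisms of $\bE_*$, and in an o-minimal setting $\tp(x/\bE)$ determines $\bE\langle x\rangle$ up to such an automorphism), it suffices to exhibit \emph{some} realization $x'$ of $\tp(x/\bE)$ inside $\bE_*$ that is residually generated. Moreover, since $x'\notin\bE$ and $\bE \prec \bE\langle x'\rangle$ has $\dcl$-rank $1$, by the exchange property the condition ``$x'$ is residually generated'' is equivalent to the weaker statement $\res(\bE\langle x'\rangle)\neq\res(\bE)$: any $y\in\bE\langle x'\rangle$ with $\res(y)\notin\res(\bE)$ lies in $\cO_*$ (as $0\in\res(\bE)$) and generates $\bE\langle x'\rangle$ over $\bE$.

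To locate $x'$, fix an ordinal-indexed res-construction $(r_i : i<\alpha)$ for $\bE_*$ over $\bE$ and let $j<\alpha$ be minimal with the property that $\tp(x/\bE)$ is realized in $\bE\langle r_i : i\le j\rangle$ by some element $x'$. Because $\dcl_T$ is finitary, we may pick a finite $F\subseteq j$ with $x'\in\bE\langle r_i : i\in F\cup\{j\}\rangle$, and by minimality of $j$, $x'\notin\bE\langle r_i : i<j\rangle$. Exchange in the pregeometry $\dcl_T$ then gives $r_j\in\bE\langle x', r_i : i\in F\rangle$, whence
\[
\bE\langle x', r_i : i\in F\rangle \;=\; \bE\langle r_i : i\in F\cup\{j\}\rangle.
\]

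Now count $\dcl_T$-dimensions of residue fields over $\res(\bE)$. By Remark~\ref{rmk:res-constructions_are_independent} together with Fact~\ref{fact:res_dim_inequality}, the right-hand side has residue field $\res(\bE)\langle\res(r_i): i\in F\cup\{j\}\rangle$, of dimension exactly $|F|+1$. If on the contrary $\res(\bE\langle x'\rangle)=\res(\bE)$, then iterating Fact~\ref{fact:res_dim_inequality} along the chain $\bE\prec\bE\langle x'\rangle\prec\bE\langle x', r_i: i\in F\rangle$ bounds the residue-field dimension of the left-hand side by $|F|$, a contradiction. Hence $\res(\bE\langle x'\rangle)\neq\res(\bE)$, and by the first paragraph $x'$, and therefore $x$, is residually generated. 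The one step that requires some care is the exchange argument identifying $\bE\langle x', r_i : i\in F\rangle$ with $\bE\langle r_i : i\in F\cup\{j\}\rangle$: this is where the minimality of $j$ is really used, and without it the dimension count would not close.
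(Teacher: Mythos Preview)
Your proof is correct and follows essentially the same line as the paper's: pick a minimal stage $j$ of a res-construction where $\tp(x/\bE)$ is realized, use exchange to identify $\bE\langle x', r_i:i\in F\rangle$ with $\bE\langle r_i:i\in F\cup\{j\}\rangle$, and derive a contradiction from the residue-field dimension count via Fact~\ref{fact:res_dim_inequality}. You are in fact more explicit than the paper about two points the paper leaves implicit: the exchange step yielding the equality of the two intermediate models, and the reason why ``not residually generated'' coincides with $\res(\bE\langle x'\rangle)=\res(\bE)$. Your opening reference to the trichotomy of Fact~\ref{fact:cuts_tricho} is a bit of a red herring, since you never invoke it---the argument proceeds entirely through the dimension count---but this does not affect correctness.
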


\begin{corollary}\label{cor:finite_dim_fact}
    If $(\bE, \cO) \prec (\bE_*, \cO_*)$ is a res-constructible extension and $\overline{y}$ is a finite tuple, then $\bE_*$ is $\cO_*$-res-constructible over $\bE \langle \overline{y} \rangle$ and $\bE \langle \overline{y} \rangle$ is $\cO_*$-res-constructible over $\bE$.
    \begin{proof}
        It suffices to prove the case in which $\overline{y}=(y_0)$ is a single element of $\bE_*$. If $y_0 \notin \bE$, then by Lemma~\ref{lem:res_implies_no_weak_elements}, $\bE \langle y_0\rangle$ is res-constructible over $\bE$, say by some $r \in \bE \langle y_0 \rangle$, such that $\res(r) \notin \res(\bE)$. Let $\bK$ be an elementary residue section of $(\bE, \cO)$ and let $\bK_*$ an extension of $\bK$ to an elementary residue section of $\bE_*$ such that $\bE_*=\dcl(\bE \cup \bK_*)$, which exists by Lemma~\ref{lem:res-constructible_residue_sections}.
        Now if $\overline{s}$ is a $\dcl_T$-basis of $\bK_*$ over $\bK\langle\std_{\bK_*}(r)\rangle$, then $\overline{s}$ is a $\cO_*$-res-construction for $\bE_*$ over $\bE\langle r \rangle=\bE\langle y_0\rangle$.
        In fact $\res(r, \overline{s})=\res(\std_{\bK_*}(r), \overline{s})$ is $\dcl$-independent over $\res(\bE, \cO)$; and since then $r\notin \bE \langle \overline{s}\rangle$, we also have by the exchange property that $\bE \langle r, \overline{s}\rangle =\bE_*$. 
    \end{proof}
\end{corollary}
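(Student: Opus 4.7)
The plan is to reduce to the single-element case by transitivity of res-constructibility (Remark~\ref{rmk:res-constructions_transitive}) and induction on the length of $\overline{y}$, so I may assume $\overline{y}=(y_0)$ with $y_0 \in \bE_* \setminus \bE$. The ``second half'' of the statement, that $\bE\langle y_0\rangle$ is res-constructible over $\bE$, is then essentially immediate from Lemma~\ref{lem:res_implies_no_weak_elements}: it yields an $r \in \bE\langle y_0\rangle$ with $\res(r) \notin \res(\bE)$ and $\bE\langle r\rangle = \bE\langle y_0\rangle$, so the singleton $(r)$ is a res-construction.

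The substantive task is to produce a res-construction for $\bE_*$ over $\bE\langle r\rangle$. My strategy is to linearize the problem by passing to a well-chosen elementary residue section. Fix a res-construction $(r_i : i<\lambda)$ of $\bE_*$ over $\bE$ and an elementary residue section $\bK$ of $\bE$; by Lemma~\ref{lem:res-constructible_residue_sections}, $\bK_* := \bK\langle r_i: i<\lambda\rangle$ is an elementary residue section of $\bE_*$, and since $\{r_i\}\subseteq \bK_*$ generates $\bE_*$ over $\bE$, we get $\bE_* = \dcl(\bE \cup \bK_*)$. Inside $\bK_*$ the element $\std_{\bK_*}(r)$ is well-defined (as $r\in \cO_*$) and lies in $\bK_*\setminus \bK$, since its residue equals $\res(r)\notin \res(\bE)=\res(\bK)$. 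I then let $\overline{s}$ be any $\dcl_T$-basis of $\bK_*$ over $\bK\langle \std_{\bK_*}(r)\rangle$; I expect $\overline{s}$ to be the desired res-construction for $\bE_*$ over $\bE\langle r\rangle$.

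Verifying this splits into two parts. For residue-independence of $\overline{s}$ over $\res(\bE\langle r\rangle)$: elements of $\overline{s}\subseteq \bK_*$ are identified with their own residues via $\std_{\bK_*}$, and the same isomorphism identifies $\res(\bE\langle r\rangle)$ with $\bK\langle \std_{\bK_*}(r)\rangle$, so the required independence reduces exactly to how $\overline{s}$ was chosen. For generation, $\bE_* = \dcl(\bE, \bK_*) = \dcl(\bE, \std_{\bK_*}(r), \overline{s}) = \dcl(\bE, r, \overline{s})$, where the last equality uses $\std_{\bK_*}(r)\in\dcl(\bK,r)\subseteq \dcl(\bE,r)$ in one direction and $r\in \bE_*$ in the other. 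The main conceptual point, and the step most at risk of going wrong, is arranging for the residue section $\bK_*$ to contain enough to generate $\bE_*$ over $\bE$; this is precisely what Lemma~\ref{lem:res-constructible_residue_sections} guarantees when $\bK_*$ is built from a res-construction of $\bE_*$.
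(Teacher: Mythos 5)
Your proposal follows the paper's route closely: invoke Lemma~\ref{lem:res_implies_no_weak_elements} to get a residual $r$ generating $\bE\langle y_0\rangle$, build the residue section $\bK_*$ via Lemma~\ref{lem:res-constructible_residue_sections}, take $\overline{s}$ a basis of $\bK_*$ over $\bK\langle\std_{\bK_*}(r)\rangle$, and check independence of residues and generation. The residue-independence check is fine.

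However, the generation step contains a genuine error. You claim $\std_{\bK_*}(r)\in\dcl(\bK,r)$, but this is false in general: the standard-part map $\std_{\bK_*}$ is not definable, and there is no reason for $\std_{\bK_*}(r)$ to lie in $\bK\langle r\rangle$. For a concrete counterexample in $T=\RCF$: take $\bK$ the real algebraic numbers, $\bE = \bK\langle\epsilon\rangle$ with $\epsilon$ infinitesimal over $\bK$ and $\cO = \CH_\bE(\bK)$, and $\bE_* = \bE\langle\pi\rangle$ where $\pi$ is a fresh residual element with $\pi\in\bK_*=\bK\langle\pi\rangle$. Setting $y_0 = r = \pi+\epsilon$ (which is residual and generates $\bE_*$ over $\bE$), one has $\std_{\bK_*}(r)=\pi$, while $\dcl(\bK,r)=\bK(\pi+\epsilon)^{\mathrm{rc}}$ has transcendence degree $1$ over $\bK$ and so cannot contain both $\pi$ and $\epsilon$; hence $\pi\notin\dcl(\bK,r)$. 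What actually closes the argument (and what the paper does) is an exchange/dimension argument: since $\res(r,\overline{s})$ is $\dcl$-independent over $\res(\bE)$, Remark~\ref{rmk:res-constructions_are_independent} gives that $(r,\overline{s})$ is $\dcl_T$-independent over $\bE$ and in particular $r\notin\bE\langle\overline{s}\rangle$; comparing $\dcl_T$-dimensions of $\bE\langle r,\overline{s}\rangle$ and $\bE_* = \bE\langle\std_{\bK_*}(r),\overline{s}\rangle$ over $\bE$, exchange forces $\bE\langle r,\overline{s}\rangle = \bE_*$. You cannot first conclude $\std_{\bK_*}(r)\in\dcl(\bE,r,\overline{s})$ and then deduce generation, since that inclusion is only known after generation is established.
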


\begin{lemma}\label{lem:countable_residual_cuts}
    Let $(\bE, \cO)\models T_\convex$, and let $x$ be an element of an elementary extension $(\bE_*,\cO_*)$, such that $x$ is not residually generated over $(\bE, \cO)$. If $\val(\bE, \cO)$ does not contain any uncountable ordinal, then there is a countable $S\subseteq \bE$, such that $\tp_{L_\convex}(x/S) \vdash \tp_{L_\convex}(x/\bE)$.
    \begin{proof}
        To show that an $x\notin \bE$ is such that $\tp_{L_\convex}(x/S) \vdash \tp_{L_\convex}(x/\bE)$ for some countable $S\subseteq \bE$, it is enough to show that each of $\bE^{<x}$ and $\bE^{<-x}$ either has countable cofinality, or is of the form $\{c\pm y : y \in \bE, y\leq 0 \text{ or }\val(y)\geq v\}$ for some $c \in \bE$, $v \in \val(\bE)$, and some choice of $\pm$. By symmetry, it suffices to check $\bE^{<x}$.
        Now, if $\val(x-\bE^{<x})$ has no maximum, then it is included in $\val(\bE)$ and its cofinality equals the cofinality of $\bE^{<x}$, whence we are done by hypothesis. Suppose therefore that $\val(x-\bE^{<x})$ has a maximum, and let it be achieved by $c \in \bE^{<x}$. Note that since $x$ is not residually generated, $\val(x-c) \notin \val(\bE)$.
        Consider $\{ \val(y) : y \in \bE, 0<y<x-c\}$. If this has no minimum, then it has countable cofinality, so $\bE^{<x} = c+\bE^{<x-c}$ has countable cofinality.
        If it has a minimum, say $v$, then, since $v \in \val(\bE)$, $v > \val(x-c)$.
        Thus, for all $y \in \bE$, we have that $c+y<x$ iff $y\leq 0$ or $\val(y)\geq v$, which is precisely the second possibility.
    \end{proof}
\end{lemma}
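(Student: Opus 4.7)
The plan is to use weak o-minimality of $T_\convex$: complete 1-types over a model are determined by their induced order cuts. It therefore suffices to find a countable $S\subseteq \bE$ pinning down the cut $(\bE^{<x}, \bE^{>x})$. By symmetry (apply to $-x$), I reduce to $\bE^{<x}$. I aim to show this cut either (i) has countable cofinality in $\bE$—yielding a cofinal $\omega$-sequence as $S$—or (ii) has the simple form $\{c + y : y \in \bE,\, y \leq 0 \text{ or } \val(y) \geq v\}$ for some $c \in \bE$ and $v \in \val(\bE)$, in which case $S = \{c, a\}$ for any $a \in \bE$ with $\val(a) = v$ works, via the $L_\convex$-formula "$(y-c)/a \notin \cO$" in $\tp(x/S)$.

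The core case analysis is on the set $V := \val(x - \bE^{<x}) \subseteq \val(\bE_*)$. First, if $V$ has no maximum, I would show $V \subseteq \val(\bE)$ using that for $y, y' \in \bE^{<x}$ with $\val(x-y') > \val(x-y)$, the difference satisfies $\val(y-y') = \val(x-y)$, an element of $\val(\bE)$. The map $y \mapsto \val(x-y)$ is order-reversing between $\bE^{<x}$ and $V$, matching their cofinalities; by the hypothesis on $\val(\bE)$ (no uncountable ordinal), every subset has countable cofinality, so $\bE^{<x}$ does too, placing us in case (i).

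If $V$ attains its maximum at some $c \in \bE^{<x}$, set $w := \val(x-c)$. The substantive step is showing $w \notin \val(\bE)$, where the not-residually-generated hypothesis enters. Were $w = \val(a)$ for some $a \in \bE^{>0}$, then $r := (x-c)/a \in \cO_* \setminus \co_*$ generates the same extension over $\bE$ as $x$; the hypothesis forces $\res(r) \in \res(\bE)$, and lifting to $\alpha \in \bE$ gives $\val(x-(c+a\alpha)) > w$. A perturbation $\alpha \mapsto \alpha - \gamma$ with $\gamma \in \bE^{>0}$ of suitably small positive valuation (available by divisibility of $\val(\bE)$, since $T \supseteq \RCF$) places the corrected approximation $c + a(\alpha - \gamma) < x$ while preserving $\val(x - (c+a(\alpha-\gamma))) > w$, contradicting maximality at $c$. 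Once $w \notin \val(\bE)$ is established, consider $W := \{\val(y) : y \in \bE,\, 0<y<x-c\} = \val(\bE) \cap (w, \infty)$. If $W$ has no minimum, its countable coinitiality—derived from the hypothesis via negation, using that $\val(\bE)$ is a group so an uncountable decreasing chain would produce an uncountable ordinal after negation—pulls back to a cofinal $\omega$-sequence in $\bE^{<x-c}$, hence in $\bE^{<x}$, giving case (i) again. If $W$ has a minimum $v$ (vacuous under the divisibility of $\val(\bE)$ but included for logical completeness), direct inspection yields the form in case (ii).

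The main obstacle is the valuation-theoretic argument that $\val(x-c) \notin \val(\bE)$ in the maximum case: this genuinely uses the not-residually-generated hypothesis together with a sign correction to ensure the improved approximation lies below $x$. The remaining work is routine cofinality bookkeeping based on the hypothesis on $\val(\bE)$.
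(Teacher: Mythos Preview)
Your argument tracks the paper's line for line: the same reduction via weak o-minimality to analysing the cut, the same symmetry reduction to $\bE^{<x}$, the same case split on whether $V=\val(x-\bE^{<x})$ attains a maximum, and the same analysis of $W$ thereafter. Where you add content is in justifying the claim $w:=\val(x-c)\notin\val(\bE)$, which the paper merely asserts.

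That justification does not quite close. To force the corrected approximation $c+a(\alpha-\gamma)$ below $x$ you need $\gamma\in\bE^{>0}$ with $\gamma>\alpha-(x-c)/a$ and $\val(\gamma)>0$; but $\alpha-(x-c)/a$ lies in $\co_*^{>0}$, and nothing rules out its valuation lying strictly below all of $\val(\bE)^{>0}$, in which case no such $\gamma$ exists. Indeed the intermediate claim itself can fail: if $\val(\bE_*)^{>0}$ contains some $\delta$ with $0<\delta<\val(\bE)^{>0}$, then $x=c'-a\epsilon$ (with $c',a\in\bE$, $a>0$, $\val(\epsilon)=\delta$) is purely valuational over $\bE$, hence not residually generated, while a direct computation gives $\max\val(x-\bE^{<x})=\val(a)\in\val(\bE)$. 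The paper's bare assertion therefore has the same defect. (The lemma's \emph{conclusion} is unaffected: in this situation $\bE^{<x}=\{y: c'>y \wedge a/(c'-y)\in\cO\}$ is still cut out by finitely many parameters, so one should simply allow this additional shape alongside the ``$y\le 0$ or $\val(y)\ge v$'' form.) Separately, your parenthetical that the ``$W$ has a minimum'' case is vacuous by divisibility is not right---for $w\notin\val(\bE)$ the set $\val(\bE)^{>w}$ can have a least element (e.g.\ $0$, when $w$ is infinitesimally negative)---but since you treat the case regardless, this is harmless.
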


In Section~\ref{sec:Tressl_problem} we will need the following two easy facts about res-constructible extensions with respect to different $T$-convex valuations.

\begin{lemma}\label{lem:res-constr_for_different_vals}
    Let $\bE \preceq \bU \models T$ and let $\cO \subseteq \cO'$ be $T$-convex valuation subrings of $\bU$. If $\overline{x}$ is a $\cO$-res-construction over $\bE$ then $\overline{x}$ is also an $\cO'$-res-construction over $\bE$ and $\res_{\cO'}(\overline{x})$ is a $\res_{\cO'}(\cO)$-res construction over $\res_\cO(\bE)$.
    \begin{proof}
        Notice that $V\coloneqq \res_{\cO'}(\cO)$ is a $T$-convex valuation subring of $\res_{\cO'}(\bU)$ and that $\res_{\cO}$ factors through $\res_{\cO'}$ as $\res_{\cO} = \res_{V} \circ \res_{\cO'}$.
        Now suppose that $\overline{x}\coloneqq (x_i: i \in I)$ is such that $(\res_\cO(x_i): i \in I)$ is $\dcl$-independent over $\res_\cO(\bE)$. It then follows from Remark~\ref{rmk:res-constructions_are_independent} that $\res_{\cO'}(\overline{x})$ is independent over $\res_{\cO'}(\bE)$ because $\res_{\cO}(\overline{x})=\res_V (\res_{\cO'}(\overline{x}))$ is $\dcl_T$-independent.
    \end{proof}
\end{lemma}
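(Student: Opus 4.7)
The plan is to exploit that, since $\cO\subseteq\cO'$, the valuation $\cO'$ is a coarsening of $\cO$. Under such a coarsening, the image $V:=\res_{\cO'}(\cO)$ is naturally a $T$-convex valuation subring of $\res_{\cO'}(\bU)\models T$; moreover, under the canonical identification of $\res_V(\res_{\cO'}(\bU))$ with $\res_\cO(\bU)$, the map $\res_\cO$ factors as $\res_\cO=\pi\circ\res_{\cO'}$ with $\pi:=\res_V$, and similarly $\res_\cO(\bE)$ is identified with the residue field of the induced valuation $V\cap\res_{\cO'}(\bE)$ on $\res_{\cO'}(\bE)$. First I would record this setup, citing the standard facts about coarsenings from \cite{dries1995t}.

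With this in hand the second assertion is immediate from unwinding definitions: the hypothesis says that $(\res_\cO(x_i):i\in I)=\pi(\res_{\cO'}(x_i):i\in I)$ is $\dcl_T$-independent over $\res_\cO(\bE)=\pi(\res_{\cO'}(\bE))$, which is exactly the definition of $\res_{\cO'}(\bar x)$ being a $V$-res-construction over $\res_{\cO'}(\bE)$ inside the model $(\res_{\cO'}(\bU),V)\models T_\convex$.

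For the first assertion I apply Remark~\ref{rmk:res-constructions_are_independent} to the $V$-res-construction $\res_{\cO'}(\bar x)$ just obtained: the remark yields $\dcl_T$-independence of $\res_{\cO'}(\bar x)$ over $\res_{\cO'}(\bE)$ inside $\res_{\cO'}(\bU)\models T$, which is by definition what it means for $\bar x$ to be an $\cO'$-res-construction over $\bE$.

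There is no real obstacle here; the proof is pure bookkeeping once the residue-factorization is made explicit. The only substantive input is that $V=\res_{\cO'}(\cO)$ is itself $T$-convex in $\res_{\cO'}(\bU)$, which follows from the convexity of $\cO$ inside $\cO'$ together with the observation that every $\emptyset$-definable continuous unary $L$-function on $\bU$ descends to a $\emptyset$-definable continuous unary $L$-function on $\res_{\cO'}(\bU)$ that preserves the image $V$.
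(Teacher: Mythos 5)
Your proof is correct and is essentially the same argument as the paper's: set up the factorization $\res_{\cO}=\res_V\circ\res_{\cO'}$ with $V=\res_{\cO'}(\cO)$, observe that the hypothesis literally is the statement that $\res_{\cO'}(\overline{x})$ is a $V$-res-construction over $\res_{\cO'}(\bE)$, and then invoke Remark~\ref{rmk:res-constructions_are_independent} in $\res_{\cO'}(\bU)$ to get $\dcl_T$-independence of $\res_{\cO'}(\overline{x})$ over $\res_{\cO'}(\bE)$, which is the first assertion. The only cosmetic difference is that you make the order of the two assertions and the definitional unwinding more explicit than the paper's compressed write-up.
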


\begin{corollary}\label{cor:res-gen_for_different_vals}
    Let $\bE \prec \bU \models T$, $\cO \subseteq \cO'$ be $T$-convex valuation subrings of $\bU$, and $y \in \bU \setminus \bE$. If $y$ is $\cO$-residually generated, then it is $\cO'$-residually generated.
\end{corollary}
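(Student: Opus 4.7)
The plan is to reduce immediately to Lemma~\ref{lem:res-constr_for_different_vals} applied to the witness of $\cO$-residual generation.

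First I would unpack the hypothesis: by definition there exists $r \in \cO$ with $\bE\langle y \rangle = \bE\langle r \rangle$ and $\res_\cO(r) \notin \res_\cO(\bE)$. Viewing $r$ as a one-element tuple, the $\dcl_T$-independence condition over $\res_\cO(\bE)$ is trivially the single statement $\res_\cO(r) \notin \res_\cO(\bE)$, so $r$ is an $\cO$-res-construction over $\bE$.

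Next I would invoke Lemma~\ref{lem:res-constr_for_different_vals} to conclude that the same $r$ is also an $\cO'$-res-construction over $\bE$. Unpacking this for the one-element tuple gives $r \in \cO'$ (automatic from $\cO \subseteq \cO'$) and $\res_{\cO'}(r) \notin \res_{\cO'}(\bE)$, which is exactly the assertion that $r$ is $\cO'$-residual over $\bE$.

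Finally, since $\bE\langle y \rangle = \bE\langle r \rangle$ and $r$ witnesses $\cO'$-residual generation, $y$ is $\cO'$-residually generated over $\bE$. There is no real obstacle here: the entire argument is a direct application of the preceding lemma to a single-element tuple, with the only thing to verify being that a one-element $\cO$-res-construction is literally the same datum as an $\cO$-residual element.
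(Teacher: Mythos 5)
Your argument is correct and is exactly the intended one: the paper states this as a corollary of Lemma~\ref{lem:res-constr_for_different_vals} with no written proof, and your reduction (a single $\cO$-residual element is precisely a one-element $\cO$-res-construction, since $\res_\cO(\bE)$ is a model of $T$ and hence $\dcl_T$-closed) is the immediate deduction the authors had in mind. The only thing worth noting you already checked: the same witness $r$ lies in $\cO'$ because $\cO \subseteq \cO'$, so the lemma really does hand you a $\cO'$-residual generator of $\bE\langle y\rangle$.
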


\subsection{Right factors of res-constructible extensions} This subsection is devoted to the proof of Theorem~\ref{introthm:Main}, which will result as a wrap-up of Theorems~\ref{thm:mainthm} and \ref{thm:mainthm_negative}. Throughout this subsection we fix a res-constructible extension $(\bE_*, \cO_*) \succeq (\bE, \cO)$ of model of $T_\convex$ and let $\overline{r}$ be a $\cO_*$-res-construction for $\bE_*$ over $\bE$. We also let $\bE_1$ be an intermediate extension $\bE \preceq \bE_1 \preceq \bE_*$.

\begin{definition}\label{def:strong}
    Let $\bK\preceq_\tame \bE$ be a residue section for $(\bE, \cO)$ and $\bK_* \coloneqq  \bK \langle \overline{r} \rangle$, thus $\bK_*$ is an elementary residue section for $(\bE_*, \cO_*)$ by Lemma~\ref{lem:res-constructible_residue_sections}.
    We will say that a $\cO_*$-res-construction $\overline{x}$ in $\bE_1$ over $\bE$ is
    \begin{enumerate}
        \item \emph{$\bK_*$-orthogonal} in $\bE_1$ if $\tp_{L_\convex}(\std_{\bK_*}(\overline{x}) /\bE\langle \overline{x}\rangle) \vdash \tp_{L_\convex}(\std_{\bK_*}(\overline{x}) /\bE_1)$;
        \item \emph{$\bK_*$-strong} in $\bE_1$ (notation $\overline{x} \triangleleft_{\bK_*} \bE_1$) if it is $\bK_*$-orthogonal in $\bE_1$, and $\bE \langle \overline{x}\rangle = \bE_1 \cap \bE \langle \std_{\bK^*}(\overline{x})\rangle$.
    \end{enumerate}
\end{definition}

\begin{lemma}\label{lem:invariance}
    Let $\overline{x} \triangleleft_{\bK_*}\bE_1$, and suppose $\psi: (\bE_*, \cO_*)\to (\bE_*, \cO_*)$ is an elementary self-embedding fixing $\std_{\bK_*}(\overline{x})$ and $\bE$ elementwise. Then $\psi(\overline{x}) \triangleleft_{\bK_*}\psi(\bE_1)$.
    \begin{proof}
        Since $\overline{x} \triangleleft_{\bK_*}\bE_1$, then $\overline{x} \subset \bE\langle \std_{\bK_*}(\overline{x})\rangle$. Thus, $\psi$ also fixes $\overline{x}$ elementwise. This implies that $\psi(\std_{\bK_*}(\overline{x}))=\std_{\bK_*}(\psi (\overline{x}))$. Then, by applying the elementarity of $\psi$ and the fact that $\overline{x}$ is $\bK_*$-orthogonal in $\bE_1$, we deduce that
        \[\tp\big(\psi(\std_{\bK_*}(\overline{x})) / \psi(\bE)\langle \psi(\overline{x})\rangle\big) \vdash \tp\big(\psi(\std_{\bK_*}(\overline{x})) /\psi(\bE_1)\big).\]
        Applying $\psi(\std_{\bK_*}(\overline{x}))=\std_{\bK_*}(\psi (\overline{x}))$ yields that $\psi(\overline{x})$ is $\bK_*$-orthogonal in $\psi(\bE_1)$. Finally observe that $\bE\langle \psi(\overline{x}) \rangle= \psi(\bE \langle \overline{x} \rangle) = \psi(\bE_1) \cap \psi(\bE \langle \std_{\bK_*}(\overline{x})\rangle) = \psi(\bE_1) \cap \bE \langle \std_{\bK_*}(\psi(\overline{x})) \rangle$.
    \end{proof}
\end{lemma}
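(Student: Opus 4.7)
The plan is to transfer both clauses of $\overline{x}\triangleleft_{\bK_*}\bE_1$ across $\psi$ by sheer elementarity, once a small commutation identity is in place. I would begin by observing that the intersection clause $\bE\langle\overline{x}\rangle = \bE_1\cap\bE\langle\std_{\bK_*}(\overline{x})\rangle$ forces $\overline{x}\subseteq\bE\langle\std_{\bK_*}(\overline{x})\rangle$; since $\psi$ fixes both $\bE$ and $\std_{\bK_*}(\overline{x})$ pointwise, it must therefore fix $\overline{x}$ pointwise as well. In particular $\psi(\overline{x})=\overline{x}$ is trivially a $\cO_*$-res-construction over $\bE$ sitting inside $\psi(\bE_1)$, and the identity
\[
\psi\bigl(\std_{\bK_*}(\overline{x})\bigr) \;=\; \std_{\bK_*}(\overline{x}) \;=\; \std_{\bK_*}\bigl(\psi(\overline{x})\bigr)
\]
is immediate — bypassing any concern about whether $\psi$ preserves $\bK_*$ setwise.

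Next I would apply the elementary self-embedding $\psi$ to the $\bK_*$-orthogonality assertion
\[
\tp_{L_\convex}\bigl(\std_{\bK_*}(\overline{x})/\bE\langle\overline{x}\rangle\bigr) \;\vdash\; \tp_{L_\convex}\bigl(\std_{\bK_*}(\overline{x})/\bE_1\bigr);
\]
using $\psi(\bE)=\bE$ and the commutation above, this pushes forward to $\bK_*$-orthogonality of $\psi(\overline{x})$ in $\psi(\bE_1)$. For the remaining clause, applying $\psi$ to $\bE\langle\overline{x}\rangle=\bE_1\cap\bE\langle\std_{\bK_*}(\overline{x})\rangle$ gives $\bE\langle\psi(\overline{x})\rangle = \psi(\bE_1)\cap\bE\langle\std_{\bK_*}(\psi(\overline{x}))\rangle$, which is precisely what is required.

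I do not anticipate any genuine obstacle: the only conceptually nontrivial move is the initial observation that $\psi$ fixes $\overline{x}$ pointwise. That observation is exactly what lets the $\std_{\bK_*}$--$\psi$ commutation go through without the (unavailable) assumption $\psi(\bK_*)=\bK_*$, and every subsequent step is a routine push-forward by elementarity.
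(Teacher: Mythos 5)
Your argument is correct and follows the paper's own proof essentially verbatim: both extract $\overline{x}\subseteq\bE\langle\std_{\bK_*}(\overline{x})\rangle$ from the closure clause, conclude that $\psi$ fixes $\overline{x}$ pointwise, deduce the commutation $\psi(\std_{\bK_*}(\overline{x}))=\std_{\bK_*}(\psi(\overline{x}))$, and then push the orthogonality and closure conditions forward by elementarity and injectivity of $\psi$. Your extra remark that this route sidesteps the question of whether $\psi$ preserves $\bK_*$ setwise is a nice clarification but does not change the substance.
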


\begin{lemma}\label{lem:K_*orthogonal}
    Let $\overline{x}$ be a $\bK_*$-orthogonal $\cO_*$-res-construction in $\bE_1$ and $\overline{y}$ a $\cO_*$-res-construction in $\bE_1$ over $\bE \langle \overline{x} \rangle$. If $z\in \bE_1$ is $\cO_*$-residually generated over $\bE \langle \std_{\bK_*}(\overline{x}), \overline{y} \rangle$ then it is $\cO_*$-residually generated over $\bE \langle \overline{x}, \overline{y} \rangle$.
    \begin{proof}
        Notice that $\res(\bE \langle \std_{\bK_*}(\overline{x}), \overline{y}\rangle) = \res(\bE \langle \overline{x},\overline{y}\rangle)\subseteq \res(\bE_1)$.
        Suppose that $z$ is residually generated over $\bE \langle \std_{\bK_*}(\overline{x}), \overline{y}\rangle$. Then there is a finite subtuple $\overline{s}$ of $\std_{\bK_*}(\overline{x})$ and a $\bE\langle \overline{y} \rangle$-definable function $f$ such that $\res(f(\overline{s}, z)) \notin \res(\bE \langle \std_{\bK_*}(\overline{x}), \overline{y}\rangle)$.
        However $\res(f(\overline{s},z))=\res(a)$ for some $a \in \bE_1$ and by $\bK_*$-orthogonality of $\overline{x}$ in $\bE_1$, we have $\tp_{L_\convex}(s/\bE\langle \overline{x} \rangle) \vdash \res(f(\overline{s},z))=\res(a)$.
        But then there is a $L_\convex$-formula $\varphi(\overline{s})$ with parameters from $\bE \langle \overline{x}\rangle$ such that $\varphi(\overline{s})\vdash \res(f(\overline{s},z))=\res(a)$. Taking $\overline{s}'$ in $\bE \langle \overline{x} \rangle$ such that $\big(\bE \langle \overline{x} \rangle, \cO_* \cap \bE \langle \overline{x}\rangle\big) \models \varphi(\overline{s}')$ we thus get $\res(f(\overline{s}',z))=\res(a)=\res(f(\overline{s},z))$, whence $z$ is $\cO_*$-residually generated over $\bE \langle \overline{x}, \overline{y}\rangle$.
    \end{proof}
\end{lemma}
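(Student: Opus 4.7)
The strategy is to use $\bK_*$-orthogonality to swap the ``external'' parameter tuple $\overline{s}\subseteq\std_{\bK_*}(\overline{x})\subseteq\bK_*$ for an ``internal'' tuple $\overline{s}'\in\bE\langle\overline{x}\rangle\subseteq\bE_1$ while preserving the residue class of $f(\cdot,z)$. If this can be done, then the witness of residual generation over $\bE\langle\std_{\bK_*}(\overline{x}),\overline{y}\rangle$ pulls back to a witness over $\bE\langle\overline{x},\overline{y}\rangle$, as desired.

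As a preparatory step, I would verify that $\res(\bE\langle\std_{\bK_*}(\overline{x}),\overline{y},z\rangle)=\res(\bE\langle\overline{x},\overline{y},z\rangle)$. This is a consequence of Fact~\ref{fact:res_dim_inequality} combined with the identity $\res(\std_{\bK_*}(\overline{x}))=\res(\overline{x})$: both nested chains of $\dcl$-extensions contain $\res(\bE)\langle\res(\overline{x}),\res(\overline{y}),\res(z)\rangle$ and, by the dimension bound, cannot extend it further. Consequently, fixing a finite subtuple $\overline{s}\subseteq\std_{\bK_*}(\overline{x})$ and a function $f$ definable over $\bE\langle\overline{y}\rangle$ such that $\eta:=\res(f(\overline{s},z))\notin\res(\bE\langle\std_{\bK_*}(\overline{x}),\overline{y}\rangle)$, the same identity forces $\eta\notin\res(\bE\langle\overline{x},\overline{y}\rangle)$, while $\eta\in\res(\bE\langle\overline{x},\overline{y},z\rangle)\subseteq\res(\bE_1)$. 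Hence $\eta=\res(a)$ for some $a\in\bE_1\cap\cO_*$.

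Next, I would invoke $\bK_*$-orthogonality. The $L_\convex$-formula ``$\res(f(w,z))=\res(a)$'' has parameters in $\bE_1$ (namely $a$, $z$, and those used in defining $f$), and it is satisfied by $\std_{\bK_*}(\overline{x})$. Orthogonality therefore yields a formula $\varphi(w)$ over $\bE\langle\overline{x}\rangle$, also satisfied by $\std_{\bK_*}(\overline{x})$, such that $\varphi(w)\vdash\res(f(w,z))=\res(a)$. Since $\bE\langle\overline{x}\rangle\prec\bE_*$, consistency of $\varphi$ yields a realization $\overline{s}'\in\bE\langle\overline{x}\rangle$; then $f(\overline{s}',z)\in\bE\langle\overline{x},\overline{y},z\rangle\cap\cO_*$ has residue $\eta\notin\res(\bE\langle\overline{x},\overline{y}\rangle)$, which certifies that $z$ is $\cO_*$-residually generated over $\bE\langle\overline{x},\overline{y}\rangle$.

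The main obstacle is precisely the pivot in the preparatory step: one must locate the element $a$ inside $\bE_1$ rather than merely in $\bE_*$, since otherwise the target formula would carry parameters outside $\bE_1$ and $\bK_*$-orthogonality could not be applied. The residue-field dimension computation is exactly what confines $\eta$ to $\res(\bE_1)$ despite the apparent involvement of $\std_{\bK_*}(\overline{x})$ in its definition.
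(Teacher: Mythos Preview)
Your strategy is exactly the paper's: find $a\in\bE_1$ with $\res(a)=\eta:=\res(f(\overline{s},z))$, then use $\bK_*$-orthogonality to trade $\overline{s}$ for some $\overline{s}'\in\bE\langle\overline{x}\rangle$, and finally observe that $f(\overline{s}',z)$ witnesses residual generation over $\bE\langle\overline{x},\overline{y}\rangle$. The orthogonality step and the final conclusion are handled correctly and match the paper almost verbatim.

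The gap is in the preparatory dimension argument. You claim that both $\res(\bE\langle\std_{\bK_*}(\overline{x}),\overline{y},z\rangle)$ and $\res(\bE\langle\overline{x},\overline{y},z\rangle)$ coincide with $\res(\bE)\langle\res(\overline{x}),\res(\overline{y}),\res(z)\rangle$ because ``by the dimension bound, they cannot extend it further.'' But nothing forces $\res(z)$ to be independent of $\res(\overline{x}),\res(\overline{y})$ over $\res(\bE)$: the hypothesis is only that $z$ is \emph{residually generated}, i.e.\ some definable function of $z$ and parameters is residual, not $z$ itself. If for instance $z\notin\cO_*$ (so $\res(z)=0$), or more generally $\res(z)\in\res(\bE)\langle\res(\overline{x}),\res(\overline{y})\rangle$, then your proposed common value has dimension only $|\overline{x}'|+|\overline{y}'|$ over $\res(\bE)$, while Fact~\ref{fact:res_dim_inequality} allows dimension $|\overline{x}'|+|\overline{y}'|+1$. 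And indeed $\res(\bE\langle\std_{\bK_*}(\overline{x}),\overline{y},z\rangle)$ \emph{does} realise this upper bound, since it contains $\eta\notin\res(\bE)\langle\res(\overline{x}),\res(\overline{y})\rangle$. So the sandwich does not close and the claimed equality fails exactly in the case that matters. You correctly flag this pivot as ``the main obstacle,'' but the argument you offer for it does not go through. The paper, for its part, simply asserts the existence of $a\in\bE_1$ at this point without further comment; whatever justification is intended there, it is not the one you wrote down.
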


\begin{lemma}\label{lem:toname}
    Let $\overline{x} \triangleleft_{\bK_*} \bE_1$ and let $\overline{y}$ be a finite res-construction in $\bE_1$ over $\bE \langle \overline{x} \rangle$. Then every $z \in \bE_1 \setminus \bE \langle \overline{x}, \overline{y}\rangle$ is $\cO_*$-residually generated over $\bE \langle \overline{x}, \overline{y}\rangle$.
    \begin{proof}
        Let $\overline{r}'$ be a completion of $\std_{\bK_*}(\overline{x},\overline{y})$ to a $\dcl_T$-basis of $\bK_*$. Set $\psi(\std_{\bK_*}(\overline{y}))\coloneqq \overline{y}$, $\psi(\std_{\bK_*}(\overline{x}))\coloneqq \std_{\bK_*}(\overline{x})$ and $\psi_{\bK_*}(\overline{r}')=\overline{r}'$.
        Notice that $\psi$ extends to an elementary self-embedding of $(\bE_*, \cO_*)$ over $\bE$ and that it fixes $\bE\langle \std_{\bK_*}(\overline{x}), \overline{r}' \rangle$ which has finite codimension in $\bE_*$, therefore $\psi$ is an automorphism. By Lemma~\ref{lem:invariance}, we have $\psi^{-1}(\overline{x}) \triangleleft_{\psi^{-1}(\bK_*)} \psi^{-1}(\bE_1)$. Moreover $\psi^{-1}(z) \in \psi^{-1}(\bE_1)$ must be residual over $\psi^{-1}(\bE\langle \std_{\bK_*}(\overline{x}), \overline{y} \rangle)=\bE \langle \std_{\bK_*}(\overline{x}), \std_{\bK_*}(\overline{y})\rangle$ whence by Lemma~\ref{lem:K_*orthogonal} it is $\cO_*$-residually generated over $\psi^{-1}(\bE \langle \overline{x}, \overline{y}\rangle)$. Applying $\psi$ finally yields that $z$ is $\cO_*$-residual over $\bE\langle \overline{x}, \overline{y}\rangle$.
    \end{proof}
\end{lemma}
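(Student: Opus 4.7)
The plan is to reduce to Lemma~\ref{lem:K_*orthogonal} by transporting the setup via an automorphism of $(\bE_*,\cO_*)$ that replaces $\overline{y}$ with its standard part $\std_{\bK_*}(\overline{y})\in\bK_*$. The advantage of placing the new ``$\overline{y}$'' inside the residue section is that one can directly invoke res-constructibility of $\bE_*$ over a larger residue-section-like subfield to get the residually-generated property essentially for free.

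First I would construct $\psi$. Complete $\std_{\bK_*}(\overline{x},\overline{y})$ to a $\dcl_T$-basis of $\bK_*$ over $\bK$ by a tuple $\overline{r}'$. Since $\bE_*=\bE\langle\overline{r}\rangle$ and $\bK_*=\bK\langle\overline{r}\rangle$, one has $\bE_*=\bE\langle\bK_*\rangle$, so $(\std_{\bK_*}(\overline{x}),\std_{\bK_*}(\overline{y}),\overline{r}')$ is a $\dcl_T$-basis of $\bE_*$ over $\bE$. Define $\psi$ on these generators by fixing $\bE$, $\std_{\bK_*}(\overline{x})$ and $\overline{r}'$, and sending $\std_{\bK_*}(\overline{y})\mapsto\overline{y}$. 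Since $\res(\overline{y})=\res(\std_{\bK_*}(\overline{y}))$, the tuple $(\std_{\bK_*}(\overline{x}),\overline{y},\overline{r}')$ is also an $\cO_*$-res-construction over $\bE$ whose residue field equals $\res(\bE_*)$, so Lemma~\ref{lem:res_constructible_ext_iso_type} applied to the identity on $\res(\bE_*)$ yields that $\psi$ extends to an elementary self-embedding of $(\bE_*,\cO_*)$. Because $\psi$ fixes the finite-codimensional subfield $\bE\langle\std_{\bK_*}(\overline{x}),\overline{r}'\rangle$, it must be an automorphism.

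Next I would transport the hypothesis. By $\bK_*$-strongness $\overline{x}\subseteq\bE\langle\std_{\bK_*}(\overline{x})\rangle$, so $\psi^{-1}$ fixes $\overline{x}$ elementwise. By Lemma~\ref{lem:invariance}, $\overline{x}\triangleleft_{\bK_*}\psi^{-1}(\bE_1)$, and by elementarity $\psi^{-1}(\overline{y})=\std_{\bK_*}(\overline{y})$ is a res-construction in $\psi^{-1}(\bE_1)$ over $\bE\langle\overline{x}\rangle$. To conclude via Lemma~\ref{lem:K_*orthogonal} applied to this transported setup, it suffices to show that $\psi^{-1}(z)$ is residually generated over $\bE\langle\std_{\bK_*}(\overline{x}),\std_{\bK_*}(\overline{y})\rangle$, after which the lemma yields residual generation over $\bE\langle\overline{x},\std_{\bK_*}(\overline{y})\rangle=\psi^{-1}(\bE\langle\overline{x},\overline{y}\rangle)$ and applying $\psi$ (which preserves residual generation) produces the conclusion. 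For the required residual generation, note that $\overline{r}'$ witnesses that $\bE_*$ is res-constructible over $\bE\langle\std_{\bK_*}(\overline{x}),\std_{\bK_*}(\overline{y})\rangle$, so by Lemma~\ref{lem:res_implies_no_weak_elements} every element of $\bE_*$ outside this subfield is residually generated over it.

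The crux therefore reduces to verifying $\psi^{-1}(z)\notin\bE\langle\std_{\bK_*}(\overline{x}),\std_{\bK_*}(\overline{y})\rangle$, equivalently, applying $\psi$, that $z\notin\bE\langle\std_{\bK_*}(\overline{x}),\overline{y}\rangle$. This is the step I expect to require most care: the hypothesis only gives $z\notin\bE\langle\overline{x},\overline{y}\rangle$, and I would deduce the needed non-membership from the equality $\bE\langle\overline{x},\overline{y}\rangle=\bE_1\cap\bE\langle\std_{\bK_*}(\overline{x}),\overline{y}\rangle$, an enriched form of $\bK_*$-strongness. This equality should follow from $\bK_*$-orthogonality of $\overline{x}$ in $\bE_1$ together with $\overline{y}\subseteq\bE_1$: the type of $\std_{\bK_*}(\overline{x})$ over $\bE\langle\overline{x},\overline{y}\rangle$ implies its type over $\bE_1$, which forces any element of $\bE_1$ that is $\bE$-definable from $\std_{\bK_*}(\overline{x})$ and $\overline{y}$ to already be $\bE$-definable from $\overline{x}$ and $\overline{y}$. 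Once this is secured, the remaining chain of implications is bookkeeping through $\psi$.
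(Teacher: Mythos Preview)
Your proposal is correct and follows essentially the same route as the paper: construct the automorphism $\psi$ of $(\bE_*,\cO_*)$ swapping $\std_{\bK_*}(\overline{y})$ with $\overline{y}$ while fixing $\bE$, $\std_{\bK_*}(\overline{x})$ and the complementary basis $\overline{r}'$, then combine Lemma~\ref{lem:invariance}, Lemma~\ref{lem:res_implies_no_weak_elements}, and Lemma~\ref{lem:K_*orthogonal}. Your handling of the non-membership $\psi^{-1}(z)\notin\bE\langle\std_{\bK_*}(\overline{x}),\std_{\bK_*}(\overline{y})\rangle$ via the enriched strongness identity $\bE\langle\overline{x},\overline{y}\rangle=\bE_1\cap\bE\langle\std_{\bK_*}(\overline{x}),\overline{y}\rangle$ (which indeed follows from $\bK_*$-orthogonality by the Tarski--Vaught style argument you sketch) makes explicit a step the paper leaves tacit.
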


As a Corollary of Lemma~\ref{lem:toname}, we get that if $\overline{x}\triangleleft_{\bK_*} \bE_1$, then for all $\bE_0$ such that $\bE\langle \overline{x}\rangle \preceq \bE_0\preceq \bE_1$ and $\dim_{\dcl}(\bE_0/\bE\langle \overline{x}\rangle)\le \aleph_0$, we have that $\bE_0$ is $\cO_*$-res-constructible over $\bE\langle \overline{x} \rangle$. We give a more precise statement of this.

\begin{corollary}\label{cor:toname1}
    Suppose that $\overline{x}\trianglelefteq_{\bK_*} \bE_1$ and $\overline{y}\coloneqq (y_i: i< \alpha)$ in $\bE_1$ is $\dcl_T$-independent over $\bE \langle \overline{x}\rangle$ with $\alpha \le \omega$. Then there is a $\cO_*$-res-construction $\overline{z}\coloneqq (z_i : i < \alpha)$ in $\bE \langle \overline{x}, \overline{y}\rangle$ over $\bE \langle \overline{x}\rangle$, such that for all $n\le \alpha$, $\bE \langle \overline{x}, x_i : i <n \rangle = \bE \langle \overline{x}, y_i: i < n\rangle$. 
    \begin{proof}
        By induction, using  Lemma~\ref{lem:toname}, we can build a $\cO_*$-res-construction $(z_i: i< \alpha)$ over $\bE \langle \overline{x}\rangle$ such that for all $n<\alpha$, we have $\bE \langle \overline{x} , z_i: i<n\rangle = \bE \langle \overline{x}, y_i: i<n\rangle$.
    \end{proof}
\end{corollary}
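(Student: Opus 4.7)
The plan is a straightforward transfinite induction on $n < \alpha$, using Lemma~\ref{lem:toname} at each successor step to produce $z_n$ from $y_n$, and exploiting the fact that $\alpha \le \omega$ to avoid limit-step complications within the construction itself. The two invariants I would maintain at stage $n$ are: (a) $(z_i : i < n)$ is an $\cO_*$-res-construction over $\bE\langle \overline{x}\rangle$, and (b) $\bE\langle \overline{x}, z_i : i < n \rangle = \bE\langle \overline{x}, y_i : i < n\rangle$. Both hold vacuously for $n = 0$.

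For the successor step from $n$ to $n+1$, by (b) and the $\dcl_T$-independence of $\overline{y}$ over $\bE\langle \overline{x}\rangle$ we have $y_n \in \bE_1 \setminus \bE\langle \overline{x}, z_i : i < n \rangle$. Since $(z_i : i<n)$ is a finite $\cO_*$-res-construction over $\bE\langle \overline{x}\rangle$ by (a), and $\overline{x} \triangleleft_{\bK_*} \bE_1$ by hypothesis, Lemma~\ref{lem:toname} applies (with $(z_i: i<n)$ playing the role of the finite res-construction $\overline{y}$ in that lemma) and yields that $y_n$ is $\cO_*$-residually generated over $\bE\langle \overline{x}, z_i : i < n \rangle$. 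Unpacking the definition, this produces an element $z_n \in \cO_*$ with $\res(z_n) \notin \res\bigl(\bE\langle \overline{x}, z_i : i < n\rangle\bigr)$ and $\bE\langle \overline{x}, z_i : i<n, y_n\rangle = \bE\langle \overline{x}, z_i : i < n+1\rangle$. Invariant (a) at $n+1$ follows from (a) at $n$ together with the residuality of $\res(z_n)$, using the ordinal characterisation of res-constructions in Remark~\ref{rmk:res-constructibility_and_basis_orderings}; invariant (b) at $n+1$ follows by combining the inductive (b) at $n$ with the equality just displayed.

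If $\alpha < \omega$, we are done after $\alpha$ steps. If $\alpha = \omega$, the sequence $\overline{z} := (z_i : i < \omega)$ is the direct union of the finite pieces; it is an $\cO_*$-res-construction over $\bE\langle \overline{x}\rangle$ because $\dcl_T$-independence of $\bigl(\res(z_i) : i < \omega\bigr)$ over $\res(\bE\langle \overline{x}\rangle)$ is a property of all finite subtuples, each of which is a res-construction by (a); and the required closure equality at $n = \omega$ is immediate by taking unions in (b). The only thing to emphasise is that the hypothesis $\overline{x} \triangleleft_{\bK_*} \bE_1$ is a fixed property of $\overline{x}$ and $\bE_1$, so it remains available at every inductive step without needing to be re-established — this is precisely why Lemma~\ref{lem:toname} can be invoked repeatedly with the same $\overline{x}$.
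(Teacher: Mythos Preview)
Your proof is correct and follows exactly the same approach as the paper's: an induction on $n < \alpha$ applying Lemma~\ref{lem:toname} at each successor step to show $y_n$ is residually generated over $\bE\langle \overline{x}, z_i : i < n\rangle$, thereby producing $z_n$. You have simply spelled out in detail what the paper compresses into one sentence, including the routine union argument for the case $n = \alpha = \omega$.
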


$\bK_*$-strong res-constructions are closed under directed unions.

\begin{lemma}\label{lem:strong_dir_unions}
    Let $\overline{x}\coloneqq (x_i : i \in I)$ be family of elements in $\cO_* \cap \bE_1$. If for all finite $F \subseteq I$, $\overline{x}_{F}\coloneqq (x_i : i\in F)\triangleleft_{\bK_*}\bE_1$, then $\overline{x} \triangleleft_{\bK_*} \bE_1$.
    \begin{proof}
        The fact that $\overline{x}$ is a $\cO_*$-res/construction follows from the finite character of $\dcl_T$-independence. Similarly, that $\overline{x}$ is $\bK_*$-orthogonal follows from the fact that every formula in $\tp(\std_{\bK_*}(\overline{x})/\bE_1)$ is in fact in some $\tp(\std_{\bK_*}(\overline{x}_{F})/\bE_1)$ for some finite $F \subseteq I$.
        Finally $\bE \langle \overline{x} \rangle = \bigcup_{F} \bE \langle \overline{x}_F \rangle = \bE_1 \cap \bigcup_{F} \bE \langle \std_{\bK_*}(\overline{x})\rangle$, where in the unions $F$ ranges among the finite subsets of $I$.
    \end{proof}
\end{lemma}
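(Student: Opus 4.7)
The plan is to verify the three clauses of $\bK_*$-strongness from Definition~\ref{def:strong} one at a time, each time reducing to the finite case via a finite-character argument. No clause will require more than bookkeeping, and in fact the statement of the lemma already hints at the argument.

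First I would check that $\overline{x}$ is a $\cO_*$-res-construction. The property that $(\res_{\cO_*}(x_i) : i \in I)$ be $\dcl_T$-independent over $\res(\bE,\cO)$ has finite character, since $\dcl_T$ is a finitary closure operator, and so follows immediately from the hypothesis that each $\overline{x}_F$ is a res-construction. Second, for the $\bK_*$-orthogonality clause, I would note that any $L_\convex$-formula $\varphi(\std_{\bK_*}(\overline{x}))$ belonging to $\tp_{L_\convex}(\std_{\bK_*}(\overline{x})/\bE_1)$ mentions only finitely many coordinates, say those indexed by some finite $F\subseteq I$; by hypothesis $\overline{x}_F\triangleleft_{\bK_*}\bE_1$, so $\varphi$ is already implied by $\tp(\std_{\bK_*}(\overline{x}_F)/\bE\langle \overline{x}_F\rangle)$, which in turn is contained in $\tp(\std_{\bK_*}(\overline{x})/\bE\langle \overline{x}\rangle)$.

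Third, I would establish the identity $\bE\langle \overline{x}\rangle = \bE_1 \cap \bE\langle \std_{\bK_*}(\overline{x})\rangle$. The inclusion $\subseteq$ follows from $\overline{x}\subseteq \bE_1$ together with the containment $\overline{x}\subseteq \bE\langle\std_{\bK_*}(\overline{x})\rangle$ already implicit in the finite-stage hypothesis. For the reverse inclusion, given $y\in \bE_1\cap \bE\langle \std_{\bK_*}(\overline{x})\rangle$, finite character of $\dcl_T$ produces a finite $F\subseteq I$ with $y\in \bE\langle \std_{\bK_*}(\overline{x}_F)\rangle$, so that $y\in \bE_1\cap \bE\langle \std_{\bK_*}(\overline{x}_F)\rangle =\bE\langle \overline{x}_F\rangle \subseteq \bE \langle \overline{x}\rangle$ by the assumed strongness of $\overline{x}_F$.

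I do not anticipate any real obstacle: the lemma is essentially a formal consequence of the finite character of $\dcl_T$-independence, of types, and of definable closure, applied to the three conditions that make up $\triangleleft_{\bK_*}$. The only point worth flagging is the third clause, where one must remember to intersect with $\bE_1$ before taking the directed union, but this follows directly from the finite-character observation above.
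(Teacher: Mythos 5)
Your proof is correct and follows the same route as the paper's: all three conditions of $\triangleleft_{\bK_*}$ are verified by reduction to the finite case via the finite character of $\dcl_T$-independence, of types, and of $\dcl_T$ itself. Your treatment of the third clause is simply a slightly more spelled-out version of the paper's one-line union identity.
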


\begin{lemma}
    Suppose that $\val_{\cO_*}(\bE_1)$ contains no uncountable well ordered set and $\std_{\bK_*}(\bE_1)=\bK_*$. Let $\bE \preceq \bE_0 \preceq \bE_1$ and $\overline{y}$ be a finite $\cO_*$-res-construction in $\bE_1$ over $\bE_0$.
    Then there is a countable tuple $\overline{z}$ in $\bE_1$ such that $\tp_{L_{\convex}}(\std_{\bK_*}(\overline{y})/\bE_0\langle \overline{z} \rangle) \vdash \tp_{L_{\convex}}(\std_{\bK_*}(\overline{y})/\bE_1)$.
    \begin{proof}
        Notice that we can reduce to the case in which $y$ is a single residual element.
        Let $s=\std_{\bK_*}(y)$. We can assume that $s \notin \bE_1$ for otherwise the thesis is trivial. 
        Since $\std_{\bK_*}(\bE_1)=\bK_*$, $s$ is not residually generated over $\bE_1$, so the thesis follows from the fact that $\val_{\cO_*}(\bE_1)$ contains no uncountable well ordered subset and from Lemma~\ref{lem:countable_residual_cuts}.
    \end{proof}
\end{lemma}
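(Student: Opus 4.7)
The plan is to carry out the reduction suggested in the statement: first handle the case where $\overline{y}=(y)$ is a single residual element by invoking Lemma~\ref{lem:countable_residual_cuts}, then obtain the finite tuple case by induction on the length of $\overline{y}$.

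For the base case, set $s=\std_{\bK_*}(y)\in\bK_*$. If $s\in\bE_1$ we may take $\overline{z}=(s)$; otherwise, observe that by Fact~\ref{fact:res_iso_max}(3) the standard-part map $\std_{\bK_*}$ identifies $\res(\bE_*,\cO_*)$ with $\bK_*$, so the hypothesis $\std_{\bK_*}(\bE_1)=\bK_*$ forces $\res(\bE_1,\cO_*\cap\bE_1)=\res(\bE_*,\cO_*)$. Consequently no element of $\bE_*\setminus\bE_1$, in particular not $s$, can be $\cO_*$-residually generated over $(\bE_1,\cO_*\cap\bE_1)$. Since $\val_{\cO_*}(\bE_1)$ contains no uncountable well-ordered subset (equivalently, no uncountable ordinal), Lemma~\ref{lem:countable_residual_cuts} applied to $s$ in the extension $(\bE_1,\cO_*\cap\bE_1)\prec(\bE_*,\cO_*)$ delivers a countable $\overline{z}\subseteq\bE_1$ with $\tp_{L_\convex}(s/\overline{z})\vdash\tp_{L_\convex}(s/\bE_1)$, whence a fortiori $\tp_{L_\convex}(s/\bE_0\langle\overline{z}\rangle)\vdash\tp_{L_\convex}(s/\bE_1)$.

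For the induction on the length of $\overline{y}=(y_1,\ldots,y_n)$, given a countable $\overline{z}'\subseteq\bE_1$ that handles the prefix $\overline{y}'=(y_1,\ldots,y_{n-1})$ by the inductive hypothesis, I would apply the base-case argument to $s_n=\std_{\bK_*}(y_n)$ working in the extension $(\bE_1\langle s_1,\ldots,s_{n-1}\rangle,\cO_*\cap\bE_1\langle s_1,\ldots,s_{n-1}\rangle)\prec(\bE_*,\cO_*)$, yielding a further countable $\overline{z}''$. A routine formula-splitting argument, exploiting that any $L_\convex$-formula over $\bE_1$ in the variables of the full tuple factors as a formula in the prefix variables with parameters in $\bE_1$ conjoined with a formula in $x_n$ with parameters in $\bE_1$ and the prefix variables, combines the two implications into the desired one for $\overline{y}$ over $\bE_0\langle\overline{z}'\cup\overline{z}''\rangle$.

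The base case is the conceptual core. The only nontrivial point in the inductive step is that the hypotheses of the lemma must still hold after adjoining $s_1,\ldots,s_{n-1}$ to $\bE_1$: the condition $\std_{\bK_*}(\cdot)=\bK_*$ is immediate, while preservation of the no-uncountable-well-ordered-subset property of the value group under a finite $\dcl_T$-extension can be verified via an Abhyankar-type rank bound, together with the observation that any uncountable well-ordered subset of the enlarged value group would, after projecting along the finitely many new generators modulo $\val_{\cO_*}(\bE_1)$, yield an uncountable well-ordered subset of $\val_{\cO_*}(\bE_1)$.
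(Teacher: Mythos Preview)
Your base case is correct and is exactly the paper's explicit argument: reduce to a single $y$, set $s=\std_{\bK_*}(y)$, note that $\std_{\bK_*}(\bE_1)=\bK_*$ forces $\res(\bE_1,\cO_*\cap\bE_1)=\res(\bE_*,\cO_*)$ so $s$ is not residually generated over $\bE_1$, and invoke Lemma~\ref{lem:countable_residual_cuts}. The paper says nothing more than ``notice that we can reduce to the case in which $y$ is a single residual element'' for the passage from finite tuples to singletons.

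Where you go beyond the paper is in spelling that reduction out, and here there is a genuine gap. Your induction reapplies the base case to $s_n$ over the enlarged field $\bE_1\langle s_1,\ldots,s_{n-1}\rangle$, which requires the value-group hypothesis for \emph{that} field. Your justification via an ``Abhyankar-type rank bound'' fails in the generality of the paper: when $T$ is exponential there is no such bound on value-group growth under a single $\dcl_T$-extension (adjoining one element can create infinitely many new Archimedean classes; cf.\ Remark~\ref{rmk:exponential_ctbl_condition}). Even in the power-bounded case, where Fact~\ref{fact:rv-property} does give $\val(\bE_1\langle s\rangle)=\val(\bE_1)+\Lambda v$, your projection argument is not right: $\val(\bE_1)$ is typically not convex in the larger group, so the quotient carries no order, and the image of a well-ordered $W$ in $\Lambda^{n-1}$ need not be well-ordered nor countable when $\Lambda$ is uncountable. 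Note that in the downstream application (Theorem~\ref{thm:mainthm}) one assumes the no-uncountable-well-order condition on $\val_{\cO_*}(\bE_*)$ itself, and since $\bE_1\langle s_1,\ldots,s_{n-1}\rangle\subseteq\bE_*$ that stronger hypothesis would rescue your induction immediately; but the lemma as stated only assumes the condition on $\bE_1$, and your reduction does not establish it under that weaker hypothesis.
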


\begin{proposition}\label{prop:extending_strong}
    Let $\overline{x}\triangleleft_{\bK_*} \bE_1$ and let $B$ be a $\dcl_T$-basis of $\bE_1$ over $\bE \langle \overline{x}\rangle$. Suppose that $\val_{\cO_*}(\bE_1)$ contains no uncountable ordinal and that $\std_{\bK_*}(\bE_1)=\bK_*$. If $B$ is infinite, then every $b_0 \in B$ extends to a countable sequence $(b_i: i <\omega)$ in $B$ such that there is $\overline{x}'\coloneqq (x'_i: i < \omega)$ with $(\overline{x},\overline{x}') \trianglelefteq_{\bK_*} \bE_1$ and $\bE \langle \overline{x} ,x_i' :i<n\rangle=\bE \langle \overline{x}, b_i: i<n\rangle$ for all $n\le \omega$.
    \begin{proof}
        Consider functions $f, g: [B]^{<\aleph_0} \to [B]^{<\aleph_1}$ defined as follows. 
        \begin{itemize}
            \item given a finite subset $S\subseteq B$, let $S'$ be a $\dcl_T$-basis of $\std_{\bK_*}(\bE \langle \overline{x}, S\rangle)$ over $\std_{\bK_*}(\bE \langle \overline{x} \rangle)$. Set $f(S)$ to be a countable subset of $B$ such that
            \[\tp_{L_{\convex}}(S'/\bE\langle \overline{x}, f(S) \rangle)\vdash \tp_{L_{\convex}}(S'/\bE_1);\]
            \item given a finite subset $S\subseteq B$, choose a finite $R\subseteq \bK_*$ such that $S\subseteq \bE\langle R\rangle$, then set $g(S)$ to be such that $R \subseteq \std_{\bK_*}(\bE \langle \overline{x}, g(S)\rangle)$ and $\bE \langle \std_{\bK_*}(\overline{x}) , R\rangle \cap \bE_1 \subseteq \bE \langle \overline{x}, g(S) \rangle$ (notice that one can choose a finite such $g(S)$).
        \end{itemize}
        By a standard diagonalization argument, given $b_0 \in B$ there is a countable subset $\{b_i: i < \omega\}$ of $B$ closed under $f$ and $g$. Let $\overline{x}'$ be a res-construction of $\bE \langle \overline{x}, b_i: i< \omega\rangle$ over $\bE \langle \overline{x} \rangle$. To see it is $\bK_*$-orthogonal notice that $\tp(\std_{\bK_*}(\overline{x}')/\bE \langle \overline{x}, \overline{x}' \rangle)\vdash \tp(\std_{\bK_*}(\overline{x}')/\bE_1)$ by compactness and closure under $f$. To see it is $\bK_*$-strong, notice that $\bE \langle \overline{x}, \overline{x}' \rangle \subseteq \bE \langle \std_{\bK_*}(\overline{x}), \std_{\bK_*}(\overline{x}')\rangle$ and $\bE \langle \std_{\bK_*}(\overline{x}), \std_{\bK_*}(\overline{x}')\rangle \cap \bE_1 \subseteq \bE \langle \overline{x}, \overline{x}' \rangle$ by closure under $g$.
    \end{proof}
\end{proposition}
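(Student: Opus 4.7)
The plan is a standard diagonal closure argument: build a countable subset $\{b_i : i < \omega\} \subseteq B$ containing the prescribed $b_0$ and closed under two auxiliary operations $f, g : [B]^{<\aleph_0} \to [B]^{<\aleph_1}$, where closure under $f$ will force $\bK_*$-orthogonality and closure under $g$ will force the intersection clause in the definition of $\bK_*$-strongness. Once such a set is in hand, Corollary~\ref{cor:toname1} applied to $\overline{y} := (b_i : i < \omega)$ over $\bE\langle \overline{x}\rangle$ produces a $\cO_*$-res-construction $\overline{x}' = (x_i' : i < \omega)$ with $\bE\langle \overline{x}, x_i' : i<n\rangle = \bE\langle \overline{x}, b_i : i<n\rangle$ for all $n \le \omega$, and only the verification of $\bK_*$-strongness of $(\overline{x}, \overline{x}')$ in $\bE_1$ will remain.

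To define $f$, given finite $S \subseteq B$ choose a finite $\dcl_T$-basis $S'$ of $\std_{\bK_*}(\bE\langle \overline{x}, S\rangle)$ over $\std_{\bK_*}(\bE\langle \overline{x}\rangle)$; since $S'$ is itself a finite $\cO_*$-res-construction in $\bE_1$ over $\bE\langle \overline{x}\rangle$, the preceding lemma (the point at which the hypotheses $\std_{\bK_*}(\bE_1) = \bK_*$ and that $\val_{\cO_*}(\bE_1)$ contains no uncountable well-ordered subset are consumed) provides a countable tuple in $\bE_1$ whose type over $\bE\langle \overline{x}\rangle$ determines that of $S'$ over $\bE_1$; expressing this tuple through $B$ yields the countable $f(S) \subseteq B$. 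To define $g$, pick a finite $R \subseteq \bK_*$ with $S \subseteq \bE\langle R\rangle$: the intersection $\bE\langle \std_{\bK_*}(\overline{x}), R\rangle \cap \bE_1$ has finite $\dcl_T$-dimension over $\bE\langle \overline{x}\rangle$, so, using $\std_{\bK_*}(\bE_1) = \bK_*$ and that $B$ is a basis of $\bE_1$ over $\bE\langle \overline{x}\rangle$, finitely many elements of $B$ suffice to contain both $R$ modulo $\std_{\bK_*}(\bE\langle \overline{x}\rangle)$ and the above intersection; take $g(S)$ to be such a finite set.

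A routine $\omega$-step diagonalisation starting from $\{b_0\}$ produces $\{b_i : i < \omega\} \subseteq B$ closed under both $f$ and $g$. Concatenation of res-constructions gives that $(\overline{x}, \overline{x}')$ is a $\cO_*$-res-construction. For $\bK_*$-orthogonality, any $L_\convex$-formula in $\tp(\std_{\bK_*}(\overline{x}, \overline{x}')/\bE_1)$ uses only finitely many coordinates of $\std_{\bK_*}(\overline{x}')$, hence factors through some $f(S)$, and closure under $f$ together with compactness yield the required implication of types. For the strongness equality, the inclusion $\bE\langle \overline{x}, \overline{x}'\rangle \subseteq \bE_1 \cap \bE\langle \std_{\bK_*}(\overline{x}, \overline{x}')\rangle$ is immediate, and conversely any element of the right-hand side lies in $\bE\langle \std_{\bK_*}(\overline{x}), R\rangle \cap \bE_1$ for some finite $R \subseteq \std_{\bK_*}(\overline{x}')$, which by closure under $g$ sits inside $\bE\langle \overline{x}, \overline{x}'\rangle$.

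The main obstacle is setting up $f$ and $g$ simultaneously so that they have countable, respectively finite, outputs and jointly encode both $\bK_*$-orthogonality and the intersection clause; the design of $f$ is where the preceding lemma is invoked, and is the sole place where the two hypotheses on $\val_{\cO_*}(\bE_1)$ and $\std_{\bK_*}(\bE_1)$ enter. Once the two operations are in place, the diagonal closure and the final two verifications are formal.
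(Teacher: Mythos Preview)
Your proposal is correct and follows essentially the same route as the paper: define $f$ and $g$ exactly as in the paper, close off a countable subset of $B$ by diagonalisation, and verify $\bK_*$-orthogonality via $f$ and the intersection clause via $g$; you even make explicit the appeal to Corollary~\ref{cor:toname1} to get the equality $\bE\langle \overline{x}, x_i':i<n\rangle=\bE\langle \overline{x}, b_i:i<n\rangle$ for all $n\le\omega$, which the paper leaves implicit. One small slip: $S'$ lives in $\bK_*$, not in $\bE_1$, so strictly speaking the preceding lemma should be applied with $\overline{y}$ a res-construction of $\bE\langle \overline{x}, S\rangle$ over $\bE\langle\overline{x}\rangle$ inside $\bE_1$ (its $\std_{\bK_*}$-image is then interdefinable with $S'$), but this does not affect the argument.
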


An immediate Corollary of Proposition~\ref{prop:extending_strong}, is the following, which amounts to the first part of our main theorem, but with extra assumption on $\bE_1$ that $\std_{\bK_*}(\bE_1)=\bK_*$.

\begin{corollary}\label{cor:mainthm_with_extra_hyp}
    Suppose that $\overline{x} \triangleleft_{\bK_*} \bE_1$, $\val_{\cO_*}(\bE_1)$ contains no uncountable ordinal, and $\std_{\bK_*}(\bE_1)=\bK_*$. Then for every $\dcl_T$-basis $B$ of $\bE_1$ over $\bE\langle \overline{x} \rangle$, there is an ordinal-indexing $(b_i: i <\lambda)$ of $B$, such that for all $i<\lambda$, $\res(\bE \langle \overline{x}, b_j: j<i\rangle) \neq \res(\bE \langle \overline{x}, b_j: j\le i\rangle)$.
    \begin{proof}
        Using Proposition~\ref{prop:extending_strong} and Lemma~\ref{lem:strong_dir_unions} one can find some ordinal $\alpha$, $\overline{x}'\coloneqq (x_i': i< \alpha)$ in $\bE_1$ and $(b_i: i< \alpha)$ in $B$, such that $\{b_i: i <\alpha\}$ is cofinite in $B$, $(\overline{x}, \overline{x}')\triangleleft_{\bK_*} \bE_1$, and for all $i\le\alpha$, $\bE \langle \overline{x}, x_j':j<i\rangle = \bE \langle \overline{x}, b_j:j<i\rangle$. To conclude it suffices to invoke Corollary~\ref{cor:toname1} with $\overline{y}$ an enumeration of the finite set $B \setminus \{b_i : i< \alpha\}$.
    \end{proof}
\end{corollary}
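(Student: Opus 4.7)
The plan is transfinite recursion along $B$: at successor stages use Proposition~\ref{prop:extending_strong} to absorb countable blocks of $B$ into a growing $\bK_*$-strong res-construction, at limit stages invoke Lemma~\ref{lem:strong_dir_unions} to preserve $\bK_*$-strongness, and conclude with a single application of Corollary~\ref{cor:toname1} on the finite residual block.

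More concretely, I would build pairwise disjoint countable subsets $B^\delta \subseteq B$ together with countable tuples $\overline{x}'^\delta$ in $\bE_1$ indexed by ordinals $\delta < \gamma$, each pair related by the prefix-matching bijection guaranteed by Proposition~\ref{prop:extending_strong}. At stage $\beta$ the concatenation $\overline{x}'^{<\beta} := (\overline{x}'^\delta)_{\delta < \beta}$ satisfies $(\overline{x}, \overline{x}'^{<\beta}) \triangleleft_{\bK_*} \bE_1$ by induction, using Lemma~\ref{lem:strong_dir_unions} at limit stages and the previous application of Proposition~\ref{prop:extending_strong} at successors. Since $B \setminus \bigcup_{\delta<\beta} B^\delta$ is a $\dcl_T$-basis of $\bE_1$ over $\bE\langle \overline{x}, \overline{x}'^{<\beta}\rangle$, whenever it is infinite I can pick an arbitrary $b_0$ in it and apply Proposition~\ref{prop:extending_strong} to produce the next block $B^\beta$ (a countable subset of the residual basis containing $b_0$) and the matching $\overline{x}'^\beta$. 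Since each step consumes countably many elements of $B$, there must be a least $\gamma$ at which the residual $F := B \setminus \bigcup_{\delta < \gamma} B^\delta$ is finite, and the recursion stops there.

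Setting $\overline{x}' := (\overline{x}'^\delta)_{\delta < \gamma}$, Lemma~\ref{lem:strong_dir_unions} yields $(\overline{x}, \overline{x}') \triangleleft_{\bK_*} \bE_1$. Now $F$ is a finite $\dcl_T$-independent tuple over $\bE\langle \overline{x}, \overline{x}'\rangle$, so Corollary~\ref{cor:toname1} applied to the strong tuple $(\overline{x}, \overline{x}')$ with $\overline{y}$ any enumeration of $F$ supplies a finite res-construction $\overline{z}$ in $\bE_1$ over $\bE\langle \overline{x}, \overline{x}'\rangle$ with the required prefix-matching. The desired ordinal indexing of $B$ is obtained by concatenating the chosen enumerations of the $B^\delta$ in order of $\delta$ and appending $F$; the associated res-construction is $(\overline{x}', \overline{z})$, and the strict growth of the residue field at every stage of this indexing of $B$ then follows directly from Remark~\ref{rmk:res-constructibility_and_basis_orderings}.

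The main obstacle is propagating $\bK_*$-strongness through the transfinite recursion. The condition has two components — model-theoretic orthogonality of the standard parts over $\bE_1$ and the intersection identity $\bE\langle \overline{x}\rangle = \bE_1 \cap \bE\langle \std_{\bK_*}(\overline{x})\rangle$ — and both must survive directed unions; this is exactly what Lemma~\ref{lem:strong_dir_unions} provides, and is the reason it is set up in the form given. A smaller subtlety is that Proposition~\ref{prop:extending_strong} only absorbs a countable block of $B$ at a time, so the transfinite bookkeeping, together with Corollary~\ref{cor:toname1} for the finite tail, is essential to stitch the whole basis into a single res-construction rather than produce the conclusion in one sweep.
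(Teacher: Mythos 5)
Your proposal is correct and matches the paper's argument: it is the same two-step strategy of (a) a transfinite recursion that absorbs countable blocks of $B$ via Proposition~\ref{prop:extending_strong}, using Lemma~\ref{lem:strong_dir_unions} to pass through limit stages, so as to produce a $\bK_*$-strong $(\overline{x},\overline{x}')$ with prefix-matching against a cofinite subset of $B$, followed by (b) a single application of Corollary~\ref{cor:toname1} to the remaining finite tail. You have simply unpacked the recursion (the disjoint blocks $B^\delta$, the stopping time $\gamma$) that the paper compresses into ``Using Proposition~\ref{prop:extending_strong} and Lemma~\ref{lem:strong_dir_unions} one can find\ldots''.
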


We will use the following Lemma to remove the extra assumption from the above Corollary.

\begin{lemma}\label{lem:toname2}
    Let $(b_i : i <\alpha)$ be an ordinal-indexed sequence of elements of $\bE_*$.
    Let $\overline{r}$ be a res-construction over $\bE \langle b_i : i <\alpha\rangle$ and suppose that for all $\beta<\alpha$, $\res(\bE \langle \overline{r}, b_i: i< \beta \rangle)\neq \res(\bE \langle \overline{r}, b_i: i\le \beta\rangle)$. Then for all $\beta<\alpha$, $\res(\bE \langle b_i: i< \beta \rangle)\neq \res(\bE \langle b_i: i\le \beta\rangle)$. In particular $\bE \langle b_i: i< \alpha\rangle$ is $\cO_*$-res-constructible over $\bE$.
    \begin{proof}
        Note that if the hypotheses hold for $\alpha$, they also hold for any $\alpha' < \alpha$. Therefore, by induction, we may assume the statement holds for all $\alpha'<\alpha$.
        If $\alpha$ is a limit ordinal, then there is nothing to show.
        Suppose thus that $\alpha=\alpha'+1$. 
        Notice that $\overline{r}$ is in particular also an $\cO_*$-res-construction over $\bE \langle b_i: i< \alpha'\rangle$, so by the inductive hypothesis $\res(\bE \langle b_i: i<\beta\rangle) \neq \res(\bE \langle b_i: i \le \beta \rangle)$ for all $\beta<\alpha'$ and we only need to show that $\res(\bE \langle b_i: i <\alpha' \rangle) \neq \res(\bE \langle b_i: i\leq \alpha'\rangle)$, i.e.\ that $b_{\alpha'}$ is $\cO_*$-residually generated over $\bE \langle b_i: i <\alpha' \rangle$.
        Notice that $b_{\alpha'}$ is $\cO_*$-residually generated over $\bE \langle \overline{r}, b_i: i<\alpha'\rangle$ by hypothesis, so there is a finite $\overline{r}'\subseteq \overline{r}$, a finite $F \subseteq \alpha'$, and an $\bE$-definable function $f$, such that $\res(f(\overline{r}', b_F)) \notin \res(\bE \langle \overline{r}, b_i: i <\alpha'\rangle)$.
        Since then $\dim_{\dcl}\big( \res(\bE \langle \overline{r}', b_i: i <\alpha' \rangle)/\res(\bE\langle b_i: i< \alpha'\rangle)\big) = |\overline{r}'|$ and $\dim_{\dcl}\big( \res(\bE \langle \overline{r}', b_i: i \leq\alpha' \rangle)/\res(\bE\langle b_i: i< \alpha'\rangle)\big) = |\overline{r}'|+1$ it follows by a dimension argument that $\res(\bE \langle b_i :i< \alpha'\rangle)\neq \res(\bE \langle b_i: i <\alpha\rangle)$. 
    \end{proof}
\end{lemma}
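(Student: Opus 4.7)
The plan is to prove this by transfinite induction on $\alpha$. The key observation is that if $\overline{r}$ is a res-construction over $\bE\langle b_i: i<\alpha\rangle$, then, since $\dcl_T$-independence of residues is preserved under restricting the base field, $\overline{r}$ is automatically also a res-construction over $\bE\langle b_i: i<\alpha'\rangle$ for every $\alpha'<\alpha$. In particular, both the hypotheses and the desired conclusion carry over to any initial segment, which means I can freely invoke the induction hypothesis on $\alpha'<\alpha$.

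The limit case is trivial: any $\beta<\alpha$ is also less than some $\alpha'<\alpha$, and applying the induction hypothesis to $\alpha'$ handles $\beta$. So the real content is the successor case $\alpha=\alpha'+1$. By the induction hypothesis applied at $\alpha'$, the conclusion holds for all $\beta<\alpha'$, so all that remains is to show that $\res(\bE\langle b_i: i<\alpha'\rangle)\neq \res(\bE\langle b_i: i\le \alpha'\rangle)$, i.e.\ that $b_{\alpha'}$ is $\cO_*$-residually generated over $\bE\langle b_i: i<\alpha'\rangle$.

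For this, I would argue by a dimension count. By hypothesis there is some element of $\res(\bE\langle \overline{r}, b_i: i\le \alpha'\rangle)$ not in $\res(\bE\langle \overline{r}, b_i: i<\alpha'\rangle)$; such an element is witnessed using only a finite subtuple $\overline{r}'\subseteq \overline{r}$ and a finite $F\subseteq \alpha'+1$, so in fact $\res(\bE\langle \overline{r}', b_i: i<\alpha'\rangle)\subsetneq \res(\bE\langle \overline{r}', b_i: i\le \alpha'\rangle)$. Now by Remark~\ref{rmk:res-constructions_are_independent}, since $\res(\overline{r}')$ is $\dcl_T$-independent over $\res(\bE\langle b_i: i<\alpha\rangle)=\res(\bE\langle b_i: i\le \alpha'\rangle)$, and a fortiori over $\res(\bE\langle b_i: i<\alpha'\rangle)$, adjoining $\overline{r}'$ to either of these residue fields increases $\dcl_T$-dimension by exactly $|\overline{r}'|$. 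Comparing the two towers
\[\res(\bE\langle b_i: i<\alpha'\rangle)\subseteq \res(\bE\langle \overline{r}', b_i: i<\alpha'\rangle)\subsetneq \res(\bE\langle \overline{r}', b_i: i\le \alpha'\rangle)\]
and
\[\res(\bE\langle b_i: i<\alpha'\rangle)\subseteq \res(\bE\langle b_i: i\le \alpha'\rangle)\subseteq \res(\bE\langle \overline{r}', b_i: i\le \alpha'\rangle),\]
the first has total dimension $|\overline{r}'|+1$ (using Fact~\ref{fact:res_dim_inequality} to upgrade the strict inclusion to dimension $1$), while the second must therefore also have total dimension $|\overline{r}'|+1$. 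Since the top step of the second tower contributes $|\overline{r}'|$, the middle step must contribute $1$, giving $\res(\bE\langle b_i: i<\alpha'\rangle)\neq \res(\bE\langle b_i: i\le \alpha'\rangle)$ as required.

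The main potential pitfall is simply keeping track of which field one is working over at each stage of the dimension computation; none of the steps are deep, but it is easy to conflate $\bE\langle b_i: i<\alpha'\rangle$ with $\bE\langle b_i: i\le \alpha'\rangle$ or to lose track of whether the independence of $\overline{r}'$ is being used over the correct base. The ``in particular'' conclusion that $\bE\langle b_i: i<\alpha\rangle$ is $\cO_*$-res-constructible over $\bE$ is then immediate from Remark~\ref{rmk:res-constructibility_and_basis_orderings} once the main conclusion is in hand.
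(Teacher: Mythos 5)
Your proposal is correct and takes essentially the same approach as the paper: the identical transfinite-induction setup (trivial limit step, reduction of the successor case to showing $b_{\alpha'}$ is residually generated over $\bE\langle b_i: i<\alpha'\rangle$), and the identical dimension-counting argument comparing $\res(\bE\langle \overline{r}', b_i: i<\alpha'\rangle)$ and $\res(\bE\langle \overline{r}', b_i: i\le\alpha'\rangle)$ against the base $\res(\bE\langle b_i: i<\alpha'\rangle)$. Your two-tower presentation merely makes the paper's ``dimension argument'' phrase explicit; you also write $F\subseteq\alpha'+1$ where the paper has a small slip ($F\subseteq\alpha'$), which is the more careful reading.
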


\begin{theorem}\label{thm:mainthm}
    Suppose $\bE_*\succ \bE$ is $\cO_*$-res-constructible over $\bE$, $\val_{\cO_*}(\bE_*)$ contains no uncountable well-order and $\bE_* \succ \bE_1 \succ \bE$. Then $\bE_1$ is $\cO_*$-res-constructible.
    \begin{proof}
        Let $S$ be a complement of $\std_{\bK_*}(\bE_1)$ to a dcl-basis of $\bK_*$. Set $\bE_2\coloneqq \bE_1\langle S\rangle$, $\bE_0\coloneqq \bE \langle S\rangle$ and let $B$ be a $\dcl$-basis of $\bE_1$ over $\bE$. 
        Notice that $\bE_*$ is res-constructible over $\bE_0$, $\bE_0 \prec \bE_2 \prec \bE_*$, $\bE_2=\bE_0 \langle B\rangle$, and $\std_{\bK_*}(\bE_2)=\bK_*$.
        By Corollary~\ref{cor:mainthm_with_extra_hyp}, we can find an ordinal-indexed enumeration $(b_i: i< \alpha)$ of $B$ such that $\res(\bE_0 \langle b_i: i< \beta \rangle)\neq \res(\bE_0 \langle b_i: i\le \beta\rangle)$ for all $\beta <\alpha$.
        By Lemma~\ref{lem:toname2}, it follows that $\res(\bE \langle b_i: i< \beta \rangle)\neq \res(\bE \langle b_i: i\le \beta\rangle)$ for all $\beta <\alpha$. Thus one easily sees that $\bE_1 = \bE \langle B \rangle$ is $\cO_*$-res-constructible over $\bE$.
    \end{proof}
\end{theorem}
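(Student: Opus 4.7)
The plan is to reduce to the already-established Corollary~\ref{cor:mainthm_with_extra_hyp}, whose only hypothesis not immediately provided by Theorem~\ref{thm:mainthm} is the extra assumption $\std_{\bK_*}(\bE_1) = \bK_*$. I would enforce that hypothesis by enlarging both $\bE$ and $\bE_1$ by a tuple of residue-section elements, and then undo the enlargement via Lemma~\ref{lem:toname2}.

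Concretely, choose $S \subseteq \bK_*$ completing a $\dcl_T$-basis of $\std_{\bK_*}(\bE_1)$ to a $\dcl_T$-basis of $\bK_*$, and set $\bE_0 := \bE \langle S \rangle$ and $\bE_2 := \bE_1 \langle S \rangle$. Since $S$ lies in the residue section $\bK_*$ and its residues are $\dcl_T$-independent over $\res(\bE)$, it is a $\cO_*$-res-construction over $\bE$; in particular $\bE_*$ remains res-constructible over $\bE_0$, and we have elementary inclusions $(\bE_0, \cO_* \cap \bE_0) \prec (\bE_2, \cO_* \cap \bE_2) \prec (\bE_*, \cO_*)$. By construction $\std_{\bK_*}(\bE_2) = \bK_*$, and $\val_{\cO_*}(\bE_2) \subseteq \val_{\cO_*}(\bE_*)$ still admits no uncountable well-order.

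Now fix any $\dcl_T$-basis $B$ of $\bE_1$ over $\bE$; by the exchange property, since $S$ is $\dcl_T$-independent over $\bE_1$, $B$ is also a $\dcl_T$-basis of $\bE_2$ over $\bE_0$. Applying Corollary~\ref{cor:mainthm_with_extra_hyp} to the extension $\bE_0 \prec \bE_2$ with the (vacuously $\bK_*$-strong) empty tuple yields an ordinal enumeration $(b_i : i < \alpha)$ of $B$ with $\res(\bE_0\langle b_j : j < i \rangle) \ne \res(\bE_0\langle b_j : j \leq i\rangle)$ for every $i < \alpha$.

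Finally, apply Lemma~\ref{lem:toname2} with $\overline{r} := S$, which is a res-construction over $\bE_1 = \bE \langle B \rangle$ because $\res(S)$ is $\dcl_T$-independent over $\res(\bE_1) = \std_{\bK_*}(\bE_1)$ by choice of $S$. The lemma transfers the residue-increasing property along $(b_i)$ from the base $\bE_0$ to the base $\bE$, and Remark~\ref{rmk:res-constructibility_and_basis_orderings} then exhibits $\bE_1$ as $\cO_*$-res-constructible over $\bE$. The main subtlety I anticipate is in the setup step: $S$ must simultaneously make $\bE_*$ res-constructible over $\bE_0$ (so that $\bK_*$ remains an elementary residue section above $\bE_0$, via Lemma~\ref{lem:res-constructible_residue_sections}) and serve as a res-construction over $\bE_1$ for the final descent; choosing $S$ inside $\bK_*$ and complementary to $\std_{\bK_*}(\bE_1)$ arranges both simultaneously.
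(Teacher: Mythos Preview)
Your proposal is correct and follows essentially the same route as the paper's proof: enlarge $\bE$ and $\bE_1$ by a tuple $S\subseteq\bK_*$ complementary to $\std_{\bK_*}(\bE_1)$, apply Corollary~\ref{cor:mainthm_with_extra_hyp} to the resulting tower $\bE_0\prec\bE_2\prec\bE_*$, and then descend via Lemma~\ref{lem:toname2}. You supply slightly more justification than the paper (e.g.\ the exchange argument for $B$ being a basis of $\bE_2$ over $\bE_0$, the vacuous $\bK_*$-strength of the empty tuple, and why $S$ is a res-construction over $\bE_1$), but the architecture is identical.
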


\begin{theorem}\label{thm:mainthm_negative}
     Suppose that $(\bE, \cO) \prec (\bE_*, \cO_*)$ is res-constructible and such that:
     \begin{enumerate}
         \item $\val(\bE_*)$ contains an uncountable well ordered subset;
         \item $\bE_*$ has uncountable $\dcl_T$-dimension over $\bE$. 
     \end{enumerate}
     Then there is $\bE \prec \bE_1 \prec \bE_*$ such that $\bE_1$ is not $\cO_*$-res-constructible and moreover $\res(\bE_1)=\res(\bE_*)$.
    \begin{proof}
        By the hypothesis we can find $(s_i: i < \aleph_1)$, $r$ and $(r_i : i <\aleph_1)$ in $\bE_*$ such that $\res(r)$, $\res(r_i)_{i<\aleph_1}$, $\res(s_i)_{i< \kappa}$ are a $\dcl_T$-basis of $\res(\bE_*)$ over $\res(\bE)$ and moreover $\bE_0\coloneqq  \bE \langle s_i : i< \kappa\rangle$ contains a sequence $(x_i: i<\aleph_1)$ such that $\val(x_i)>\val(x_j)$ for all $i < j$.
        Set $\bF_i'\coloneqq  \bE_0 \langle r + r_j x_j: j < i\rangle$. We claim that $\bF\coloneqq \bF'_{\aleph_1}$ is not res-constructible. Suppose toward contradiction that it is: then there is a sequence $(\bF_i)_{i < \aleph_1}$, such that
        \begin{enumerate}
            \item $\bE_0= \bF_0\prec \bF_i\prec \bF_j \prec \bF$ for all $0 \le i<j<\aleph_1$;
            \item $\dim(\bF_{i+1}/\bF_{i})=1= \dim \big( \res(\bF_{i+1})/\res(\bF_i)\big)$ and $\bF_{i} = \bigcup_{j<i}\bF_{j+1}$, for all $i<\aleph_1$.
        \end{enumerate}
        Consider the two functions $f, g: \aleph_1 \to \aleph_1$ defined respectively as
        \[f(\alpha)\coloneqq  \min \{\beta: \bF_\alpha \subseteq \bF'_{\beta}\} \quad \text{and} \quad g(\alpha)\coloneqq \min \{\beta: \bF'_\alpha \subseteq \bF_{\beta}\}.\]
        Notice that these are both increasing and continuous.

        It follows that there is some $0<\delta<\aleph_1$ which is a limit ordinal and a fixed point both of $f$ and $g$. For such $\delta$ we have $\bF_{\delta}=\bF'_\delta$. Since by construction $\bF$ is res-constructible over $\bF_\delta$, by Lemma~\ref{lem:res_implies_no_weak_elements} all elements of $\bF$ should be residually generated over $\bF_\delta=\bF'_\delta$.
        
        Notice that $r+x_\delta r_\delta \in r+x_\beta r_\beta + x_\beta \cO_*$, thus since $r+x_\delta r_\delta$ cannot be weakly immediate over $\bF_\delta$ (by Fact \ref{fact:cuts_tricho}), it follows that there is $a \in \bF_{\delta}$ such that $r+x_\delta r_\delta - a \in \bigcap_{\beta<\delta} x_\beta \co_*$.
        Now there is a finite $F \subseteq \delta$, such that $a \in \bE_0 \langle r+ r_\beta x_\beta: \beta\in F\rangle$. But now for all $\beta \in F$, $\res(r_\beta)=\res((r+r_\beta x_\beta-a)/x_\beta) \in \res (\bE_0 \langle r+r_\beta x_\beta : \beta \in F\rangle)$ and on the other hand $\res(r)=\res(a)=\res(r+r_\beta x_\beta) \in \res(\bE_0\langle r+ r_\beta x_\beta: \beta\in F\rangle)$ as well, thus since $\{\res(r)\} \cup \{\res(r_\beta): \beta \in F\}$ are independent over $\res(\bE_0)$, it would follow that $\res(\bE_0 \langle r+ r_\beta x_\beta: \beta \in F\rangle)$ has dimension $|F|+1$ over $\res(\bE_0)$ contradicting Fact~\ref{fact:res_dim_inequality}.
    \end{proof}
\end{theorem}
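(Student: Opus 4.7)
The plan is to construct $\bE_1$ as an intermediate field built from a $\cO_*$-res-construction of $\bE_*$ over $\bE$ by entangling $\aleph_1$-many residual generators against a strictly monotone $\aleph_1$-well-ordered valuation sequence; any hypothetical res-construction of $\bE_1$ over $\bE$ will then produce, at an appropriate limit step, an element that cannot be residually generated, via a fixed-point/club argument combined with a residue-field dimension count.

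\textbf{Construction.} Using hypothesis (2), I take a $\cO_*$-res-construction of $\bE_*$ over $\bE$ of cardinality $\ge\aleph_1$; I distinguish an element $r$, an $\aleph_1$-subsequence $(r_j)_{j<\aleph_1}$, and a residual remainder $(s_i)_{i<\kappa}$, so that $\res(r),(\res(r_j))_{j<\aleph_1},(\res(s_i))_{i<\kappa}$ forms a $\dcl_T$-basis of $\res(\bE_*)/\res(\bE)$. Set $\bE_0:=\bE\langle s_i:i<\kappa\rangle$. By hypothesis (1) and a suitable choice of partition, I arrange that $\bE_0$ contains a sequence $(x_j)_{j<\aleph_1}$ whose $\val_{\cO_*}$-values are strictly monotone along $\aleph_1$. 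Define $\bF'_i:=\bE_0\langle r+r_j x_j:j<i\rangle$ for $i\le\aleph_1$, and set $\bE_1:=\bF'_{\aleph_1}$. A dimension count (adjoining any single $r_j$ to $\bE_1$ lets us recover $r=(r+r_j x_j)-r_j x_j$, and then every $r_k$, so $\dim_{\dcl}(\bE_*/\bE_1)=1$) shows $\bE\prec\bE_1\prec\bE_*$ strictly; meanwhile the residue extraction $\big((r+r_a x_a)-(r+r_b x_b)\big)/(x_a-x_b)\in\cO_*$ has residue $\res(r_{a})$ (the smaller-indexed partner, under increasing $\val(x_\cdot)$), so every $\res(r_j)$ lies in $\res(\bE_1)$ via a larger-indexed partner, securing $\res(\bE_1)=\res(\bE_*)$.

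\textbf{Fixed-point.} Suppose for contradiction that $\bE_1$ is $\cO_*$-res-constructible over $\bE$. Since $\dim_{\dcl}(\bE_1/\bE_0)=\aleph_1$, I arrange the hypothetical res-construction as a continuous increasing chain $(\bF_\alpha)_{\alpha\le\aleph_1}$ with $\bF_0:=\bE_0$, each $\dim(\bF_{\alpha+1}/\bF_\alpha)=1$ residually generated, and $\bF_{\aleph_1}=\bE_1$. The comparison maps $f,g:\aleph_1\to\aleph_1$ defined by $f(\alpha):=\min\{\beta:\bF_\alpha\subseteq\bF'_\beta\}$ and $g(\alpha):=\min\{\beta:\bF'_\alpha\subseteq\bF_\beta\}$ are both increasing and continuous, so the set $\{\delta<\aleph_1:f(\delta)=g(\delta)=\delta\}$ is a club in $\aleph_1$ and thus contains a limit ordinal $\delta$; at such $\delta$ one has $\bF_\delta=\bF'_\delta$.

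\textbf{Contradiction.} By Lemma~\ref{lem:res_implies_no_weak_elements} applied to the res-constructible tail $\bE_1\supseteq\bF_\delta$, $r+r_\delta x_\delta\in\bE_1\setminus\bF_\delta$ is $\cO_*$-residually generated over $\bF_\delta$; hence by Fact~\ref{fact:cuts_tricho} it is not weakly $\cO_*$-immediate, so $\val(r+r_\delta x_\delta-\bF_\delta)$ attains a maximum at some $a\in\bF_\delta$. Combining this with the approximations $r+r_\beta x_\beta\in\bF_\delta$ for $\beta<\delta$ and the strict monotonicity and jump-at-$\delta$ property of $(\val(x_\beta))$ (arranged in the setup), the maximum exceeds every $\val(x_\beta)$ for $\beta<\delta$, so $r+r_\delta x_\delta-a\in\bigcap_{\beta<\delta}x_\beta\co_*$. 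Writing $a\in\bE_0\langle r+r_\beta x_\beta:\beta\in F\rangle$ for some finite $F\subseteq\delta$: for each $\beta\in F$, $(r+r_\beta x_\beta-a)/x_\beta\in\cO_*$ has residue $\res(r_\beta)$, while $\res(a)=\res(r)$; so the residue field of this $|F|$-dimensional extension of $\bE_0$ carries $|F|+1$ $\dcl_T$-independent new residues, contradicting Fact~\ref{fact:res_dim_inequality}. The hardest step is this uniform cut analysis: showing that the non-weakly-immediate maximum dominates every $\val(x_\beta)$, $\beta<\delta$, uniformly, which relies on the fixed-point property $\bF_\delta=\bF'_\delta$, the well-ordered structure of $(x_\beta)$ from hypothesis (1), and selecting $\delta$ so that $\sup_{\beta<\delta}\val(x_\beta)$ is strictly below $\val(x_\delta)$.
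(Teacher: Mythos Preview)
Your proof is correct and follows essentially the same route as the paper's: the same construction $\bE_1=\bE_0\langle r+r_jx_j:j<\aleph_1\rangle$, the same club/fixed-point argument to locate a limit $\delta$ with $\bF_\delta=\bF'_\delta$, and the same residue-dimension contradiction against Fact~\ref{fact:res_dim_inequality}. You additionally spell out why $\res(\bE_1)=\res(\bE_*)$ and why $\bE_1\subsetneq\bE_*$, which the paper leaves to the reader.

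One minor confusion worth flagging: the condition you say you arrange, ``$\sup_{\beta<\delta}\val(x_\beta)$ strictly below $\val(x_\delta)$'', is neither needed nor in general achievable (e.g.\ if the well-ordered subset of the value group has order type exactly $\aleph_1$ with no jumps at limits). The inequality $\val(r+r_\delta x_\delta-a)>\val(x_\beta)$ for every $\beta<\delta$ already follows from the maximum being at least $\val(x_{\beta+1})>\val(x_\beta)$, using only that $\delta$ is a limit ordinal. Drop the jump requirement and your argument goes through verbatim; this is exactly how the paper handles it.
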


\begin{proof}[Proof of Theorem~\ref{introthm:Main}]
    The implication $(1) \Rightarrow (2)$ is Theorem~\ref{thm:mainthm}, $(2) \Rightarrow (3)$ is trivial, and finally $(3) \Rightarrow (1)$ is Theorem~\ref{thm:mainthm_negative}. 
\end{proof}

\section{Application to Tressl's pseudo-completions}\label{sec:Tressl_problem}

We recall the definition of pseudo-completion of \cite{tressl2006pseudo}, and show how our results apply to \cite[Problem~5.12]{tressl2006pseudo}.

\begin{definition}
    We call an elementary extension $\bK \preceq \bK'\models T$, \emph{dense} if $\bK$ is dense in $\bK'$, we call it \emph{cofinal} if $\bK$ is cofinal in $\bK'$.
    Thus a \emph{Cauchy-completion} of some $\bK \models T$, can be defined as an elementary extension $\bK_* \succeq \bK$ which is maximal among the dense elementary extensions $\bK'\succeq \bK$.
\end{definition}

\begin{remark}\label{rmk:Cauchy-completion_terminology}
    In \cite[Sec.~3]{tressl2006pseudo}, what we call Cauchy-completion is called just \emph{completion}. 
\end{remark}

\begin{lemma}\label{lem:dense_in_cofinal}
    Suppose that $\bK \preceq \bK'$ is a dense extension and suppose that $\bK \preceq \bE$ is a cofinal extension such that for all $x \in \bK'$, $\tp(x/\bK)$ is realized in $\bE$. Then there is a unique elementary embedding over $\bK$ of $\bK'$ in $\bE$.
    \begin{proof}
        For the uniqueness, it suffices to show that for all $x \in \bK'\setminus \bK$, $\tp(x/\bK)$ has a unique realization in $\bE$. If it was not the case and $x_0, x_1 \in \bE$, were distinct realizations of $\tp(x/\bK)$ then since $\bK$ is dense in both $\bK\langle x_0\rangle$ and $\bK\langle x_1\rangle$ it would follow that $1/|x_0-x_1|>\bK$ contradicting the cofinality of $\bK$ in $\bE$.
        To prove existence, let $(b_i: i< \alpha)$ be a $\dcl_T$-basis of $\bK'$ over $\bK$. To build an embedding of $\bK$ in to $\bE$ it suffices to show that for all $\beta<\alpha$, if $(c_i: i< \beta)$ is a tuple in $\bE$ with $(b_i:i<\beta)\equiv_\bK (c_i: i< \beta)$, then we can find $c_\beta$ in $\bE$ such that $(b_i:i\le\beta)\equiv_\bK (c_i: i\le \beta)$. This easily follows from the fact that $\tp(b_\beta/\bK)\vdash \tp(b_\beta/\bK \langle b_i: i< \beta\rangle)$ because $\bK'$ is dense in $\bK$.
    \end{proof}
\end{lemma}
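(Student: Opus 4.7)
My plan is to handle uniqueness and existence separately, both leveraging that in an o-minimal theory types over a model are determined by cuts.

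For uniqueness, I would argue by contradiction. Suppose two elementary embeddings $\varphi_0, \varphi_1 \colon \bK' \to \bE$ over $\bK$ disagree on some $x \in \bK'$. Then $x_0 := \varphi_0(x)$ and $x_1 := \varphi_1(x)$ are distinct realisations in $\bE$ of $\tp(x/\bK)$, so they induce the same $\bK$-cut; assuming $x_0 < x_1$, no $c \in \bK$ can lie in $[x_0, x_1]$. Each $\varphi_i$ restricts to an isomorphism $\bK\langle x \rangle \to \bK\langle x_i\rangle$ over $\bK$, so $\bK$ is dense in $\bK\langle x_0\rangle$. Hence $\bK$ contains elements arbitrarily close above $x_0$; but any such $c$ must in fact exceed $x_1$, so $c - x_0 > x_1 - x_0 > 0$. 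This forces $x_1 - x_0$ to be smaller than every positive element of $\bK$, whence $1/(x_1 - x_0) \in \bE$ exceeds all of $\bK$, contradicting cofinality.

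For existence, I would build the embedding by transfinite recursion along a $\dcl_T$-basis $(b_i : i < \alpha)$ of $\bK'$ over $\bK$. At stage $\beta$ I would maintain $(c_i : i < \beta)$ in $\bE$ with $(c_i : i < \beta) \equiv_\bK (b_i : i < \beta)$, which (since $T$ has definable Skolem functions) gives a $\bK$-isomorphism $\sigma_\beta \colon \bK\langle b_i : i < \beta\rangle \to \bK\langle c_i : i < \beta\rangle$ with $b_i \mapsto c_i$. I would then pick $c_\beta \in \bE$ to realise the image under $\sigma_\beta$ of $\tp(b_\beta / \bK\langle b_i : i < \beta\rangle)$. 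Limit stages cause no issue by finite character of types. The hypothesis only guarantees realisations of types over $\bK$, so everything reduces to a ``cut-transfer'' lemma.

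The main obstacle is thus establishing this lemma: whenever $\bK \preceq \bM \preceq \bK'$ and $b \in \bK' \setminus \bM$, one has $\tp(b/\bK) \vdash \tp(b/\bM)$. By o-minimality it suffices to show that the $\bM$-cut of $b$ is determined by its $\bK$-cut. Given $c \in \bM$ with $c < b$, density of $\bK$ in $\bK'$ (which passes to density in $\bM\langle b\rangle$, as this is an elementary extension of $\bM$) furnishes $d \in \bK$ with $c < d < b$; this $d$ lies in the lower $\bK$-cut of $b$ and satisfies $d > c$, which recovers $c < b$ from the $\bK$-cut of $b$ alone. The case $c > b$ is symmetric. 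Granted this, any realisation in $\bE$ of $\tp(b_\beta/\bK)$ transports via $\sigma_\beta$ to a realisation of $\tp(b_\beta/\bK\langle b_i : i<\beta\rangle)$, so the recursion goes through and produces the required embedding $\bK' \to \bE$.
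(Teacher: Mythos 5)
Your proposal is correct and follows the paper's proof in both its structure and its key ideas: uniqueness via the observation that two distinct realisations of the same $\bK$-type would force $1/|x_1-x_0|$ to exceed $\bK$, contradicting cofinality; existence via transfinite recursion along a $\dcl_T$-basis, with the step carried by $\tp(b_\beta/\bK) \vdash \tp(b_\beta/\bK\langle b_i : i<\beta\rangle)$. Your ``cut-transfer lemma'' is exactly the fact the paper invokes tersely at the end, and your density argument supplies the details the paper leaves implicit.
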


\begin{corollary}\label{cor:Cauchy-completion_unique}
    The following hold:
    \begin{enumerate}
        \item if $\bE$ is a cofinal extension of $\bK$, then there is a unique maximal dense extension of $\bK$ in $\bE$;
        \item Cauchy-completions exist and are unique up to a unique isomorphism: if $\bK \preceq \bK'$ and $\bK \preceq \bK''$ are Cauchy-completions of $\bK$, then there is a unique isomorphism $\bK'\simeq \bK''$ over $\bK$.
    \end{enumerate}
\end{corollary}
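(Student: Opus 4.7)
The plan is to reduce everything to Lemma~\ref{lem:dense_in_cofinal} together with a \emph{join density} property: if $\bK \preceq \bK_1, \bK_2 \preceq \bU$ are elementary extensions with $\bK$ dense in each of $\bK_1, \bK_2$, then $\bK$ is dense in $\bK_1 \langle \bK_2 \rangle := \dcl_T(\bK_1 \cup \bK_2)$. I would establish this by reducing inductively to the case $\bK_2 = \bK\langle x \rangle$ for a single $x$: any element of $\bK_1 \langle x\rangle$ is $h(x)$ for some $\bK_1$-definable unary function $h$, and by o-minimality the discontinuities of $h$ form a finite $\bK_1$-definable set lying inside $\bK_1$, so $h$ is continuous at $x\notin\bK_1$. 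Since density forces $\bK_1, \bK\langle x\rangle \subseteq \CH_\bU(\bK)$, the parameters of $h$ and $x$ itself are $\bK$-bounded, supplying the $\epsilon$-$\delta$ control needed to transport density of $\bK$ in $\bK\langle x\rangle$ through $h$ to $\bK$-approximations of $h(x)$.

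For part (1), existence follows from Zorn's lemma applied to the poset of dense elementary extensions of $\bK$ in $\bE$, whose chain-unions are elementary by Tarski-Vaught and evidently dense. For uniqueness, if $\bK_1, \bK_2 \preceq \bE$ are both maximal dense extensions, join density makes $\bK_1 \langle \bK_2\rangle \preceq \bE$ another dense extension containing each, so maximality forces $\bK_1 = \bK_1 \langle \bK_2\rangle = \bK_2$.

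For part (2) existence, I would take a sufficiently saturated elementary extension $\bU \succ \bK$ and produce by Zorn a maximal dense elementary extension $\bK_* \preceq \bU$ of $\bK$. If $\bK_* \preceq \bK_*'$ were any proper dense extension of $\bK$, then picking $x \in \bK_*' \setminus \bK_*$, saturation of $\bU$ would realize $\tp(x/\bK_*)$ by some $x' \in \bU$; by join density $\bK_*\langle x'\rangle \preceq \bU$ would then be a dense extension of $\bK$ properly containing $\bK_*$, contradicting maximality. Hence $\bK_*$ is a Cauchy-completion.

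For uniqueness in (2), given Cauchy-completions $\bK', \bK''$ of $\bK$, I would take a sufficiently saturated $\bU \succ \bK'$ and use universality of saturated models to embed $\bK''$ as $\iota: \bK'' \hookrightarrow \bU$ over $\bK$. Join density makes $\bK'\langle \iota(\bK'')\rangle \preceq \bU$ a dense extension of $\bK$, and by maximality of both $\bK'$ and $\iota(\bK'')$ among dense extensions of $\bK$, this join equals each, forcing $\bK' = \iota(\bK'')$ so that $\iota$ is an isomorphism. Uniqueness of this isomorphism is immediate from Lemma~\ref{lem:dense_in_cofinal}: $\bK''$ is dense over $\bK$, $\bK'$ is cofinal over $\bK$ (since density implies cofinality), and every type over $\bK$ realized in $\bK''$ is realized in $\bK'$ via any candidate isomorphism, so any two $\bK$-embeddings $\bK'' \hookrightarrow \bK'$ coincide. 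The main obstacle is proving the join density property, which rests on the o-minimal continuity argument sketched in the first paragraph.
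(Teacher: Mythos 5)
The proposal rests on a \emph{join density} lemma that is false as stated. If $\bK \prec \bK_1,\bK_2 \preceq \bU$ with $\bK$ dense in each, it does \emph{not} follow that $\bK$ is dense in $\dcl_T(\bK_1\cup\bK_2)$. Take $\bK$ the real algebraic numbers, $\bK_1 = \bK\langle y_1\rangle$ with $y_1$ realizing the cut of $e$, and $\bK_2 = \bK\langle y_2\rangle$ with $y_2$ a \emph{different} realization of that same cut over $\bK$ inside a non-archimedean $\bU$ (plenty exist once $\bU$ is saturated). Both extensions are dense over $\bK$, yet $\dcl_T(\bK_1\cup\bK_2)$ contains $1/(y_1-y_2)$, which exceeds $\bK$; so $\bK$ is not even cofinal in the join. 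The obstruction is precisely that two distinct elements may realize the same $\bK$-cut when the ambient model is not cofinal over $\bK$; the hypothesis you would actually need is that $\bU$ is cofinal over $\bK$, and this is exactly the content of the uniqueness clause in Lemma~\ref{lem:dense_in_cofinal}.

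This does not affect your part~(1), since there $\bE$ is cofinal over $\bK$ by hypothesis, so distinct elements of $\bE$ have distinct $\bK$-cuts and the continuity argument can be carried through. But it breaks part~(2) uniqueness as written: you pass to a saturated $\bU\succ\bK'$ and embed $\bK''$ via an \emph{arbitrary} elementary embedding $\iota$ from universality, then invoke join density for $\bK'$ and $\iota(\bK'')$ inside $\bU$. A saturated $\bU$ is certainly not cofinal over $\bK$, and nothing forces $\iota$ to send $y''\in\bK''$ to the unique element $y'\in\bK'$ with the same $\bK$-cut; with an unlucky $\iota$ one reproduces the counterexample above and $\bK'\langle\iota(\bK'')\rangle$ is not dense over $\bK$, so maximality gives nothing. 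Your part~(2) existence has a milder version of the same issue: you need to argue (rather than assume) that the realization $x'$ of $\tp(x/\bK_*)$ in $\bU$ is not $\bK$-infinitesimally close to some element of $\bK_*$ distinct from it --- this does follow, because $x\in\bK_*'\setminus\bK_*$ and $\bK_*'$ is cofinal over $\bK$, but the step is not free.

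The route the paper intends avoids this entirely: one builds the isomorphism $\bK''\to\bK'$ \emph{directly} from Lemma~\ref{lem:dense_in_cofinal}, taking $\bK'$ itself (cofinal over $\bK$) as the target model, once one has verified that every type over $\bK$ from $\bK''$ is realized in $\bK'$. That verification is where maximality enters, and is where the ``Cauchy cut generates a dense $1$-step extension'' fact --- the true technical core of your first paragraph --- is used, but one never leaves the cofinal setting, so the join-density pathology cannot arise. Surjectivity of the resulting embedding then follows again from the uniqueness clause of Lemma~\ref{lem:dense_in_cofinal} applied symmetrically. I would restate your join density lemma with the cofinality hypothesis built in, and replace the universality step in part~(2) uniqueness by a direct appeal to Lemma~\ref{lem:dense_in_cofinal}.
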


\begin{definition}\label{def:res-c-dc}
    Let $\bE \preceq \bU \models T$ and $\cO$ a $T$-convex subring of $\bU$. We will call an extension $\bE_*\preceq \bU$ of $\bE$ \emph {$\cO$-residually dense} (resp. \emph{$\cO$-residually cofinal}) if it is $\cO$-res-constructible and moreover the extension $\res_{\cO}(\bE) \preceq \res_{\cO}(\bE_*)$ is dense (resp.\ cofinal).
\end{definition}

\begin{lemma}\label{lem:res_cof}
    Suppose that $\cO\subseteq \cO'$ are $T$-convex valuation subrings of $\bE_1$ and let $\bE_1$ be $\cO$-res-constructible over $\bE$. If $\bE_1$ is $\cO$-residually cofinal over $\bE$, or if $\cO\subsetneq \cO'$, then $\bE_1$ is $\cO'$-residually cofinal over $\bE$.
    \begin{proof}
        We first observe that if $\bK\prec \bK_1\models T$, $V$ is a non-trivial $T$-convex valuation ring of $\bK_1$, and $\bK_1$ is $V$-res-constructible over $\bK$, then $\bK$ is cofinal in $\bK_1$. In fact if there was $t\in \bK_1$ such that $t>\bK$, then $t$ is $V$-purely valuational over $\bK$ so by Fact~\ref{fact:cuts_tricho} $t$ is not $V$-residually generated, contradicting Lemma~\ref{lem:res_implies_no_weak_elements}.
        Now let $\overline{r}$ be an $\cO$-res-construction of $\bE_1$ over $\bE$, then by Lemma~\ref{lem:res-constr_for_different_vals}, it is also a $\cO'$-res construction and $\res_{\cO'}(\overline{r})$ is a $\res_{\cO'}(\cO)$-res-construction for $\res_{\cO'}(\bE_1)$ over $\res_{\cO'}(\bE)$. In particular $\bE_1$ is $\cO'$-res-constructible over $\bE$ and $\bK_1\coloneqq \res_{\cO'}(\bE_1)$ is $V\coloneqq \res_{\cO'}(\cO)$-res-constructible over $\bK\coloneqq \res_{\cO'}(\bE)$. Thus to conclude we only need to show that $\bK$ is cofinal in $\bK_1$.

        If $\cO' \neq \cO$, then $V \neq \res_{\cO'}(\bE_1)$ and we are done by our first observation. 

        On the other hand if $\cO'=\cO$ and $\res_{\cO}(\bE)$ is cofinal in $\res_{\cO}(\bE_1)$, then the thesis is trivial.
    \end{proof}
\end{lemma}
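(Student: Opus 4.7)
My plan is to split the conclusion into its two parts: $\cO'$-res-constructibility of $\bE_1$ over $\bE$, and cofinality of $\res_{\cO'}(\bE)$ in $\res_{\cO'}(\bE_1)$. The first part is essentially free, since Lemma~\ref{lem:res-constr_for_different_vals} tells me that if $\overline{r}$ is an $\cO$-res-construction of $\bE_1$ over $\bE$, then it is already an $\cO'$-res-construction, and moreover that $\res_{\cO'}(\bE_1)$ is $V$-res-constructible over $\res_{\cO'}(\bE)$, where $V:=\res_{\cO'}(\cO)$ is a $T$-convex subring of $\res_{\cO'}(\bE_1)$. The whole problem thus reduces to a cofinality statement one level down in the residue field.

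The crux I would prove first is the following auxiliary fact: if $\bK \preceq \bK_1\models T$ is a $V_0$-res-constructible extension with $V_0$ a \emph{non-trivial} $T$-convex subring of $\bK_1$, then $\bK$ is cofinal in $\bK_1$. I would argue by contradiction: an element $t\in \bK_1$ with $t>\bK$ cannot be $V_0$-wim-generated nor $V_0$-residually generated over $\bK$ (being infinite, its valuation sits strictly below $\val_{V_0}(\bK)$), so by Fact~\ref{fact:cuts_tricho} it must be $V_0$-purely valuational over $\bK$; but Lemma~\ref{lem:res_implies_no_weak_elements} insists that every element of $\bK_1\setminus \bK$ is $V_0$-residually generated, which is the contradiction.

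With this auxiliary fact in hand, the two declared cases dispatch quickly. If $\cO\subsetneq\cO'$, pick $y\in \cO'\setminus \cO$; since $\co'\subseteq \cO$, any $z\in \cO$ with $\res_{\cO'}(y)=\res_{\cO'}(z)$ would yield $y=z+(y-z)\in \cO$, a contradiction, so $\res_{\cO'}(y)\notin V$ and $V$ is proper, hence non-trivial, in $\res_{\cO'}(\bE_1)$; the auxiliary fact then applies to $\bK:=\res_{\cO'}(\bE)\preceq \bK_1:=\res_{\cO'}(\bE_1)$ with $V_0:=V$. If instead $\cO=\cO'$, then $\cO'$-residual cofinality of $\bE_1$ over $\bE$ is literally the other hypothesis.

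The main obstacle I foresee is justifying cleanly that an infinite $t$ over $\bK$ cannot escape into a $V_0$-wim or $V_0$-residual cut; once the auxiliary fact is in place, the rest is just tracking valuations through Lemma~\ref{lem:res-constr_for_different_vals} and the short computation with $\co'\subseteq \cO$.
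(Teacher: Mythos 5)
Your proof takes essentially the same route as the paper's: you reduce, via Lemma~\ref{lem:res-constr_for_different_vals}, to a cofinality statement for the residue field extension $\res_{\cO'}(\bE) \preceq \res_{\cO'}(\bE_1)$, and you dispatch that by the same auxiliary fact (a res-constructible extension with non-trivial valuation is cofinal), invoked in the two cases $\cO\subsetneq\cO'$ and $\cO=\cO'$ exactly as the paper does. The only point of divergence is cosmetic: to justify the auxiliary fact, the paper asserts outright that an element $t>\bK$ is $V_0$-purely valuational and then reads off ``not residually generated'' from Fact~\ref{fact:cuts_tricho}, while you first argue (via the parenthetical about $\val_{V_0}(t)$ sitting strictly below $\val_{V_0}(\bK)$) that $t$ is neither wim-generated nor residually generated, infer ``purely valuational'' from the trichotomy, and then contradict Lemma~\ref{lem:res_implies_no_weak_elements}. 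Be aware that the parenthetical as stated needs the non-triviality of $V_0\cap\bK$ (not just of $V_0$ on $\bK_1$) to give $\val_{V_0}(t)<\val_{V_0}(\bK)$, and that ruling out ``$t$ residual'' is weaker than ruling out ``$t$ residually generated'' --- the clean way to close this, as the paper does, is to establish ``purely valuational'' directly and let the trichotomy do the exclusion. None of this changes the logical structure; it is the same argument.
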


\begin{remark}
    In the situation of the Definition~\ref{def:res-c-dc} above, if $\cO$ and $\cO'$ are $T$-convex subrings of $\bU$ such that $\cO \cap \bE = \cO' \cap \bE$, then $\bE_*\preceq \bU$ is $\cO$-residually dense (resp.\ $\cO$-residually cofinal) over $\bE$ if and only if it is $\cO'$-residually dense (resp.\ $\cO'$-residually cofinal). This is because if $\bE_*$ is $\cO$-residually cofinal over $\bE$, then $\cO \cap \bE_*$ is the only $T$-convex valuation ring of $\bE_*$ whose restriction to $\bE$ is $\bE\cap \cO$.
    
    Thus if $\cO''$ is a $T$-convex valuation subring of $\bE$, we will call an extension $\bE_*$ of $\bE$, $\cO''$-residually dense (resp.\ $\cO''$-residually cofinal) if it is $\cO$-residually dense (resp.\ $\cO$-residually cofinal) for some $T$-convex $\cO$ in $\bU$ with $\cO \cap \bE=\cO''$.
\end{remark}

\begin{lemma}\label{lem:res-dense_ortho_one}
    If $\bE \prec \bE_1$ is a dense extension and $\cO$ is a non-trivial $T$-convex valuation ring of $\bE_1$, then $\bE \not \subseteq \cO$, $\res_\cO(\bE)=\res_\cO(\bE_1)$, and $\val_\cO(\bE)=\val_\cO(\bE_1)$.
    \begin{proof}
        The claim that $\bE \not \subseteq \cO$ is trivial, because if $\bE \subseteq \cO$, then by density of $\bE$ in $\bE_1$ it would follow that $\cO=\bE_1$. 
        
        To prove $\res_\cO(\bE)=\res_\cO(\bE_1)$, suppose toward contradiction that there was an element $x$ of $\bE_1 \setminus \bE$ such that $\res_{\cO}(x)\notin \res_{\cO}(\bE)$. Then there would be no elements of $\bE$ in $x + \co$ whereas $(x+\co)\cap \bE_1$ would be infinite, thus contradicting the density of $\bE$ in $\bE_1$.

        Finally to see that $\val_{\cO}(\bE)=\val_{\cO}(\bE_1)$ notice that since $\bE$ is dense in $\bE_1$, then in particular for all $z \in \bE_1^{>0}$, there is $c \in \bE$ such that $c\geq z$ and $|c-z|/|z|<1$.
    \end{proof}  
\end{lemma}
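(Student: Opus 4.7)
The plan is to handle the three claims in order, each via a short argument exploiting the interaction between convexity of $\cO$ and the density of $\bE$ in $\bE_1$. The statement is essentially routine once one observes that density of $\bE$ in $\bE_1$ means every nontrivial open interval of $\bE_1$ meets $\bE$.

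First, for $\bE\not\subseteq \cO$: I would argue by contraposition. Assuming $\bE\subseteq \cO$, the $T$-convexity of $\cO$ means $\cO$ contains the convex hull of $\bE$ in $\bE_1$. By density, every $z\in \bE_1$ lies between two elements of $\bE$ (apply density to the interval $(z-1,z)$ and $(z,z+1)$), so $\bE_1=\CH_{\bE_1}(\bE)\subseteq\cO$, contradicting non-triviality of $\cO$.

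Second, for $\res_\cO(\bE)=\res_\cO(\bE_1)$: let $x\in \bE_1$. If $x\notin \cO$ there is nothing to prove since $\res_\cO(x)=0=\res_\cO(0)$. If $x\in \cO$, then $x+\co$ is a convex subset of $\bE_1$ containing a nontrivial open interval around $x$ (because $\cO$ being non-trivial forces $\co\neq 0$). By density of $\bE$ in $\bE_1$, there exists $c\in \bE\cap(x+\co)$; such a $c$ lies in $\cO$ (since $\cO=x+\cO\supseteq x+\co$) and satisfies $\res_\cO(c)=\res_\cO(x)$, whence $\res_\cO(x)\in \res_\cO(\bE)$.

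Third, for $\val_\cO(\bE)=\val_\cO(\bE_1)$: given $z\in \bE_1^{>0}$, use density to pick $c\in \bE$ with $z<c<2z$. Then $c/z$ lies in the interval $(1,2)$, which is contained in $\cO\setminus\co$ (since $1,2\in \cO\setminus \co$ and $\cO\setminus\co$ is upward-closed in $\bE_1^{>0}$ below any element of $\bE_1^{>0}\cap(\cO\setminus\co)$ by convexity of both $\cO$ and $\co$). Hence $\val_\cO(c)=\val_\cO(z)\in \val_\cO(\bE)$.

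There is no genuine obstacle; the mild point to verify carefully is that $(1,2)\subseteq\cO\setminus\co$, which I would check directly from $1\in \cO\setminus\co$ together with the convexity of $\cO$ and $\co$ (the only subtlety being that $\co$ being a convex ideal and $2\in\cO\setminus\co$ together force the interval $(1,2)$ to consist of units).
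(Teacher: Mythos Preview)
Your proposal is correct and follows essentially the same approach as the paper: both argue each of the three claims by exploiting density together with convexity of $\cO$ and $\co$, and your choice of $c\in(z,2z)$ is exactly the paper's choice of $c\ge z$ with $|c-z|/|z|<1$. The only cosmetic difference is that the paper phrases the residue-field step as a contradiction (no $c\in\bE$ lies in $x+\co$) whereas you give the equivalent direct argument (some $c\in\bE$ does lie in $x+\co$).
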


\begin{remark}\label{rmk:res_constr_lift}
    If $\overline{r}$ is a $\cO$-res-construction over $\bE$ and $\res_\cO(\bE_1)=\res_\cO(\bE)$, then $\overline{r}$ is a $\cO$-res-construction over $\bE_1$.
\end{remark}

\begin{lemma}\label{lem:res-dense_ortho}
    Let $\bE \prec \bU \models T$ and $\cO \subseteq \cO'$ be $T$-convex valuation subrings of $\bU$ such that $\cO \cap \bE \neq \cO' \cap \bE$. Suppose that $\bE_1$ is a $\cO'$-residually dense extension of $\bE$ with $\cO'$-res-construction $\overline{b}$ and $\bE_2$ is a $\cO$-res-constructible extension of $\bE$ with $\cO$-res-construction $\overline{c}$. Then the following hold:
    \begin{enumerate}
        \item $\res_{\cO}(\bE)=\res_\cO(\bE_1)$ and $\val_{\cO}(\bE \cap (\cO'\setminus \co'))= \val_\cO (\bE_1 \cap (\cO'\setminus \co'))$, in particular $\cO \cap \bE_1$ is the unique $T$-convex extension of $\cO \cap \bE$; 
        \item $\overline{c}$ is a $\cO$-res-construction of $\dcl(\bE_2\cup \bE_1)$ over $\bE_1$ and $\dcl(\bE_2\cup \bE_1)$ is $\cO'$-residually cofinal over $\bE$ with $\cO'$-res-construction $(\overline{b}, \overline{c})$;
        \item $\tp_L(\bE_1/\bE) \vdash \tp_L(\bE_1/\bE_2)$.
    \end{enumerate}
    \begin{proof}
        Throughout the proof let $V\coloneqq \res_{\cO'}(\cO)$ so that $\res_\cO= \res_V \circ \res_{\cO'}$.
        
        (1) Since $\bE_1$ is $\cO'$-residually dense, $\res_{\cO'}(\bE_1)$ is a dense extension of $\res_{\cO'}(\bE)$. Thus by Lemma~\ref{lem:res-dense_ortho_one} $\res_V \res_{\cO'}(\bE_1) = \res_V \res_{\cO'}(\bE)$ and
        \[\res_{\cO}(\bE_1)=\res_V \res_{\cO'}(\bE_1) = \res_V \res_{\cO'}(\bE) = \res_{\cO}(\bE).\]
        The statement about the value group is proved in the same way using that $\val_\cO(\bE_1 \cap (\cO' \setminus \co'))=\val_{V}(\res_{\cO'}(\bE_1))$ and Lemma~\ref{lem:res-dense_ortho_one}. The last clause is because in particular $\val_\cO(\bE_1 \cap (\cO' \setminus \co'))=\val_{V}(\res_{\cO'}(\bE_1))$ and $\res_{\cO}(\bE)=\res_\cO(\bE_1)$ imply that there is no $x \in \bE_1$ such that $\cO \cap \bE < x <\bE^{>\cO}$.
        
        (2) The first assertion is by point (1) and Remark~\ref{rmk:res_constr_lift}. As for the second one, notice that by Lemma~\ref{lem:res-constr_for_different_vals} it follows from the first assertion that $(\overline{b}, \overline{c})$ is a $\cO'$-res-construction over $\bE$, because $\overline{b}$ was a $\cO'$-res-construction over $\bE$. Finally, to see that $\res_{\cO'}(\bE)$ is cofinal in $\res_{\cO'}(\dcl(\bE_2 \cup \bE_1))$, recall that $\res_{\cO'}(\bE)$ is dense in $\res_{\cO'}(\bE_1)$ by hypothesis and that $\res_{\cO'}(\bE_1) \prec \res_{\cO'}(\dcl(\bE_2 \cup \bE_1))$ is a $V$-res-constructible extension with $V \not \supseteq \res_{\cO'}(\bE_1)$ and thus in particular it is cofinal by Lemma~\ref{lem:res_cof}.

        (3) Let $\ob$ be a $\cO'$-res-construction of $\bE_1$ over $\bE$.
        Then it suffices to show that for any finite tuple $b'_0, \ldots b'_n \subset \ob$, $\tp(b'_n/\bE\langle b'_0, \ldots b'_{n-1}\rangle) \vdash \tp(b'_n/\bE_2\langle b'_0, \ldots b'_{n-1}\rangle)$.
        
        For this notice that by Remark~\ref{rmk:res_constr_lift} and point (1), we have that $\bE_2\langle b_0', \ldots, b_{n-1}'\rangle$ is $\cO$-res-constructible over $\bE\langle b_0', \ldots b_{n-1}'\rangle$. Thus by Lemma~\ref{lem:res_implies_no_weak_elements}, every element of $\bE_2\langle b_0', \ldots, b_{n-1}'\rangle$ is $\cO$-residually generated over $\bE \langle b_0', \ldots, b_{n-1}'\rangle$.

        Now by point (1), $\res_\cO(\bE \langle b_0', \ldots, b_n'\rangle)=\res_\cO(\bE \langle b_0', \ldots, b_{n-1}'\rangle)=\res_\cO(\bE)$, thus $b_n'$ is not $\cO$-residually generated over $\bE \langle b_0', \ldots, b_{n-1}'\rangle$.
        Moreover $\cO \cap \bE \langle b_0', \ldots, b_{n}'\rangle$ is the unique extension of $\cO \cap \bE \langle b_0', \ldots, b_{n-1}'\rangle$, so $\tp_L(b_n'/\bE \langle b_0', \ldots, b_{n-1}'\rangle)$ implies the $L_\convex$-type of $b_n'$ over $(\bE \langle b_0', \ldots, b_{n-1}'\rangle, \cO \cap \bE \langle b_0', \ldots, b_{n-1}'\rangle)$.
        It follows that $\tp_{L}(b_n'/\bE\langle b_0', \ldots b_{n-1}'\rangle)$ is not realized in $\bE_2 \langle b_0', \ldots, b_{n-1}'\rangle$ and we are done.
    \end{proof}
\end{lemma}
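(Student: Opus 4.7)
The plan is to prove the three parts sequentially, exploiting the factorization $\res_\cO = \res_V \circ \res_{\cO'}$ where $V := \res_{\cO'}(\cO)$ is a $T$-convex valuation subring of $\res_{\cO'}(\bU)$, non-trivial on $\res_{\cO'}(\bE)$ by the assumption $\cO \cap \bE \neq \cO' \cap \bE$. For part (1), since $\bE_1$ is $\cO'$-residually dense over $\bE$ the extension $\res_{\cO'}(\bE) \preceq \res_{\cO'}(\bE_1)$ is dense, so Lemma~\ref{lem:res-dense_ortho_one} applied with the non-trivial valuation $V$ on $\res_{\cO'}(\bE_1)$ yields equality of residues and value groups under $V$; composing with $\res_{\cO'}$ delivers the two stated equalities. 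The uniqueness of $\cO \cap \bE_1$ as a $T$-convex extension of $\cO \cap \bE$ then follows from Fact~\ref{fact:unary_type_extension_alternative}: a second extension would require an element of $\bE_1 \setminus \bE$ either producing a new $\cO$-residue or sitting strictly above $\cO \cap \bE$ but still dominated by $\bE$, both options being excluded by the equalities.

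For part (2), the equality $\res_\cO(\bE_1) = \res_\cO(\bE)$ from part (1) together with Remark~\ref{rmk:res_constr_lift} immediately gives that $\overline{c}$ is a $\cO$-res-construction of $\dcl(\bE_2\cup \bE_1)$ over $\bE_1$. Then Lemma~\ref{lem:res-constr_for_different_vals} turns the concatenation $(\overline{b},\overline{c})$ into a $\cO'$-res-construction over $\bE$. For the residual cofinality: density of $\res_{\cO'}(\bE)$ in $\res_{\cO'}(\bE_1)$ is in particular cofinality, and $\res_{\cO'}(\bE_1) \preceq \res_{\cO'}(\dcl(\bE_2 \cup \bE_1))$ is $V$-res-constructible with $V$ non-trivial on $\res_{\cO'}(\bE_1)$, so Lemma~\ref{lem:res_cof} gives cofinality of the latter step; transitivity then yields the required cofinality over $\bE$.

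For part (3), I would proceed by induction along the $\cO'$-res-construction $\overline{b}=(b_i')$ of $\bE_1$ over $\bE$, reducing to the one-step claim
\[\tp_L(b_n'/\bE\langle b_0',\ldots,b_{n-1}'\rangle) \vdash \tp_L(b_n'/\bE_2\langle b_0',\ldots,b_{n-1}'\rangle).\]
Part (1), applied with $\bE$ replaced by $\bE\langle b_0',\ldots,b_{n-1}'\rangle$ (whose $\cO$-residue field equals $\res_\cO(\bE)$, so $\overline{c}$ is still a $\cO$-res-construction there), makes $\bE_2\langle b_0',\ldots,b_{n-1}'\rangle$ $\cO$-res-constructible over $\bE\langle b_0',\ldots,b_{n-1}'\rangle$; Lemma~\ref{lem:res_implies_no_weak_elements} then forces every element of the former outside the latter to be $\cO$-residually generated. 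However $b_n'$ itself is \emph{not} $\cO$-residually generated over $\bE\langle b_0',\ldots,b_{n-1}'\rangle$, since $\res_\cO(\bE_1)=\res_\cO(\bE)$ is preserved. The uniqueness clause of part (1) tells us $\tp_L(b_n'/\bE\langle b_0',\ldots,b_{n-1}'\rangle)$ already pins down its $L_\convex$-type, and since ``being $\cO$-residually generated'' is an $L_\convex$-type invariant, no realization of this $L$-type can live inside $\bE_2\langle b_0',\ldots,b_{n-1}'\rangle$, closing the induction.

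The main obstacle I anticipate is Part (3): one must carefully justify that Part (1) relativizes to the intermediate bases $\bE\langle b_0',\ldots,b_{n-1}'\rangle$ so that the uniqueness of the $T$-convex extension lifts the $L$-type separation to the $L_\convex$-level, which is the bridge that converts the purely valuation-theoretic non-realization into an $L$-theoretic implication of types.
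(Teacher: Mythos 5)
Your proposal is correct and follows essentially the same route as the paper's proof: part (1) via factoring $\res_\cO = \res_V \circ \res_{\cO'}$ and applying Lemma~\ref{lem:res-dense_ortho_one}, part (2) via Remark~\ref{rmk:res_constr_lift}, Lemma~\ref{lem:res-constr_for_different_vals} and Lemma~\ref{lem:res_cof}, and part (3) via the same one-step reduction, using Lemma~\ref{lem:res_implies_no_weak_elements} to show $\bE_2\langle b'_0,\ldots,b'_{n-1}\rangle$ contains only $\cO$-residually generated elements while $b'_n$ is not one, with the uniqueness of the $T$-convex extension promoting the $L$-type to an $L_\convex$-type and the non-realization in $\bE_2\langle\ldots\rangle$ then yielding the type implication.
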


\begin{corollary}\label{cor:res-dense_ortho}
    Suppose that $\bE \preceq \bU \models T$ and let $\{\cO_i: i \in I\}$ be a family of $T$-convex subrings of $\bU$ such that for all $i \neq j$, $\cO_i \cap \bE \neq \cO_j \cap \bE$. Let also for each $i \in I$, $\bE_i\preceq \bU$ be a $\cO_i$-residually dense extension of $\bE$ with $\cO_i$-res-construction $\overline{x}^{(i)}$. Then, setting $\cO\coloneqq \bigcup_{i \in I} \cO_i$: 
    \begin{enumerate}
        \item $(x^{(i)}_n: i \in I, \, n < |\overline{x}^{(i)}|)$ is a $\cO$-res-construction over $\bE$ and $\dcl(\bigcup_{i \in I} \bE_i)$ is $\cO$-residually cofinal over $\bE$;
        \item $\bigcup_{i \in I} \tp_L(\bE_i/\bE) \vdash \tp_L(\bigcup_{i \in I}\bE_i/\bE)$;
    \end{enumerate}
    \begin{proof}

        We first prove the statement with the additional hypothesis that $I$ is finite.
        If $I$ is empty there is nothing to show, otherwise let $j\in I$ be such that $\cO_j\cap \bE \supsetneq \cO_{i}\cap \bE$ for all $i\in I\setminus \{j\}$.
        By induction we may assume that the statement holds for $I'= I \setminus \{j\}$. So $\bigcup_{i \in I\setminus \{j\}} \tp(\bE_i/\bE) \vdash \tp\big((\bigcup_{i \in I\setminus \{j\}} \bE_i)/ \bE\big)$ and, with $\cO'\coloneqq  \bigcup_{i \neq j} \cO_i$, $\dcl(\bigcup_{i \neq j} \bE_i)$ is $\cO'$-residually cofinal over $\bE$ with $\cO'$-res-construction $(x^{(i)}_n: i \in I\setminus \{j\}, \, n < |\overline{x}^{(i)}|)$.

        But now by Lemma~\ref{lem:res-dense_ortho}~(3) and (2) respectively, it follows that
        \[\tp(\bE_j/\bE) \cup \tp\big((\bigcup_{i \in I\setminus \{j\}} \bE_i)/ \bE\big)\vdash \tp(\bigcup_{i\in I} \bE_i/\bE)\]
        and that $\dcl(\bigcup_{i \in I} \bE_i)$ is $\cO_j$-residually cofinal over $\bE$ with $\cO_j$-res-construction $(x^{(i)}_n: i \in I, \, n < |\overline{x}^{(i)}|)$.

        Now suppose $I$ is infinite. Then (2) follows from a standard compactness argument. As for (1), observe that $(\res_{\cO}(x^{(i)}_n): i \in I, \, n < |\overline{x}^{(i)}|)$ is $\dcl_T$-independent by Lemma~\ref{lem:res-constr_for_different_vals} and by finite character of $\dcl_T$. Similarly $\res_\cO(\dcl(\bigcup_{i \in I}\bE_i))$ must be a cofinal extension of $\res_\cO(\bE)$.
    \end{proof}
\end{corollary}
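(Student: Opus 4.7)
The plan is to reduce to finite $I$ (by finite character of $\dcl_T$-independence for part~(1) and by compactness for part~(2)), and to handle finite $I$ by induction on $|I|$, using Lemma~\ref{lem:res-dense_ortho} as the key step.

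For the inductive step, I would first observe that $T$-convex subrings of $\bU$ form a chain and that if $\cO_i \cap \bE \subsetneq \cO_j \cap \bE$ then also $\cO_i \subsetneq \cO_j$ in $\bU$ (otherwise any $y \in (\cO_j \cap \bE) \setminus \cO_i$ would contradict $\cO_j \subseteq \cO_i$). Hence $\{\cO_i \cap \bE : i \in I\}$ is totally ordered, and I can pick $j \in I$ so that $\cO_j \cap \bE$ is maximal; this forces $\cO_j = \bigcup_{i \in I} \cO_i = \cO$. Setting $\cO' := \bigcup_{i \neq j} \cO_i$, the inductive hypothesis applied to $I \setminus \{j\}$ yields that $\bE_2 := \dcl(\bigcup_{i \neq j} \bE_i)$ is $\cO'$-residually cofinal over $\bE$ with the concatenation $(x^{(i)}_n : i \neq j,\, n < |\overline{x}^{(i)}|)$ as $\cO'$-res-construction, and that $\bigcup_{i \neq j} \tp_L(\bE_i/\bE) \vdash \tp_L(\bE_2/\bE)$.

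Now I would apply Lemma~\ref{lem:res-dense_ortho} with $\bE_1 := \bE_j$ (which is $\cO_j$-residually dense) and $\bE_2$ as above (which is $\cO'$-res-constructible). The hypotheses $\cO' \subseteq \cO_j$ and $\cO' \cap \bE \neq \cO_j \cap \bE$ both hold by the choice of $j$. Clause~(3) of that lemma gives $\tp_L(\bE_j/\bE) \vdash \tp_L(\bE_j/\bE_2)$, which combined with the inductive hypothesis yields (2) in the finite case. Clause~(2) of that lemma gives that $\dcl(\bE_j \cup \bE_2)$ is $\cO_j$-residually cofinal over $\bE$ with the combined res-construction $(\overline{x}^{(j)}, \overline{x}^{(i)} : i \neq j)$; since $\cO = \cO_j$, this is precisely (1).

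For general $I$: part (2) follows by a standard compactness argument, since any $L$-formula over $\bE$ in elements of $\bigcup_i \bE_i$ involves only finitely many generators and hence lives in $\dcl(\bigcup_{i \in I_0} \bE_i)$ for some finite $I_0 \subseteq I$. For part (1), Lemma~\ref{lem:res-constr_for_different_vals} lifts each $\overline{x}^{(i)}$ from a $\cO_i$-res-construction to a $\cO$-res-construction, and by finite character of $\dcl_T$-independence, $(x^{(i)}_n : i \in I,\, n < |\overline{x}^{(i)}|)$ is a $\cO$-res-construction iff every finite subfamily is, which reduces to the finite case. The cofinality of $\res_\cO(\bE)$ in $\res_\cO(\dcl(\bigcup_i \bE_i))$ reduces similarly to finite subfamilies. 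I expect the only subtle point to be the bookkeeping in the inductive step: pairing $\bE_j$ with the ``residually dense slot'' (associated to the larger valuation ring $\cO_j$) rather than with the ``residually cofinal slot'' in Lemma~\ref{lem:res-dense_ortho}; once $\cO_j \cap \bE$ is chosen maximal, the roles align correctly.
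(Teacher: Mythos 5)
Your proof is correct and takes essentially the same route as the paper: reduce to the finite case (compactness for (2), finite character for (1)), then induct on $|I|$ by peeling off the $j$ with $\cO_j\cap\bE$ maximal, apply the inductive hypothesis to $I\setminus\{j\}$, and feed $\bE_j$ and $\dcl(\bigcup_{i\neq j}\bE_i)$ into Lemma~\ref{lem:res-dense_ortho}~(2) and~(3). Your added observation that $\cO_j=\cO$ in the finite case is implicit in the paper's proof and is a helpful clarification of why the $\cO_j$-residually cofinal conclusion from the lemma is what is wanted.
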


\begin{definition}\label{def:F-res-dense}
    Let $\bE \preceq \bU \models T$ and $\cF$ be a family of $T$-convex valuation subrings of $\bU$ such that for all $\cO\neq \cO' \in \cF$, $\cO \cap \bE \neq \cO' \cap \bE$. We will call \emph{$\cF$-residually dense} any extension $\bE_*\preceq \bU$ of $\bE$ given as $\bE_*=\dcl \bigcup\{ \bE_\cO: \cO \in \cF\}$ where $\bE_\cO$ is $\cO$-residually dense over $\bE$.
\end{definition}

\begin{remark}\label{rmk:F-res-dense_iso_type}
    In the situation of Definition~\ref{def:F-res-dense}, by Corollary~\ref{cor:res-dense_ortho}, the isomorphism type of $\bE_*$ over $\bE$ only depends on the isomorphism types of the $\bE_\cO$ over $\bE$, which in turn by Lemma~\ref{lem:res_constructible_ext_iso_type} only depends on the isomorphism type of $\res_{\cO}(\bE_\cO)$ over $\res_{\cO}(\bE)$.
\end{remark}

\begin{remark}
    If $\bE_*$ is a $\cF$-residually dense extension of $\bE$ and $\cO$ is any $T$-convex valuation ring of $\bE$, then $\CH_{\bE_*}(\cO)$ is the unique extension of $\cO$ to a $T$-convex valuation ring of $\bE_*$.
    To see this, note first that the extension is unique if and only if $\bE_*$ does not realise the type $\bE < x < \bE^{>\cO}$, so we can reduce to the case where $\bE \preceq \bE_*$ is of finite $\dcl_T$-dimension. Note next that if $\bE \preceq \bE_\dagger \preceq \bE_*$, and $\cO$ has a unique extension to $\bE_\dagger$, say $\cO_\dagger$, and $\cO_\dagger$ has a unique extension to $\bE_*$, then $\cO$ must have a unique extension to $\bE_*$. Thus, we can reduce further to the case where $\bE\preceq \bE_*$ is $1$-dimensional. In particular, we may assume $\bE\preceq \bE_*$ is $\cO'$-residually dense for some $\cO' \in \cF$. If $\cO' \cap \bE \subsetneq \cO$, then Lemma~\ref{lem:res-dense_ortho} implies that the extension is $\cO$-residually cofinal, whence the result follows. If they are equal, the result follows from Fact~\ref{fact:cuts_tricho}. Finally, if $\cO' \cap \bE \supsetneq \cO$, then any $\cO'$-residually dense extension cannot realize the cut $\bE < x < \bE^{>\cO}$ because its realization would be $\cO''$-residual where $\cO''\coloneqq \{t \in \bU: |t|< \bE^{>\cO}\}\subsetneq \cO'$.
\end{remark}

\begin{corollary}\label{cor:type_orthogonal_basis}
    Let $\bE \preceq \bU \models T$ and $\cF$ be a family of $T$-convex valuation subrings of $\bU$. Every $\mathcal{F}$-residually dense extension $\bE \preceq \bE_*\preceq \bU$ has a $\dcl_T$ basis $B$ over $\bE$ such that for all $b \in B$, $\tp(b/\bE)\vdash \tp(b/\bE \langle B \setminus \{b\} \rangle)$.
\end{corollary}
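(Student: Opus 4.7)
The plan is to construct $B$ as $B := \bigcup_{\cO \in \cF} B_\cO$, where each $B_\cO$ is a well-chosen $\dcl_T$-basis of $\bE_\cO$ over $\bE$, and to combine the resulting local orthogonalities via Corollary~\ref{cor:res-dense_ortho}(2). In line with the convention of this section, the types in question are $L$-types.

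For each $\cO \in \cF$ I would fix an elementary residue section $\bK$ of $(\bE,\cO)$ and, using Lemma~\ref{lem:res-constructible_residue_sections}, extend it to an elementary residue section $\bK_\cO$ of $(\bE_\cO, \cO_\cO)$ with $\bE_\cO = \dcl(\bE \cup \bK_\cO)$. The residually dense hypothesis, combined with the isomorphism between $\bK_\cO/\bK$ and the residue field extension supplied by Fact~\ref{fact:res_iso_max}, ensures that $\bK \prec \bK_\cO$ is a dense elementary extension. Taking $B_\cO$ to be any $\dcl_T$-basis of $\bK_\cO$ over $\bK$, Remark~\ref{rmk:res-constructions_are_independent} makes $B_\cO$ also a $\dcl_T$-basis of $\bE_\cO$ over $\bE$.

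The technical core is the single-valuation claim that for each $b \in B_\cO$, $\tp_L(b/\bE) \vdash \tp_L(b/\bE\langle B_\cO \setminus \{b\}\rangle)$. Setting $\bK' := \bK\langle B_\cO \setminus \{b\}\rangle$ and $\bE' := \dcl(\bE \cup \bK')$, I would show that any $b'$ realising $\tp_L(b/\bE)$ in a saturated ambient satisfies $b' \equiv^L_{\bE'} b$ in three stages. Stage (a) is the trivial restriction $b' \equiv^L_\bK b$. Stage (b) uses density of $\bK \prec \bK_\cO$: for every $k' \in \bK' \subseteq \bK_\cO$ (necessarily distinct from $b$ by $\dcl_T$-independence) there is some $k \in \bK$ strictly between $k'$ and $b$; combined with $b' \equiv_\bK b$, this forces $b$ and $b'$ to realise the same cut over $\bK'$, and since $1$-types over $o$-minimal models are cut-determined, $b' \equiv^L_{\bK'} b$. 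Stage (c) transfers the cut equality from $\bK'$ to $\bE'$: by Lemma~\ref{lem:res-constructible_residue_sections} $\bK'$ is an elementary residue section of $(\bE', \cO \cap \bE')$, so $\bK' \preceq_\tame \bE'$ by Fact~\ref{fact:res_iso_max}; for $c' \in \CH_{\bE'}(\bK')$ the standard part $\std_{\bK'}(c')$ is defined with $c' - \std_{\bK'}(c')$ infinitesimal over $\bK'$, and density of $\bK' \prec \bK_\cO$ gives $b$ a cut in $\bK'$ with neither maximum below nor minimum above, so the relation $c' < b$ reduces to $\std_{\bK'}(c') < b$ (and likewise for $b'$); the cases $c' > \bK'$ and $c' < \bK'$ are immediate from $b, b' \in \CH_\bU(\bK')$, yielding $b' \equiv^L_{\bE'} b$.

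For the global assembly, let $B := \bigcup_\cO B_\cO$ and fix $b \in B_{\cO_0}$ with $b' \equiv^L_\bE b$. The single-valuation step provides an elementary $\bE$-isomorphism $\bE_{\cO_0} \to \bE\langle b', B_{\cO_0} \setminus \{b\}\rangle$ sending $b$ to $b'$ and fixing $B_{\cO_0} \setminus \{b\}$. Substituting this isomorphic copy for $\bE_{\cO_0}$ in Corollary~\ref{cor:res-dense_ortho}(2), whose conclusion $\bigcup_\cO \tp_L(\bE_\cO/\bE) \vdash \tp_L(\bigcup_\cO \bE_\cO/\bE)$ is preserved under such replacement, produces an elementary $\bE$-embedding of $\bE_*$ into the ambient that fixes $B \setminus \{b\}$ and sends $b$ to $b'$, whence $b' \equiv^L_{\bE\langle B \setminus \{b\}\rangle} b$. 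The main obstacle is stage (c) above, where the cut-analysis linking $\bK'$ to $\bE'$ must carefully combine tameness, the standard-part function, and density of $\bK' \prec \bK_\cO$; the remaining stages are essentially bookkeeping with results already established in the paper.
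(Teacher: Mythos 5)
Your proof is correct and follows what is essentially the paper's intended line of argument (the paper leaves this Corollary without a written proof, treating it as immediate from Lemma~\ref{lem:res-dense_ortho} and Corollary~\ref{cor:res-dense_ortho}). You correctly isolate the two ingredients: within each block $\bE_\cO$, choosing $B_\cO$ inside the residue section $\bK_\cO$ makes the density of $\res(\bE)\prec\res(\bE_\cO)$ directly give $\tp_L(b/\bE)\vdash\tp_L(b/\bE\langle B_\cO\setminus\{b\}\rangle)$; and across blocks, Corollary~\ref{cor:res-dense_ortho}(2) lets you substitute the isomorphic copy $\bE\langle b',B_{\cO_0}\setminus\{b\}\rangle$ for $\bE_{\cO_0}$ without changing the joint type, giving the conclusion. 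Stage (c), which you flag as the technical pivot, is handled correctly: density of $\bK'$ in $\bK_\cO$ and tameness of $\bK'$ in $\bE'$ together reduce the cut of $b$ over $\bE'$ to its cut over $\bK'$ via the standard part map, and the same works for $b'$ because $b'\equiv_{\bK'}b$ (established in stage (b)) transfers the ``no maximum below, no minimum above'' property of $b$'s cut to $b'$'s. The only thing worth stating a bit more explicitly is that $B_\cO\subseteq\bK_\cO$ is genuinely a $\cO$-res-construction over $\bE$ (so that Remark~\ref{rmk:res-constructions_are_independent} gives $\dcl_T$-independence over $\bE$, not just over $\bK$), but this follows at once from the isomorphism $\std_{\bK_\cO}:\bK_\cO\to\res(\bE_\cO)$ of Fact~\ref{fact:res_iso_max}(3). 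Overall a careful and complete reconstruction of the argument.
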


\begin{definition}[Tressl, \cite{tressl2006pseudo}]
    Let $\bE \models T$, $\cF$ be a family of $T$-convex valuation rings of $\bE$, $\bU \succ \bE$ be $|\bE|^+$-saturated, and $\cF'\coloneqq \{\CH_\bU(\cO): \cO \in \cF\}$. Let also for each $\cO\in\cF'$, $\bE_\cO$ be a maximal $\cO$-residually dense extension of $\bE$ in $\bU$.
    The $\cF'$-residually dense extension $\bE_*\coloneqq \dcl(\bigcup_{\cO \in \cF'} \bE_\cO)$ of $\bE$ is called a \emph{pseudo-completion} of $\bE$ with respect to $\cF$.
\end{definition}

\begin{corollary}[Tressl, 4.1 in \cite{tressl2006pseudo}]
    If $\cF$ is a family of $T$-convex valuation rings of $\bE$, then the pseudocompletion of $\bE$ with respect to $\cF$ is unique up to isomorphism over $\bE$.
    \begin{proof}
        Follows from Corollary~\ref{cor:Cauchy-completion_unique} and Remark~\ref{rmk:F-res-dense_iso_type}.
    \end{proof}
\end{corollary}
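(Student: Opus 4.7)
The plan is to reduce the uniqueness of the pseudo-completion to the uniqueness of Cauchy-completions (Corollary~\ref{cor:Cauchy-completion_unique}) one $\cO$ at a time, and then glue the resulting isomorphisms using Remark~\ref{rmk:F-res-dense_iso_type}.

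First I would show that if $\bE_\cO \preceq \bU$ is a maximal $\cO$-residually dense extension of $\bE$ in the $|\bE|^+$-saturated model $\bU$, then the induced residue field extension $\res_\cO(\bE) \preceq \res_\cO(\bE_\cO)$ is a Cauchy-completion of $\res_\cO(\bE)$. Density is built into the definition of residually dense; the content is the maximality among dense extensions. Suppose toward contradiction that $\res_\cO(\bE) \preceq \res_\cO(\bE_\cO) \prec \bK$ were a strict dense extension of $\res_\cO(\bE)$. Using Lemma~\ref{lem:res_constructible_ext_iso_type} applied over the base $\bE_\cO$ (with any $\dcl_T$-basis of $\bK$ over $\res_\cO(\bE_\cO)$ lifting to a res-construction), we produce a proper $\cO$-res-constructible extension $\bE_\cO \prec \bE_\cO^\dagger$ with $\res_\cO(\bE_\cO^\dagger) = \bK$; by $|\bE|^+$-saturation of $\bU$ we may realize $\bE_\cO^\dagger$ inside $\bU$ over $\bE_\cO$, contradicting the maximality of $\bE_\cO$.

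Next, let $\bE_*$ and $\bE_*'$ be two pseudo-completions of $\bE$ with respect to $\cF$, obtained respectively from components $\bE_\cO$ and $\bE_\cO'$ (for $\cO \in \cF'$, possibly inside two different saturated models $\bU$ and $\bU'$). By the previous step, for each $\cO \in \cF'$ the extensions $\res_\cO(\bE) \preceq \res_\cO(\bE_\cO)$ and $\res_\cO(\bE) \preceq \res_\cO(\bE_\cO')$ are both Cauchy-completions, so by Corollary~\ref{cor:Cauchy-completion_unique}~(2) they are isomorphic over $\res_\cO(\bE)$. Lemma~\ref{lem:res_constructible_ext_iso_type} then lifts this residue-field isomorphism to an $L_\convex$-isomorphism $\psi_\cO : \bE_\cO \to \bE_\cO'$ over $\bE$.

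Finally, by Remark~\ref{rmk:F-res-dense_iso_type}, the $L_\convex$-isomorphism type of a $\cF'$-residually dense extension over $\bE$ depends only on the isomorphism types of its components $\bE_\cO$ over $\bE$. Hence the family $(\psi_\cO)_{\cO \in \cF'}$ assembles into an isomorphism $\bE_* \to \bE_*'$ over $\bE$, proving the claim. The only non-trivial step is the first one; the potential obstacle is ensuring that any desired dense residue extension can indeed be realized inside the ambient saturated $\bU$, which is precisely what the $|\bE|^+$-saturation hypothesis is designed to supply.
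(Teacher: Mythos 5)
Your proposal is correct and unfolds exactly the route the paper's one-line proof indicates: identify the residue field of each maximal $\cO$-residually dense component as a Cauchy-completion, invoke Corollary~\ref{cor:Cauchy-completion_unique}~(2) to get residue-level isomorphisms, and transport them via Remark~\ref{rmk:F-res-dense_iso_type} (which already packages Lemma~\ref{lem:res_constructible_ext_iso_type}). Two small points of hygiene: Lemma~\ref{lem:res_constructible_ext_iso_type} is a uniqueness statement, not an existence one, so the construction of $\bE_\cO^\dagger$ should just be done directly by realizing types in $\bU$; and the saturation step deserves one more line, since the types to realize live over $\bE_\cO$, which may be larger than $\bE$ --- the resolution is that a dense cut over $\res_\cO(\bE_\cO)$ is determined by its trace on $\res_\cO(\bE)$ (because $\res_\cO(\bE)$ is dense and cofinal in $\res_\cO(\bE_\cO)$, compare Lemma~\ref{lem:dense_in_cofinal}), so only $|\bE|$ parameters are needed and $|\bE|^+$-saturation suffices.
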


\begin{proposition}\label{prop:pseudo-completions_subs}
     Let $\bE \prec \bU\models T$ and $\cF\coloneqq \{\cO_i : 0 \le i\le n\}$ be strictly increasing sequence of $T$-convex valuation rings of $\bU$ such that $\bE \cap \cO_i \neq \bE \cap \cO_j$ for all $i\neq j$ and $\cO_n\supseteq \bE$.
     For each $i\le n$, let $\bE_i$ be a $\cO_i$-residually dense extension of $\bE$ and let $\bE_*\coloneqq \dcl(\bigcup_{i \le n}\bE_i)$ be $\cF$-residually dense over $\bE$. Moreover, for each cofinal extension $\bK \preceq \bK'$ of models of $T$, denote by $d(\bK, \bK')$ the $\dcl_T$-dimension of $\bK'$ over the maximal dense extension of $\bK$ within $\bK'$. Then the following are equivalent:
    \begin{enumerate}
        \item if $\bE \preceq \bE_\dagger \preceq \bE_*$ and for all $i\le n$, $\res_{\cO_i}(\bE_\dagger)$ contains $\res_{\cO_i}(\bE_i)$, then $\bE_\dagger$ is isomorphic to $\bE_*$ over $\bE$;
        \item for all $0 \le i < n$, we have $d(\res_{\cO_{i+1}}(\bE_*),\res_{\cO_{i+1}}(\bE))\le \aleph_0$ or $\val(\bE_*, \cO_i)$ does not contain an uncountable well ordered subset.
    \end{enumerate}
    \begin{proof}
        $(2) \Rightarrow (1)$. We induct on $n$.
        Notice that since $\bE_*$ is a cofinal extension of $\bE$, it follows that $\bE_* \subset \cO_n$. Then, the hypothesis $\res_{\cO_n}(\bE_n) \subset \res_{\cO_n}(\bE_\dagger)$ directly implies that $\bE_n \subset \bE_\dagger$.
        So if $n=0$ we are done, and we can assume $n> 0$.

        By repeatedly applying Lemma~\ref{lem:res-dense_ortho}~(2), for all $0\leq i<n$ we know that \break $\dcl(\bE_n,\bE_0,\bE_2, \cdots \bE_i)$ is $\cO_{i}$-residually cofinal over $\bE_n$ so in particular $\bE_*$ is $\cO_{n-1}$-res-constructible over $\bE_n$. Now since $\dim_{\dcl}(\bE_*/\bE_n)\le \aleph_0$ or $\val_{\cO_{n-1}}(\bE_*)$ does not contain uncountable well ordered sets, by Theorem~\ref{thm:mainthm}, it follows that $\bE_\dagger$ is also $\cO_{n-1}$-res-constructible over $\bE_n$.
        
        Since by Lemma~\ref{lem:res_constructible_ext_iso_type} the isomorphism type of $\bE_\dagger$ over $\bE_n$ only depends on the extension $\res_{\cO_{n-1}}(\bE_\dagger)\succeq \res_{\cO_{n-1}}(\bE_n)=\res_{\cO_{n-1}}(\bE)$, it suffices to show that $\bE_\dagger'\coloneqq \res_{\cO_{n-1}}(\bE_\dagger)$ is isomorphic to $\bE_*'\coloneqq \res_{\cO_{n-1}}(\bE_*)$ over $\bE'\coloneqq \res_{\cO_{n-1}}(\bE)$. Now let for all $i<n$, $\cO_{n-1}'\coloneqq \res_{\cO_{n-1}}(\cO_i)$ and observe that for all $i<n$, $\res_{\cO_{i}'}(\bE_*')=\res_{\cO_{i}}(\bE_*)$, $\res_{\cO_{i}'}(\bE_\dagger')=\res_{\cO_{i}}(\bE_\dagger)$, and there is a natural inclusion $\val_{\cO_{i}'}(\bE_*') \subseteq \val_{\cO_{i}}(\bE_*)$.
        Thus we can apply the inductive hypothesis to conclude that $\bE_*'$ is isomorphic to $\bE_\dagger'$ over $\bE'$.

        $(\lnot 2) \Rightarrow (\lnot 1)$. Again we proceed by induction on $n$. Notice that if $n=0$, then (2) always holds, so we can assume $n> 0$. Thus let $i<n$ be the maximum $i$ such that $d(\res_{\cO_{i+1}}(\bE_*), \res_{\cO_{i+1}}(\bE))> \aleph_0$ and $\val(\bE_*, \cO_i)$ contains an uncountable well ordered subset.
        If $i=n-1$, then by Theorem~\ref{thm:mainthm_negative}, we can find $\bE_\dagger$ as in (1) which is not $\cO_{n-1}$-constructible over $\bE_n$. But then, if $\phi : \bE_* \rightarrow \bE_\dagger$ is an isomorphism over $\bE$, then $\phi$ must fix $\bE_n$.
        So if $\ox$ is a res-construction for $\bE_*$ over $\bE_n$, then $\phi(\ox)$ is a res-construction for $\bE_\dagger$ over $\bE_n$, a contradiction.
        If instead $i<n-1$, then $\dim_{\dcl}(\bE_*/\bE_n)\le \aleph_0$ or $\val(\bE_*,\cO_{n-1})$ does not contain an uncountable well ordered set, so by Theorem~\ref{thm:mainthm} any $\bE_\dagger$ as in (1) is $\cO_{n-1}$-res-constructible over $\bE_n$, and we can reduce to the inductive hypothesis. In fact we then have that the isomorphism type of $\bE_\dagger$ over $\bE_n$ is determined by the isomorphism type of $\bE_\dagger'\coloneqq \res_{\cO_{n-1}}(\bE_{\dagger})$ over $\bE'\coloneqq \res_{\cO_{n-1}}(\bE_n)=\res_{\cO_{n-1}}(\bE)$ and that, setting as before $\cO_i'\coloneqq  \res_{\cO_{n-1}}(\cO_i)$:
        \begin{itemize}
            \item For any $0\leq i\leq n-1$, $\res_{\cO_{i}'}(\bE_*')=\res_{\cO_{i}}(\bE_*)$, $\res_{\cO_{i}'}(\bE')=\res_{\cO_{i}}(\bE)$ and the cokernel of the natural inclusion $\val_{\cO_{i}'}(\bE_*') \subseteq \val_{\cO_{i}}(\bE_*)$ is $\val_{\cO_{n-1}}(\bE_*)$ which does not contain uncountable well orders by hypothesis, so $\val_{\cO_{i}'}(\bE_*')$ contains an uncountable well order if and only if $\val_{\cO_{i}}(\bE_*)$ does.
        \end{itemize}
        Thus, by induction there exists some $\bE_\dagger' \prec \bE_*'$, such that for all $0\leq i\leq n-1$, $\res_{\cO_i'}(\bE_i') \subset \res_{\cO_i'}(\bE_\dagger')$, and which is non-isomorphic to $\bE_*'$ over $\bE'$. Let $\ox$ be a tuple in $\bE_*$ such that $\res_{\cO_{n-1}}(\ox)$ is a $\dcl$-basis for $\bE_\dagger'$ over $\bE'$, and let $\bE_\dagger = \bE_n\langle \ox \rangle$. Then, for any $0 \leq i \leq n-1$, $\res_{\cO_i}(\bE_\dagger) = \res_{\cO_i'}(\bE_\dagger') \supset \res_{\cO_i'}(\bE_i') = \res_{\cO_i}(\bE_i)$, and $\bE_n \subset \res_{\cO_n}(\bE_\dagger)$, so $\bE_\dagger$ satisfies the conditions of (1). Finally, $\res_{\cO_{n-1}}(\bE_\dagger) = \bE_\dagger'$ is not isomorphic to $\res_{\cO_{n-1}}(\bE_*)$ over $\res_{\cO_{n-1}}(\bE)$, so in particular $\bE_\dagger$ is not isomorphic to $\bE_*$ over $\bE$.
    \end{proof}
\end{proposition}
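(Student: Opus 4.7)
The plan is to induct on $n$. For the base case $n = 0$, condition (2) is vacuous, and any intermediate $\bE_\dagger$ with $\res_{\cO_0}(\bE_\dagger) \supseteq \res_{\cO_0}(\bE_0) = \res_{\cO_0}(\bE_*)$ must satisfy $\bE_\dagger = \bE_*$, since otherwise an element of $\bE_* \setminus \bE_\dagger$ would contradict Lemma~\ref{lem:res_implies_no_weak_elements} applied to the res-constructible extension $\bE \preceq \bE_*$ (the residue field being already filled leaves no room for a residually generated element).

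For the inductive step of $(2) \Rightarrow (1)$, I would first observe that each $\bE_i$ is cofinal over $\bE$ by Lemma~\ref{lem:res_cof}, and since $\bE \subseteq \cO_n$, this gives $\bE_* \subseteq \cO_n$. Consequently $\bE_n$ embeds canonically into $\res_{\cO_n}(\bU)$, and the hypothesis $\res_{\cO_n}(\bE_n) \subseteq \res_{\cO_n}(\bE_\dagger)$ translates directly to $\bE_n \subseteq \bE_\dagger$. Iterating Lemma~\ref{lem:res-dense_ortho}(2) along the chain $\cO_0 \subsetneq \cdots \subsetneq \cO_n$, I deduce that $\bE_*$ is $\cO_{n-1}$-res-constructible over $\bE_n$. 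Condition (2) at index $n-1$, together with Theorem~\ref{thm:mainthm}, then upgrades this to $\bE_\dagger$ being $\cO_{n-1}$-res-constructible over $\bE_n$ as well. Lemma~\ref{lem:res_constructible_ext_iso_type} reduces the required isomorphism $\bE_\dagger \cong \bE_*$ over $\bE_n$ to the isomorphism $\res_{\cO_{n-1}}(\bE_\dagger) \cong \res_{\cO_{n-1}}(\bE_*)$ over $\res_{\cO_{n-1}}(\bE)$, and this last statement is exactly the inductive hypothesis applied to the residue-field picture with the $n-1$ valuation rings $\cO_i' := \res_{\cO_{n-1}}(\cO_i)$ for $i < n-1$.

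For the contrapositive $(\lnot 2) \Rightarrow (\lnot 1)$, I would pick the largest $i$ at which (2) fails. If $i = n-1$, I apply Theorem~\ref{thm:mainthm_negative} to the $\cO_{n-1}$-res-constructible extension $\bE_n \preceq \bE_*$ to produce an intermediate $\bE_\dagger$ with full $\cO_{n-1}$-residue field but which is not $\cO_{n-1}$-res-constructible over $\bE_n$; any hypothetical isomorphism $\bE_* \cong \bE_\dagger$ over $\bE$ would fix $\bE_n$ (by the canonical embedding argument above) and would carry a $\cO_{n-1}$-res-construction of $\bE_*$ over $\bE_n$ to one of $\bE_\dagger$, a contradiction. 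If $i < n-1$, then (2) holds at $n-1$, so the same argument as in the positive direction ensures $\bE_\dagger$ is always $\cO_{n-1}$-res-constructible over $\bE_n$; Lemma~\ref{lem:res_constructible_ext_iso_type} then reduces the problem to exhibiting a non-isomorphic intermediate residue-field extension via the inductive hypothesis, and one lifts it to $\bE_*$ by taking a $\dcl_T$-basis sitting under $\std_{\bK_*}$.

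The main technical obstacle will be verifying that the conditions in (2) transfer faithfully through $\res_{\cO_{n-1}}$. Specifically, one needs $\res_{\cO_i'}(\res_{\cO_{n-1}}(\bE_*)) = \res_{\cO_i}(\bE_*)$ (routine) and that the natural inclusion $\val_{\cO_i'}(\res_{\cO_{n-1}}(\bE_*)) \hookrightarrow \val_{\cO_i}(\bE_*)$, whose cokernel is $\val_{\cO_{n-1}}(\bE_*)$, preserves the presence or absence of uncountable well-ordered subsets. This last preservation holds exactly when $\val_{\cO_{n-1}}(\bE_*)$ itself contains no uncountable well order, which is precisely what (2) at index $n-1$ provides in the reduction regime.
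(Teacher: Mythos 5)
Your overall strategy — induction on $n$, observing that $\bE_n\subseteq\bE_\dagger$ via the injectivity of $\res_{\cO_n}$ on $\bE_*$, iterating Lemma~\ref{lem:res-dense_ortho}(2) to get $\cO_{n-1}$-res-constructibility of $\bE_*$ over $\bE_n$, transferring this to $\bE_\dagger$ via Theorem~\ref{thm:mainthm}, and then descending to the residue picture via Lemma~\ref{lem:res_constructible_ext_iso_type}, with Theorem~\ref{thm:mainthm_negative} supplying the converse at the top level — matches the paper's proof essentially line for line, including your careful note about the value-group cokernel $\val_{\cO_{n-1}}(\bE_*)$ being the sole obstruction to transferring condition~(2) through $\res_{\cO_{n-1}}$.

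However, your argument for the base case $n=0$ has a gap. You invoke Lemma~\ref{lem:res_implies_no_weak_elements} applied to the res-constructible extension $\bE\preceq\bE_*$, asserting that an element $x\in\bE_*\setminus\bE_\dagger$ would ``contradict'' that lemma because the residue field is already filled. But that lemma only tells you that $x$ is $\cO_0$-residually generated \emph{over $\bE$}, which is perfectly consistent with $\res_{\cO_0}(\bE_\dagger)=\res_{\cO_0}(\bE_*)$; there is no contradiction unless you know $x$ is residually generated \emph{over $\bE_\dagger$}, and for that you would need $\bE_\dagger\preceq\bE_*$ to be res-constructible, which is not yet available. In fact Lemma~\ref{lem:res_implies_no_weak_elements} is not needed here at all: the observation you make in the inductive step — that $\co_n\cap\bE_*=\{0\}$ (since $\bE_*$ is an elementary residue section of $(\bE_*,\cO_n\cap\bE_*)$ by Lemma~\ref{lem:res-constructible_residue_sections}), so $\res_{\cO_n}$ is injective on $\bE_*$ and hence $\res_{\cO_n}(\bE_n)\subseteq\res_{\cO_n}(\bE_\dagger)$ forces $\bE_n\subseteq\bE_\dagger$ — already finishes the base case directly, because for $n=0$ one has $\bE_0=\bE_*$ and thus $\bE_*\subseteq\bE_\dagger\subseteq\bE_*$. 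Replace the appeal to Lemma~\ref{lem:res_implies_no_weak_elements} by this injectivity argument and the proposal is sound.
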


\begin{remark}
    Notice that in Proposition~\ref{prop:pseudo-completions_subs}, the hypothesis that $\cO_n\supseteq \bE$ is not essential, as we can always assume that $\bE_n$ is the trivial extension. 
    Also notice that for all $i$, $\res_{\cO_i}(\bE_i)$ is going to be a maximal dense extension of $\res_{\cO_i}(\bE)$ in $\res_{\cO_i}(\bE_*)=\res_{\cO_i}\dcl(\bigcup_{j\le i} \bE_j)$. Thus \[d(\res_{\cO_i}(\bE_*), \res_{\cO_i}(\bE))=\sum_{j<i}\dim_{\dcl_T}(\bE_j/\bE).\]
    In particular in (2) of Proposition~\ref{prop:pseudo-completions_subs}, if there is $i<n$ such that 
    \[\dim_{\dcl_T}(\res_{\cO_{i+1}}(\bE_*)/ \res_{\cO_{i+1}}(\bE))\le \aleph_0,\]
    then for all $j\le i$, it follows that $d(\res_{\cO_{j+1}}(\bE_*), \res_{\cO_{j+1}}(\bE) )\le \aleph_0$.

\end{remark}

It is natural to ask whether the conditions on $\val_{\cO}(\bE_*)$ in point (2) of Proposition~\ref{prop:pseudo-completions_subs}, can be substituted with conditions on $\val_{\cO}(\bE)$, given that $\bE_*$ is a very ``narrow'' extension of $\bE$. We answer this conditionally to some properties of $T$.

\begin{lemma}\label{lem:pbdd_res_dense_ext_vg}
    Let $T$ be power bounded, $\bE\prec \bU \models T$, and $\cO, \cO'$ be $T$-convex valuation subrings of $\bU$. If $\bE_*\subseteq \bU$ is a $\cO'$-residually dense extension of $\bE$, then $\val_\cO(\bE)\cong\val_{\cO}(\bE_*)$.
    \begin{proof}
        When $\cO'\subseteq \cO$, then by Lemma~\ref{lem:res-constr_for_different_vals}, it follows that $\bE_*$ is $\cO$-res-constructible over $\bE$ and the thesis follows from the rv-property (here Fact~\ref{fact:rv-property}).
        Suppose instead $\cO' \supsetneq \cO$.
        Let $\bK \coloneqq  \res_{\cO'}(\bE)$, $\bK_*\coloneqq \res_{\cO'}(\bE_*)$, $V\coloneqq \res_{\cO'}(\cO)$.
        The inclusion $\val_{\cO'}(\bE) \subseteq \val_{\cO'}(\bE_*)$ induces a morphism of short exact sequences
        \[\begin{tikzcd}
            0 \ar[r] &\val_V(\bK)\ar[d] \ar[r] &\val_{\cO}(\bE) \ar[r]\ar[d]& \val_{\cO'}(\bE) \ar[r]\ar[d] &0\\
            0 \ar[r] &\val_V(\bK_*) \ar[r] &\val_{\cO}(\bE_*) \ar[r]& \val_{\cO'}(\bE_*) \ar[r] &0.
        \end{tikzcd}\]
        Now since $\bK_*$ is a dense extension of $\bK\coloneqq \res_{\cO'}(\bE)$, by Lemma~\ref{lem:res-dense_ortho_one}, the leftmost vertical map is an isomorphism. On the other hand by the case $\cO' \subseteq \cO$, proved above, also the rightmost vertical map must be an isomorphism. So, by the short five lemma, the middle vertical map is an isomorphism.   
    \end{proof}
\end{lemma}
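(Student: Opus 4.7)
The plan is a case split on the inclusion relation between $\cO$ and $\cO'$; since any two convex subrings of an ordered field are comparable, the two cases $\cO' \subseteq \cO$ and $\cO \subsetneq \cO'$ exhaust all possibilities.

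For the first case ($\cO' \subseteq \cO$), I would invoke Lemma~\ref{lem:res-constr_for_different_vals} to observe that any $\cO'$-res-construction of $\bE_*$ over $\bE$ is automatically also a $\cO$-res-construction, so that $\bE_*$ is $\cO$-res-constructible over $\bE$. By Lemma~\ref{lem:res_implies_no_weak_elements}, every element of $\bE_*\setminus \bE$ is then $\cO$-residually generated, and Fact~\ref{fact:cuts_tricho} rules out its being $\cO$-purely valuational. Power-boundedness then enters through Fact~\ref{fact:rv-property}: alternatives (1) and (3) of its trichotomy correspond respectively to wim and purely valuational elements, both already excluded, so only alternative (2) can hold, which pins down any such $x$ as, up to an affine $\bE$-transformation, a true residual element (one in $\cO\setminus \co$ with new residue), whose valuation is $0$ and hence lies in $\val_\cO(\bE)$. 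A straightforward induction along the res-construction then yields $\val_\cO(\bE_*) = \val_\cO(\bE)$. This is the main obstacle of the proof: it is precisely here that power-boundedness is essential, since without the rv-property one cannot exclude value-group growth along a residual extension.

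For the second case ($\cO \subsetneq \cO'$), I would pass to the coarsening $V := \res_{\cO'}(\cO)$, a $T$-convex valuation subring of $\res_{\cO'}(\bU)$ with $\res_\cO = \res_V \circ \res_{\cO'}$. The inclusion $\bE \subseteq \bE_*$ then induces a commutative morphism of short exact sequences
\[
\begin{tikzcd}
0 \ar[r] & \val_V(\res_{\cO'}(\bE)) \ar[r] \ar[d] & \val_\cO(\bE) \ar[r] \ar[d] & \val_{\cO'}(\bE) \ar[r] \ar[d] & 0 \\
0 \ar[r] & \val_V(\res_{\cO'}(\bE_*)) \ar[r] & \val_\cO(\bE_*) \ar[r] & \val_{\cO'}(\bE_*) \ar[r] & 0.
\end{tikzcd}
\]
The $\cO'$-residual density of $\bE_*$ over $\bE$ makes $\res_{\cO'}(\bE_*)$ a dense extension of $\res_{\cO'}(\bE)$, so Lemma~\ref{lem:res-dense_ortho_one} makes the leftmost vertical map an isomorphism; the first case, applied with $\cO'$ in both roles, makes the rightmost vertical map an isomorphism; and the short five lemma then gives the middle map as an isomorphism, as required.
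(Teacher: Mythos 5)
Your proof follows essentially the same route as the paper's: a dichotomy $\cO' \subseteq \cO$ vs.\ $\cO \subsetneq \cO'$, with the first case dispatched via Lemma~\ref{lem:res-constr_for_different_vals} and the residue-value property, and the second handled by passing to the coarsening $V = \res_{\cO'}(\cO)$ and applying the short five lemma to the evident morphism of short exact sequences of value groups. The paper's proof is in fact terser in the first case, just saying ``the thesis follows from the rv-property,'' so your unpacking is a useful supplement, not a deviation.

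One small caution in your case-$1$ argument. You correctly rule out alternatives (1) and (3) of Fact~\ref{fact:rv-property} and note that in alternative (2) the affine transform $d(x-c)$ has value $0 \in \val_\cO(\bE)$, but the inductive conclusion needs more than ``the new generator has value in $\val_\cO(\bE)$'': it needs $\val_\cO(\bF\langle y\rangle) = \val_\cO(\bF)$ for a residual $y$ over $\bF$, i.e.\ that \emph{no} element of $\bF\langle y\rangle$ acquires a new value. That is precisely the content of the residue-value property in its dimension-identity form ($\dim_{\dcl} = \dim_{\dcl}\res + \dim_\Lambda\val$ for finitely generated extensions in the power-bounded case), which is what ``the rv-property'' refers to in the paper's citation to Tyne and van den Dries--Speissegger, and what makes the induction actually close; the trichotomy of Fact~\ref{fact:rv-property} as literally stated would not alone suffice. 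Since the paper invokes the rv-property at the same level of generality, this is not a gap relative to the paper, but it is the point where a reader might want you to be explicit that the residue-value property for residual generators is what powers the induction.
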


\begin{corollary}
    Let $T$ be power bounded, $\bE\prec \bU \models T$, and $\cF$ be a family of $T$-convex valuation rings of $\bU$ such that $\cO\cap \bE \neq \cO' \cap \bE$ for all $\cO \neq \cO'$ in $\cF$. Suppose that $\bE_*$ is a $\cF$-residually dense extension of $\bE$. Then $\val(\bE, \cO)=\val(\bE_*, \cO)$ for all $T$-convex valuation rings $\cO$.
    \begin{proof}
        By definition of $\cF$-residually dense extension there are $\cO$-residually dense extensions $\bE_\cO$ for all $\cO \in \cF$, such that $\bE_*=\dcl(\bigcup_{\cO \in \cF}\bE_\cO)$.
        Notice that by the finite character of $\dcl$ we can reduce to the case in which $\cF=\{\cO_i: i\le n\}$ for some $n\in \omega$ and without loss of generality $\cO_i \supsetneq \cO_j$ for all $0 \le i < j \le n$.
        Then set $\bE_i\coloneqq \dcl(\bigcup_{i<j} \bE_{\cO_j})$ for $0 \le i\le n+1$ and notice that by Corollary~\ref{cor:res-dense_ortho}, each $\bE_{i+1}$ is a $\cO_i$-residually dense extension of $\bE_i$, moreover $\bE_0=\bE$ and $\bE_{n+1}=\bE_*$. At this point it suffices to apply Lemma~\ref{lem:pbdd_res_dense_ext_vg} to conclude that for any $T$-convex valuation ring $\cO'$, $\val_{\cO'}(\bE_i)=\val_{\cO'}(\bE)$.
    \end{proof}
\end{corollary}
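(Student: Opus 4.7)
The plan is to reduce the statement to the single-valuation case already handled by Lemma~\ref{lem:pbdd_res_dense_ext_vg}, by unfolding the $\cF$-residually dense extension into a tower of one-valuation residually dense steps. By definition, $\bE_*=\dcl\bigl(\bigcup_{\cO\in\cF}\bE_{\cO}\bigr)$ for some family of $\cO$-residually dense extensions $\bE_{\cO}\preceq \bU$. Any element of $\bE_*$ lies in $\dcl$ of $\bE$ together with finitely many of the $\bE_{\cO}$'s, so by finite character of $\dcl_T$ every individual valuation in $\val_{\cO'}(\bE_*)$ is witnessed by a finite subfamily of $\cF$, and I may assume $\cF=\{\cO_0,\dots,\cO_n\}$ is finite. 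Since $T$-convex subrings of $\bE$ are pairwise comparable, the hypothesis on distinct restrictions lets me reindex so that $\cO_0\cap\bE \supsetneq \cO_1\cap\bE \supsetneq\cdots\supsetneq \cO_n\cap\bE$.

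Next I would build the tower $\bE=\bE_0\preceq \bE_1\preceq\cdots\preceq \bE_{n+1}=\bE_*$ by adjoining the $\bE_{\cO_k}$'s largest-first, setting $\bE_{k+1}:=\dcl(\bE_k\cup\bE_{\cO_k})$. The key point is that each step $\bE_k\preceq \bE_{k+1}$ is $\cO_k$-residually dense over $\bE_k$: the prior adjunctions for $j<k$ are $\cO_j$-residually dense with $\cO_j\cap\bE\supsetneq \cO_k\cap\bE$, so by Lemma~\ref{lem:res-dense_ortho}~(1) they preserve $\res_{\cO_k}$ of the base, whence by Remark~\ref{rmk:res_constr_lift} the original $\cO_k$-res-construction of $\bE_{\cO_k}$ over $\bE$ remains a $\cO_k$-res-construction over $\bE_k$ and the density of the residue-field extension transfers. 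Equivalently, one may cite Corollary~\ref{cor:res-dense_ortho} applied inside $\bE_k$ to the singleton family $\{\cO_k\}$.

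Finally, fixing an arbitrary $T$-convex valuation ring $\cO'$ of $\bU$, I would apply Lemma~\ref{lem:pbdd_res_dense_ext_vg} at each stage of the tower: power-boundedness of $T$ yields $\val_{\cO'}(\bE_k)=\val_{\cO'}(\bE_{k+1})$ for $k=0,\dots,n$, and composing these equalities gives $\val_{\cO'}(\bE)=\val_{\cO'}(\bE_*)$. The only real obstacle is the bookkeeping in the tower construction — specifically the insistence on adjoining the $\bE_{\cO_k}$ in the order of decreasing $\cO_k\cap\bE$, without which the single-step residually dense structure needed to invoke Lemma~\ref{lem:pbdd_res_dense_ext_vg} would fail; no new conceptual work is required beyond the orthogonality results of Lemma~\ref{lem:res-dense_ortho} and Corollary~\ref{cor:res-dense_ortho}.
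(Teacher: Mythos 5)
Your proof is correct and follows essentially the same route as the paper's: reduce to a finite subfamily by finite character of $\dcl_T$, reindex so the $\cO_i$ are strictly decreasing, build the tower by adjoining the $\bE_{\cO_i}$ largest valuation ring first so that each step is single-valuation residually dense, and then apply Lemma~\ref{lem:pbdd_res_dense_ext_vg} step by step. Your unpacking of why each tower step is $\cO_k$-residually dense (via Lemma~\ref{lem:res-dense_ortho}~(1) and Remark~\ref{rmk:res_constr_lift}) is in fact more explicit than the paper's terse citation of Corollary~\ref{cor:res-dense_ortho}, and is the right justification.
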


\begin{remark}\label{rmk:exponential_ctbl_condition}
    When $T$ is not power-bounded, for a model $(\bE, \cO)\models T_\convex$, we have that $\val_\cO(\bE)$ contains an uncountable well ordered subset if and only if $\bE$ itself contains an uncountable well ordered subset.

    In fact if $\bE$ contains an uncountable well ordered set, then either $\res_\cO(\bE)$ or $\val_\cO(\bE)$ must contain an uncountable well ordered set (argue as in Lemma~\ref{lem:countable_residual_cuts}). On the other hand if $\bE$ defines an exponential and $\res_{\cO}(\bE)$ contains an uncountable well ordered set $S$ and $d \in \bE^{>\cO}$, then $\val_\cO(\exp(Sd))$ is an uncountable well ordered set.

\end{remark}

Thus, in view of Remark~\ref{rmk:exponential_ctbl_condition}, when $T$ is not power-bounded, to see whether the condition that $\val_{\cO_i}(\bE)$ does not contain any uncountable well ordered set implies that $\val_{\cO_i}(\bE_*)$ does not contain any uncountable well ordered set we can just ask the same question about $\bE$ and $\bE_*$. This leads to consider the following property.

\begin{definition}
    We say that a complete o-minimal theory $T$ has \emph{property C} if for every $\bE \models T$, if $\bE$ does not contain any uncountable well ordered set, then so does any $1$-$\dcl_T$-dimensional extension $\bE \langle x \rangle$ of $\bE$.
\end{definition}

\begin{lemma}
    Let $\bE \preceq \bU \models T$, $\cF$ be a family of $T$-convex subrings of $\bU$ such that $\cO \cap \bE \neq \cO' \cap \bE$ for all $\cO \neq \cO'$ in $\cF$, and let $\bE_* \preceq \bU$ be $\cF$-residually dense over $\bE$. Suppose furthermore that $T$ has property C. If $\bE_*$ contains an uncountable well-ordered set, then so does $\bE$.
    \begin{proof}
        Suppose toward contradiction that $\bE_*$ contains an uncountable well-ordered set and $\bE$ does not.

        By Corollary~\ref{cor:type_orthogonal_basis} we can find a $\dcl_T$-basis $B$ of $\bE_*$ over $\bE$ such that for all $b \in B$, $\tp(b/\bE)\vdash \tp(b/\bE \langle B \setminus \{b\} \rangle)$.

        Let $(y_i)_{i<\aleph_1}$ be a sequence in $\bE_*$ such that $y_i$ is strictly increasing. Since $\bE$ does not contain an uncountable well ordered set, there must be $i_0$ such that for all $i>i_0$, $\tp(y_i/\bE)=\tp(y_{i_0}/\bE)$.
        
        For each $i$, write $y_i\coloneqq f_i(\overline{b}^{(i)})$ for some tuple $\overline{b}^{(i)}$ from $B$ and some $\bE$-definable function $f_i$.
        The orthogonality property of $B$ entails that there is a non-empty $B_0\subseteq B$ such that for every $\bE$-definable $f$, if $\tp(f(\overline{b})/\bE)=\tp(y_{i_0}/\bE)$ for some tuple $\overline{b}$ from $B$, then $\overline{b}\supseteq B_0$. Thus all tuples $\overline{b}^{(i)}$ for $i>i_0$ must contain $B_0$.

        Since $B_0$ is finite and thus by property C, $\bE \langle B_0\rangle$ still does not contain an uncountable well-ordered set, we can repeat the argument on the sequence $(y_i)_{i_0\le i<\aleph_1}$, but this time over $\bE \langle B_0\rangle$, instead of $\bE$.
        
        Thus by induction we get an increasing sequence $(i_{n}: n < \omega)$ in $\aleph_1$, such that for $i\ge i_n$, $\overline{b}^{(i)}$ must contain more than $n$ distinct elements from $B$. Since $\{i_{n}: n < \omega\}$ is bounded in $\aleph_1$ we get a contradiction.
    \end{proof}
\end{lemma}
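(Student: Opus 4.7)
The plan is to prove the contrapositive: assume $\bE$ contains no uncountable well-ordered subset and deduce the same for $\bE_*$. Fix a hypothetical strictly increasing sequence $(y_i)_{i<\aleph_1}$ in $\bE_*$. By Corollary~\ref{cor:type_orthogonal_basis} pick a $\dcl_T$-basis $B$ of $\bE_*$ over $\bE$ with $\tp(b/\bE)\vdash\tp(b/\bE\langle B\setminus\{b\}\rangle)$ for every $b\in B$, and for each $i$ write $y_i=f_i(\bar b^{(i)})$ for some finite subtuple $\bar b^{(i)}$ of $B$ and some $\bE$-definable function $f_i$.

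First I would stabilise the types $\tp(y_i/\bE)$ on a tail. In an $o$-minimal theory, a $1$-type over $\bE$ is determined by a cut of $\bE$, and the cuts $C_i:=\{c\in\bE:c<y_i\}$ are non-decreasing in $i$. If uncountably many were distinct, I would extract an uncountable strictly increasing chain $(C'_\alpha)_{\alpha<\omega_1}$ of cuts and attach to each successor step $\alpha\mapsto\alpha+1$ an element of $\bE$: either some $e_\alpha\in C'_{\alpha+1}\setminus C'_\alpha$ when such an element exists, or the unique boundary $c_\alpha\in\bE$ in the remaining case $\{C'_\alpha,C'_{\alpha+1}\}=\{c_\alpha^-,c_\alpha^+\}$. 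A direct comparison shows these chosen elements form an uncountable strictly increasing sequence in $\bE$, contradicting the hypothesis. Hence there exists $i_0<\aleph_1$ with $\tp(y_i/\bE)=\tp(y_{i_0}/\bE)=:p$ for all $i\geq i_0$.

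The crux is to exhibit a non-empty finite $B_0\subseteq B$ with the property that every realization $y'$ of $p$ in $\bE_*$ written as $y'=f(\bar b')$ with $\bar b'\subseteq B$ satisfies $\bar b'\supseteq B_0$. Granted this, iterating property C finitely many times gives that $\bE\langle B_0\rangle$ still has no uncountable well-ordered subset, and the orthogonality hypothesis descends to $B\setminus B_0$ over $\bE\langle B_0\rangle$: for $b\in B\setminus B_0$, $\tp(b/\bE\langle B_0\rangle)$ refines $\tp(b/\bE)$, which already implies $\tp(b/\bE\langle B\setminus\{b\}\rangle)$ and hence $\tp(b/\bE\langle B_0\rangle\langle(B\setminus B_0)\setminus\{b\}\rangle)$. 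Running the whole argument inductively produces indices $i_0<i_1<\cdots<\aleph_1$ and pairwise disjoint non-empty finite $B_0^{(n)}\subseteq B$ with $\bigcup_{m\leq n}B_0^{(m)}\subseteq\bar b^{(i)}$ for all $i\geq i_n$. Setting $j:=\sup_n i_n<\aleph_1$, the finite tuple $\bar b^{(j)}$ would have to contain the infinite set $\bigcup_n B_0^{(n)}$, a contradiction.

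The main obstacle is producing the non-empty common support $B_0$. The natural candidate is the intersection, over all realizations $y'$ of $p$ in $\bE_*$ and all minimal $B$-supports $\bar b'$ of $y'$, of those $\bar b'$. Non-emptiness should follow from type orthogonality: if every $b\in B$ could be dropped from the support of some realization of $p$, then combining such representations along the independent structure that orthogonality forces on $B$ over $\bE$ should produce a realization of $p$ already inside $\bE$, contradicting $y_{i_0}\notin\bE$. Making this rigidity argument precise is where the orthogonality condition does its real work.
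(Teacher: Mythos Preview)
Your proposal follows essentially the same approach as the paper's proof: stabilise the $1$-types $\tp(y_i/\bE)$ on a tail, use the orthogonal basis $B$ from Corollary~\ref{cor:type_orthogonal_basis} to extract a non-empty finite common support $B_0$ that every realization of the stabilised type must use, invoke property~C to pass to $\bE\langle B_0\rangle$, and iterate to force the supports $\bar b^{(i)}$ to grow without bound, contradicting finiteness at $i=\sup_n i_n$. You supply more detail than the paper on the type-stabilisation step (the cut argument) and on why orthogonality of $B$ descends to $B\setminus B_0$ over $\bE\langle B_0\rangle$; the paper leaves both implicit. Conversely, your argument for the non-emptiness of $B_0$ is only a sketch, as you yourself flag, but the paper is equally terse there, simply asserting that ``the orthogonality property of $B$ entails'' it.
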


\begin{corollary}
    If $T$ is power bounded or has property C, then in Proposition~\ref{prop:pseudo-completions_subs}(2) we can replace ``$\val(\bE_*, \cO_i)$ does not contain an uncountable well ordered set'' with ``$\val(\bE, \cO_i)$ does not contain an uncountable well ordered set''.
\end{corollary}

We conclude with the following theorem, showing that every countable complete o-minimal theory has property C.

\begin{theorem}
    Let $T$ be a complete \emph{countable} o-minimal theory and suppose that $\bK\models T$ does not contain any uncountable well-ordered set. Suppose that $\bK\langle x \rangle\succ \bK$ is an elementary extension of $\bK$ of $\dcl_T$-dimension $1$. Then $\bK \langle x \rangle$ does not contain any uncountable well-ordered set.
    \begin{proof}
        Assume the contrary for contradiction that there is a strictly increasing $\aleph_1$-indexed sequence $(z_\alpha)_{\alpha<\aleph_1}$ in $\bK \langle x\rangle$.

        Since $T$ is countable, up to extracting a cofinal subsequence of $(z_\alpha)_{\alpha<\aleph_1}$, we can assume that there are $n \in \bN$, a countable $\bP\prec \bK$, an $(n+1)$-ary $\bK_0$-definable function $f$ and a sequence $(\overline{a}^{(\alpha)})_{\alpha<\aleph_1}$ of elements of $\bK^n$ such that $(f(\overline{a}^{(\alpha)}, x): \alpha<\aleph_1)$ is strictly increasing.

        We can also assume that such data was picked so that $n$ is minimum.

        Now let $\bK_0 \prec \bK$ be such that $\tp(x/\bP)\models \tp(x/\bK)$, $\bP \subseteq \bK_0$, and $|\bK_0|<\aleph_0$. Such a $\bK_0$ exists because $\bK$ does not contain any uncountable well order and thus there are countable subsets $A, B\subseteq \bK$ such that $A$ is cofinal in $\bK^{<x}$ and $B$ is coinitial in $\bK^{>x}$, then $\bK_0\coloneqq \bP\langle A\cup B\rangle$ satisfies the requirements. Finally for all $\alpha< \aleph_1$ set $\bK_\alpha \coloneqq \bK \langle \overline{a}^{(\beta)}: \beta < \alpha\rangle$.

        \begin{claim}
            If for some $\alpha <\aleph_1$, $\{a_1^{(\beta)}: \beta < \aleph_1\}$ realizes uncountably many distinct types over $\bK_\alpha$, we reach a contradiction.
            \begin{proof}
                We first build inductively a cofinal subsequence of $(\overline{a}^{(i(\beta))}: \beta <\aleph_1)$ of $(\overline{a}^{(\beta)}: \beta <\aleph_1)$ as follows.

                Suppose that $(i(\beta): \beta<\gamma)$ has been determined. Since $\bK_\alpha\cup \bigcup_{\beta<\gamma}\bK_{i(\beta)}$ is countable, by the claim assumption, there must be some $\delta<\alpha$ such that $a_1^{(\delta)}$ realizes a type over $\bK_\alpha$ that was not realized in $\bK_\alpha \cup \bigcup_{\beta<\gamma}\bK_{i(\beta)}$. Set then $i(\gamma)\coloneq\delta$.

                Now we show that we can inductively construct a sequence $(\overline{b}^{(\beta)}:\beta< \aleph_1)$ s.t.\ $f(\overline{b}^{(\beta)}, x)$ is strictly increasing and for each $\beta<\aleph_1$, $b_1^{(\beta)} \in \bK_\alpha$ and there is some $\delta$ s.t.\ $f(\overline{b}^{(\beta)}, x)<f(\overline{a}^{(\delta)},x)$.

                Assume that $(\overline{b}^{(\beta)}: \beta<\gamma)$ was constructed, then we can find $\delta$ such that for all $\beta<\gamma$, $f(\overline{b}^{(\beta)}, x)<f(\overline{a}^{(i(\delta))},x)$. Set $M\coloneqq \bK_\alpha \langle \overline{a}^{i(\delta)}, \overline{a}^{i(\delta+2n+2)}\rangle$. Clearly $\dim_{\dcl_T}(M/\bK_\alpha)\le 2n$, but by construction $\{\tp(a_1^{(i(\delta+j))}(\var{x}_i)/\bK_\alpha): 1 \le j \le 2n+1\}$ are setwise orthogonal types, so any extension of $\bK_\alpha$ realizing all of them must have dimension at least $2n+1$, whence there is some $j \in \{1, \ldots, 2n +1\}$ such that $\tp(a_1^{i(\delta+j)}/\bK_\alpha)$ is not realized in $M$.

                Now let $\phi(y,r,s)$ be the $M$-formula
                \[\phi(y,r,s) = \exists \overline{z},\, \forall w \in [r,s],\; f(\overline{a}^{(i(\delta))}, w)<f(y, \overline{z},w)<f(\overline{a}^{(i(\delta+2n+2))},w).\]
                There is an interval $[r,s]$ containing $x$, s.t. $\bK \models \phi(a_1^{i(\delta+j)}, r,s)$ and since $\bK_\alpha\supseteq \bK_0$, and $\tp(x/\bK_0)\vdash \tp(x/\bK)$ we can assume that $[r,s]$ is $\bK_\alpha$-definable.

                Now since $\tp(a_1^{i(\delta+j)}/\bK_\alpha)$ is not realized, in $M$ there is an $M$-definable interval $[a,b]$ containing $a_1^{i(\delta+j)}$ such that $\bK\models \forall t \in [a,b],\; \phi(t,r,s)$.

                We pick $b_1^\gamma \in [a,b] \cap \bK_\alpha$ and note that since $\bK\models \phi(b_1^{\beta},r,s)$, there are $b_2^{(\gamma)}, \ldots, b_{n}^{(\gamma)}$ such that for all $w \in [r,s]$,
                \[f(\overline{a}^{(i(\delta))},w)<f(\overline{b}^{(\gamma)})<f(\overline{a}^{(i(\delta+2n+2))},w),\]
                whence the chosen tuple $\overline{b}^{\gamma}$ satisfies the requirements.

                Finally we observe that since each $b_1^{\alpha}$ is in $\bK_\alpha$ and $\bK_\alpha$ is countable, there must be a cofinal subsequence $(\overline{b}^{(l(\beta))}:\beta<\aleph_1)$ of $(\overline{b}^{(\beta)}:\beta<\aleph_1)$ with $b_1^{(l(\beta))}=b_1^{(l(0))}$ for all $\beta<\aleph_1$, thus setting $f_2(\overline{z},w)\coloneqq f(b_1^{l(0)}, \overline{z},w)$ we have that $\bK_\alpha$ is countable, $f_2$ is $n$-ary and $\bK_\alpha$-definable, and $(f_2(b_2^{(l(\beta))}, \ldots, b_n^{(l(\beta))}, x):\beta<\aleph_1)$ is strictly increasing, thus contradicting the minimality of $n$.
            \end{proof}
        \end{claim}

        \newcommand{\TP}{\mathrm{TP}}
        Thus, we can reduce to the case where over every $\bK_\alpha$, the set $\{a_1^{\beta}:\beta<\aleph_1\}$ realises at most $\aleph_0$ many types.
        Let 
        \[\TP_{\aleph_1}(K_\alpha)=\big\{p \in S_1(\bK_\alpha): |\{a_1^\beta: \beta < \aleph_1,\, p(a_1^\beta)\}|>\aleph_0\big\}\]
        be the collection of types over $\bK_\alpha$ that have uncountably many realizations in $\{a_1^\beta: \beta < \aleph_1\}$. 
        Then, consider the function
        \begin{align*}
        g : \aleph_1 &\rightarrow \{0\} \cup \mathbb{N} \cup \{\aleph_0\}\\
        \alpha &\mapsto |\TP_{\aleph_1}(K_\alpha)|.
        \end{align*}
    
        Note that $g$ is increasing, as for any $\alpha<\beta$, every type $p \in TP_{\aleph_1}(\bK_\alpha)$ must extend to at least one type $p'$ over $\bK_{\beta}$, and further it must extend to one in $\TP_{\aleph_1}(K_\beta)$, by Claim~1. We note also that no two distinct types over $\bK_\alpha$ can extend to the same one over $\bK_\beta$. This implies $g$ is eventually constant. Also, we know $g$ is always at least $1$. Now consider two cases (either the eventual value of $g$ is a finite cardinal, or $\aleph_0$) and reach a contradiction for both cases.

        \begin{claim}
        If $g$ is eventually identical to the constant function $m$, for some $m \in \mathbb{N}$, we reach a contradiction.
        \end{claim}
        \begin{proof}
            Suppose toward contradiction that $g$ eventually equals $m \in \bN$. Let $g(\alpha) = m$. Let $p_\alpha \in \TP_{\aleph_1}(K_\alpha)$.
            In this case, by a pigeonhole argument, for each $\alpha<\beta < \aleph_1$, $p_\alpha$ must have a unique extension $p_\beta \in \TP_{\aleph_1}(K_\beta)$. Notice however that since $\bK$ does not contain uncountable well orders, there must be $\beta$ such that for all $\gamma>\beta$, $p_\beta \vdash p_\gamma$. But since no $p_\gamma$ is isolated, we would have that for every $\gamma>\beta$,  $p_\beta(y) \vdash p_\gamma(y) \vdash y \notin \bK_\gamma$ and we would have $p_\beta (\bK)=\emptyset$, yielding a contradiction.
        \end{proof}

        \begin{claim}
            If $g$ is eventually identical to the constant function $\aleph_0$, we reach a contradiction.
        \end{claim}
        
        \begin{proof}
        Suppose so. By enlarging $\bK_0$, we may assume $g$ is always equal to $\aleph_0$. We will inductively construct an increasing sequence of countable ordinals $(C_\alpha)_{\alpha<\aleph_1}$, and a sequence of tuples $(\ob^{(\alpha)})_{\alpha<\aleph_1}$ such that for all $\alpha$,
        \begin{enumerate}
        \item $b_1^{(\alpha)} \in \bigcup_{\beta<\alpha} \bK_{C_\beta}$.
        \item For all $\beta<\alpha$,
        \[f(\oa^{(C_\beta)},x) < f(\ob^{(\alpha)},x) \quad \text{and} \quad f(\ob^{(\alpha)},x)<f(\oa^{(C_\alpha)},x).\]
        \end{enumerate}
        Note that these conditions are sufficient for $f(\ob^{(\alpha)},x)$ to have order type $\aleph_1$.
        
        Suppose that for some $\alpha$, we have built $(\overline{b}^{(\beta)}:\beta<\alpha)$ and $(C_\beta: \beta<\alpha)$ satisfying the requirements.
        
        Let $M \coloneqq \bigcup_{\beta<\alpha} \bK_{C_\beta}$. By the assumption on $g$, $\TP_{\aleph_1}(M)$ has cardinality $\aleph_0$.
        Choose $i_2>i_1\geq \sup_{\beta<\alpha}(C_\beta)$ such that the set $\{a_1^{(\gamma)} : i_1<\gamma<i_2\}$ realises every element of $\TP_{\aleph_1}(M)$. Let the realising indices be $\delta_1,\delta_2, \ldots$. Consider the following first-order formula
        \[
        \phi(y,r,s) = \exists \oz. \forall w \in [r,s]. f(\oa^{(i_1)},w)<f(y,\oz,w)<f(\oa^{(i_2)},w)
        \]
        Note that since $x\notin \bK$, for every $i_1<\gamma<i_2$, there are $r,s \in \bK$ such that $\bK \models \phi(a_1^{\gamma},r,s)$.
        Note also recall that $\tp(x/\bK_0) \vdash \tp(x/\bK)$, so we can assume $r,s \in \bK_0$.
        By o-minimality, there is some constant $N$ such that for any fixed $r,s$, $\phi(y,r,s)$ defines a subset of $\bK$ consisting of at most $N$ points and intervals.
        Choose some $r,s \in \bK_0$ close enough to $x$ so that $\bK \models \phi(a_1^{(\delta_l)},r,s)$ for all $1\leq l \leq N+1$. Thus, since these are $N+1$ distinct points, two of them must be in the same $\bK$-definable interval $I$ such that $\bK \models \forall t \in I,\,\phi(t,r,s)$. But since these two have distinct cuts over $M$, $I$ must contain some point in $M$.
        We will suggestively call this point $b_1^{(\alpha)}$. Then, choose points $b_2^{(\alpha)}, \ldots,  b_n^{(\alpha)}$ in $\bK$, witnessing the existential quantifier in $\phi(b_1^{(\alpha)},r,s)$, i.e.\ such that
        \[\forall w \in [r,s],\; f(\oa^{(i_1)},w)<f(b_1^{(\alpha)}, b_2^{(\alpha)},\ldots, b_{n}^{(\alpha)},w)<f(\oa^{(i_2)},w).
        \]
        Note that since $f(\overline{b}^{(\alpha)},x)>f(\overline{a}^{(i_1)},x)$, for all $\beta<\alpha$, $f(\overline{b}^{(\alpha)},x)>f(\overline{a}^{C_\beta},x)$, as desired. Thus, if we set $C_\alpha=i_2$, $\overline{b}^{(\alpha)}$ and $C_\alpha$ fulfil (2) above. Also observe that each $b_1^{(\alpha)}$ is definable over $\bigcup_{\beta<\alpha} \bK_{C_\beta}$, so we have (1).
        
        Since definable closure is finitary, for each $\alpha>0$, there is some $h(\alpha)<\alpha$ such that $b_1^{(\alpha)}\in\bK_{C_{h(\alpha)}}$. Thus, since the resulting function $h:\aleph_1\to \aleph_1$ is strictly below the identity, by Fodor's Lemma there is some $C_\alpha$ such that $|h^{-1}(C_\alpha)|>\aleph_0$. 
        This implies that by taking a cofinal subsequence of $(\ob^{(\alpha)} : \alpha<\aleph_1)$, we may reduce to the case where $b_1^{(\alpha)}\in \bK_{C_\alpha}$ for all $\alpha$. But then, by extracting a further cofinal subsequence, we may reduce to the case where all the $b_1^{(\alpha)}$ are equal. As in Claim~1, this contradicts the minimality of $n$.
    \end{proof}
    All cases lead to a contradiction, so the theorem is proved.
    \end{proof}
\end{theorem}

\begin{remark}
    A way to restate the result is the following. If $T$ is a countable o-minimal theory, then for each model $\bE \models T$, $S_1(\bE)$ is first-countable if and only if $S_n(\bE)$ is first-countable for all $n$. 
\end{remark}

\section{When is an extension res-constructible?}\label{sec:removing_ambient_res_construction}
Before this point, we have always been working with an `ambient' res-constructible field (or in Section \ref{sec:Tressl_problem} the $\dcl$ of many res-constructible fields), and considering subfields. Now we examine what happens when this structure is removed. The bulk of this section will be consumed examining a particularly pathological extension of RCFs, which `looks' res-constructible locally, but admits no res-construction. Throughout this section, we will always be working with valued fields with no uncountable well-ordered subset in the value group.

\begin{definition}
Given $\bE \prec \bE_*$, and an intermediate $\bE \prec \bE_1 \prec \bE_*$, say that $\bE_1$ is \textit{weakly orthogonal} in $\bE_*$ iff for any finite extension $\bE_1 \prec \bE_2 \prec \bE_*$, we have that $\bE_1 \prec \bE_2$ is res-constructible.
\end{definition}

\begin{proposition}\label{prop:necessary_condition}
Let $(\bE,\cO) \prec (\bE_*,\cO_*)$ be res-constructible. Then, given any countable subset $S \subset \bE_*$, there is a countable extension $S \subset T \subset \bE_*$ such that $\bE\langle T \rangle$ is weakly orthogonal.
\end{proposition}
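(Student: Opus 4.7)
The strategy is to observe that the given res-construction of $\bE_*$ over $\bE$ can be split at any countable subset. By hypothesis there is a $\cO_*$-res-construction $\overline{r} = (r_i : i \in I)$ with $\bE_* = \bE\langle \overline{r}\rangle$. Since $\dcl_T$ is finitary, each $s \in S$ lies in $\bE\langle r_i : i \in F_s\rangle$ for some finite $F_s \subseteq I$. I would then set $J := \bigcup_{s \in S} F_s$, which is countable, and take $T := S \cup \{r_i : i \in J\}$; this is a countable subset of $\bE_*$ containing $S$, and $\bE\langle T\rangle = \bE\langle r_i : i \in J\rangle$.

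Next, I would verify that $(r_i : i \in I \setminus J)$ is itself a $\cO_*$-res-construction of $\bE_*$ over $\bE\langle T\rangle$. The $\dcl_T$-generation is immediate from $\bE_* = \bE\langle \overline{r}\rangle$. For $\dcl_T$-independence of the residues, iteratively applying Fact~\ref{fact:res_dim_inequality} along the sub-construction $(r_j)_{j \in J}$ (indexed as an ordinal) and using the finitary character of $\dcl_T$ yields
\[\res(\bE\langle T\rangle) = \res(\bE)\langle \res(r_j) : j \in J\rangle,\]
so the $\dcl_T$-independence of $(\res(r_i))_{i \in I}$ over $\res(\bE)$ descends to $\dcl_T$-independence of $(\res(r_i))_{i \in I \setminus J}$ over $\res(\bE\langle T\rangle)$. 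Hence the extension $\bE\langle T\rangle \prec \bE_*$ is res-constructible.

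To deduce that $\bE\langle T\rangle$ is weakly orthogonal I would apply Corollary~\ref{cor:finite_dim_fact} to the res-constructible extension $\bE\langle T\rangle \prec \bE_*$: for any finite tuple $\overline{y}$ in $\bE_*$, the intermediate extension $\bE\langle T\rangle \prec \bE\langle T\rangle\langle \overline{y}\rangle$ is res-constructible over $\bE\langle T\rangle$. I do not foresee any serious obstacle: the argument is essentially bookkeeping about res-constructions followed by a single invocation of Corollary~\ref{cor:finite_dim_fact}. Notably, the section-wide hypothesis that $\val(\bE_*, \cO_*)$ contains no uncountable well-ordered subset is not needed for this proposition.
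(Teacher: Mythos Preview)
Your proof is correct and follows essentially the same approach as the paper's: pick a countable subset of a fixed res-construction containing $S$ in its $\dcl$-closure, observe that $\bE_*$ remains res-constructible over the resulting intermediate model, and invoke Corollary~\ref{cor:finite_dim_fact}. You have simply filled in the detail (via Fact~\ref{fact:res_dim_inequality} and finite character) of why the leftover part of the res-construction still works over $\bE\langle T\rangle$, and your remark that the value-group hypothesis is unnecessary here is correct.
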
 
\begin{proof}
Fix a res-construction $\overline{s}$ of $\bE_*$ over $\bE$. Let $T$ be a countable subset of $\overline{s}$ such that $S \subset \bE\langle T \rangle$. Then, since $\bE_*$ is res-constructible over $\bE\langle T \rangle$, by Corollary \ref{cor:finite_dim_fact}, $\bE\langle T \rangle$ is weakly orthogonal, as desired.
\end{proof}

The converse to Proposition \ref{prop:necessary_condition} does not hold, as we will see in Example \ref{ex:Pathological} below. Before presenting the counterexample, we now show that a ``uniform'' version of weak-orthogonality is equivalent to res-constructibility.

\begin{theorem}\label{thm:uniform_condition}
Let $\bE \prec \bE_*$ be an extension such that $\val(\bE_*)$ contains no uncountable ordinal. The following are equivalent:
\begin{itemize}
    \item  There exists a $\dcl$-basis $B$ for $\bE_*$ over $\bE$, and a function
\[
\phi : \bE_* \rightarrow \{ S \subset B : S\text{ countable} \}
\]
such that
\begin{enumerate}
\item For any countable $S \subset \bE_*$, $\bE\langle \phi(x) : x \in S \rangle$.
\item For any $x \in \bE_*$, for any $y \in \phi(x)$,
$\phi(y) \subset \phi(x)$, and $x \in \bE \langle \phi(x) \rangle$.
\end{enumerate}

\item $\bE \prec \bE_*$ is res-constructible.
\end{itemize}
\end{theorem}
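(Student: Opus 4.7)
The equivalence will be proved in two directions, with $(2) \Rightarrow (1)$ being direct and $(1) \Rightarrow (2)$ requiring a transfinite construction.

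For $(2) \Rightarrow (1)$, given a res-construction $B$ of $\bE_*$ over $\bE$, define $\phi(x)$ for each $x \in \bE_*$ to be a countable subset of $B$ with $x \in \bE\langle \phi(x)\rangle$, taking $\phi(b) = \{b\}$ for $b \in B$. Then condition (2) is immediate, since for $y \in \phi(x) \subseteq B$ we get $\phi(y) = \{y\} \subseteq \phi(x)$. Condition (1) follows from Corollary~\ref{cor:finite_dim_fact} together with Lemma~\ref{lem:res_implies_no_weak_elements}: for any countable $S \subseteq \bE_*$, the set $\bigcup_{x \in S} \phi(x)$ is a sub-tuple of the res-construction $B$, so its complement $B \setminus \bigcup_{x \in S} \phi(x)$ gives a res-construction of $\bE_*$ over $\bE\langle \phi(x) : x \in S\rangle$; hence every element of $\bE_*$ outside $\bE\langle \phi(x) : x \in S\rangle$ is residually generated, establishing weak orthogonality.

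For $(1) \Rightarrow (2)$, assume $B$ and $\phi$ as in condition $(1)$ are given. Well-order $B$ as $(b_\xi : \xi < \lambda)$, and define inductively a continuous chain of $\phi$-closed subsets $T_\alpha \subseteq B$ by $T_0 = \emptyset$, $T_{\alpha+1} = T_\alpha \cup \phi(b_\alpha)$, and $T_\mu = \bigcup_{\alpha < \mu} T_\alpha$ at limits. By condition (2), each $\phi(b_\alpha)$ is itself $\phi$-closed, whence each $T_\alpha$ is $\phi$-closed and $T_{\alpha+1} \setminus T_\alpha$ is countable, with $T_\lambda = B$. The plan is to show by transfinite induction that each $\bE\langle T_\alpha\rangle$ is res-constructible over $\bE$, with res-constructions nested compatibly as $\alpha$ grows, so that at $\alpha = \lambda$ we obtain the desired res-construction of $\bE_*$ over $\bE$. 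Limit steps are handled by concatenating the previously-built res-constructions, so the crux is the successor step. Since $\bE\langle T_{\alpha+1}\rangle/\bE\langle T_\alpha\rangle$ is countably generated by $\phi(b_\alpha) \setminus T_\alpha$, it suffices to realize this countable extension as a countable tower of finite res-constructible extensions, each of which is established by exploiting the weak orthogonality of $\bE\langle \phi(b_\alpha)\rangle$ in $\bE_*$ furnished by (1), combined with an application of Theorem~\ref{thm:mainthm} under the value group hypothesis to transfer res-constructibility across the relevant compositum.

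The main obstacle is this successor step: the field $\bE\langle T_\alpha\rangle$ may already be of uncountable $\dcl$-dimension, so condition (1) does not directly apply to it, and we need to transfer the local weak orthogonality known at $\bE\langle \phi(b_\alpha)\rangle$ into residual behavior of the new generators $\phi(b_\alpha) \setminus T_\alpha$ over the potentially large $\bE\langle T_\alpha\rangle$. I anticipate adapting the strong res-construction machinery of Definition~\ref{def:strong} and Proposition~\ref{prop:extending_strong} to the $\cO_*$-res-construction inductively built for $\bE\langle T_\alpha\rangle$, passing through a joint elementary residue section of $\bE\langle T_\alpha\rangle$ and $\bE\langle \phi(b_\alpha)\rangle$, and using the hypothesis on $\val(\bE_*)$ precisely to invoke the right-factor theorem of Section~3 at each finite step of the countable tower, yielding res-constructibility of $\bE\langle T_\alpha\rangle \prec \bE\langle T_{\alpha+1}\rangle$.
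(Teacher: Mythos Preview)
Your $(2)\Rightarrow(1)$ is essentially the paper's argument. Your chain $\bE\langle T_\alpha\rangle$ for $(1)\Rightarrow(2)$ is exactly the paper's $\bK_\beta$, and you correctly isolate the successor step as the only issue. The problem is your proposed resolution of that step.

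You plan to invoke Theorem~\ref{thm:mainthm} and the strong-res-construction machinery of Definition~\ref{def:strong} and Proposition~\ref{prop:extending_strong}. But all of that apparatus presupposes an ambient res-constructible extension $(\bE,\cO)\preceq(\bE_*,\cO_*)$ together with a fixed res-construction $\overline{r}$ and its residue section $\bK_*$; the notion of $\bK_*$-strong, and the whole argument of Section~3, live inside that ambient structure. Here no such ambient res-construction is given---that $\bE_*$ is res-constructible over $\bE$ is precisely the conclusion---so Theorem~\ref{thm:mainthm} cannot be invoked, and the phrase ``transfer res-constructibility across the relevant compositum'' has no clear meaning. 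Knowing that $\bE\langle\phi(b_\alpha)\rangle$ is weakly orthogonal tells you about finite extensions of a \emph{countable} base; it does not by itself say anything about the new generators over the possibly uncountable $\bE\langle T_\alpha\rangle$.

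The paper's actual argument for the successor step is both simpler and uses a tool you have not mentioned: Lemma~\ref{lem:countable_residual_cuts}. One shows directly that every $\bE\langle T_\alpha\rangle$ is weakly orthogonal in $\bE_*$. If some $b_n$ failed to be residually generated over $\bE\langle T_\alpha\rangle\langle b_1,\ldots,b_{n-1}\rangle$, then by Lemma~\ref{lem:countable_residual_cuts} (this is the only place the value-group hypothesis is used) its $L_\convex$-type is already determined by a countable $S\subseteq\bE\langle T_\alpha\rangle\langle b_1,\ldots,b_{n-1}\rangle$; consequently $b_n$ is not residually generated over $\bE\langle\phi(S),b_1,\ldots,b_{n-1}\rangle$ either, contradicting the weak orthogonality of $\bE\langle\phi(S)\rangle$ supplied by condition~(1). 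Once every $\bE\langle T_\alpha\rangle$ is weakly orthogonal, the countable-dimensional steps $\bE\langle T_\alpha\rangle\prec\bE\langle T_{\alpha+1}\rangle$ are res-constructible and the chain assembles into a res-construction of $\bE_*$.
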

\begin{proof}
First, let us prove $\Leftarrow$. Fix a res-construction $\overline{s}$ of $\bE_*$ over $\bE$. Then, let $\phi(x) = \text{the minimal } T \subset \overline{s} \text{ s.t. } x \in \bE\langle T \rangle$. Then, again using Corollary \ref{cor:finite_dim_fact}, for any countable $S \subset \bE_*$, $\bE\langle \phi(S)\rangle$ is weakly orthogonal.
Also, trivially for any $x$ and any $y \in \phi(x)$, $\phi(y) \subset \phi(x)$, and $x \in \bE\langle \phi(x)\rangle$.

Now we will prove $\Rightarrow$. Note that we can extend $\phi$ to the power set of $\bE_*$ by letting $\phi(T) = \bigcup_{x\in T} \phi(x)$. Let $b_\beta : \beta < \alpha$ be an ordinal enumeration of $B$. It suffices to prove that the chain
\[
\bK_\beta = \bE \langle \phi(\{ b_\gamma : \gamma < \beta\}) \rangle : \beta < \alpha
\]
refines to a res-construction of $\bE_*$ over $\bE$.
Note first that each $\bK_\beta \leq \bK_{\beta+1}$ is of countable $\dcl$-dimension by the definition of $\phi$.
Next, we claim $\bK_\beta$ is weakly orthogonal, for all $\beta$.
To see this, suppose there exist some $b_1,\cdots b_n \in \bE_*$, such that $b_n$ is immediate over $\bK_\beta \langle b_1,\cdots b_{n-1} \rangle$.
By Lemma \ref{lem:countable_residual_cuts}, there is some countable $S \subset \bK_\beta$ such that $\tp(b_{n}/\bE\langle S,b_1,\cdots b_{n-1}\rangle) \models \tp(b_{n}/\bK_\beta\langle b_1,\cdots b_{n-1}\rangle)$. 
Therefore, $\tp(b_N/\bK')$ is immediate for any model $\bK'$ such that $\bE\langle S,b_1,\cdots b_{n-1}\rangle \prec \bK' \prec \bK_\beta\langle b_1,\cdots b_{n-1}\rangle$.
In particular, $b_{n}$ is immediate over $\bE \langle \phi(S), b_1, \cdots b_{n-1} \rangle$, which contradicts the hypothesis that $\bE \langle \phi(S) \rangle$ is weakly orthogonal.
Thus, $\bK_\beta \prec \bK_{\beta+1}$ is res-constructible for each $\beta$, so $\bE \prec \bE_*$ is res-constructible, as desired.
\end{proof}

Now, we will introduce an example demonstrating that this uniformity is necessary.

\begin{example}\label{ex:Pathological}
   Let $T\coloneqq \RCF$ be the theory of real closed fields, so that $T_\convex = \RCVF$. Let $\bK \models T$ with $\dim_{\dcl_T}(\bK)=\aleph_2$ and let $\bU\coloneqq \bK\Lhp \mu^{\bQ}\Rhp_{\mathrm{Puiseux}}\succ \bK$ be the field of Puiseux series with coefficients in $\bK$ in some infinitesimal $0<\mu<\bK^{>0}$. Set also $\cO\coloneqq \CH_\bU(\bK)$.
   
   Let $\bP\preceq \bK$ be the real closure of $\bQ$, $\bE\coloneqq \bP \langle \mu\rangle$, and $\bE_1$ be a $\cO$-res-constructible extension of $\bE$ with res-construction $B\coloneqq (b_{i, \alpha}: \alpha< \aleph_2, \, i <\omega) \in \bK^{\omega \times \aleph_2}$.
   
   Also, for each $\alpha<\aleph_2$ denote by $B_\alpha$ the $\omega$-tuple $(b_{i, \alpha}: i\in \omega)$.
   
   Consider the function $M: (\cO\cap \bE_1)^\omega \times (\cO\cap \bE_1)^\omega \to \bU$ defined by \[M((c_i)_{i<\omega}, (d_i)_{i<\omega}) \coloneqq  \sum_{i<\omega} c_i d_i \mu^i.\]
   
   Finally set $\bE_2\coloneqq \bE_1\langle M(B_\alpha, B_\beta): \alpha < \beta <\aleph_2\rangle$ and observe that $\bE_2$ is a dense extension of $\bE_1$ because each $\bE_1 \langle M(B_\alpha, B_\beta)\rangle$ is a dense extension and $\bE_2\subseteq \bU$, so it does not contain any $x>\bE_1$.
\end{example}

\begin{proposition}\label{prop:Example_indpendence+}
   In the above example, the family $\big(M(B_\alpha, B_\beta): \alpha<\beta<\aleph_2\big)$ is $\dcl_T$-independent over 
   \[\bE_1'\coloneqq \bigcup_{\substack{F \subseteq B\\|F|<\aleph_0}} \big(\bP \langle F\rangle \big) \Lhp \mu^\bQ \Rhp.\]
   where $ (\bP\langle F\rangle)\Lhp \mu^\bQ\Rhp$ is the Hahn field with coefficients from $\bP\langle F \rangle \subseteq \bR$ and exponents from $\bQ$. In particular $\bE_1' \cap \bE_2=\bE_1$.
\begin{proof}
    Note $\bE_1 \prec \bE'_1$. By finite character of $\dcl_T$ it suffices to show that for each $n$, $\big(M(B_\alpha, B_\beta): \alpha < \beta<n\big)$ is $\dcl_T$-independent over $\bigcup_{m<\omega} \bP \langle b_{i,j}: i<m,\, j<n\rangle \Lhp \mu^\bQ \Rhp$.

    Let $\overline{\var{x}}\coloneqq (\var{x}_{i,j})_{i<\omega, j<n}$, $\overline{\var{y}}\coloneqq (\var{y}_{i,j})_{i<j<n}$, $\overline{\var{z}}\coloneqq (\var{z}_{i})_{i<l}$ be tuples of formal variables and suppose that $p(\overline{\var{x}}, \overline{\var{y}}, \overline{\var{z}}) \in \bE [\overline{\var{x}}, \overline{\var{y}}, \overline{z}]$
    is such that 
    \[p\big( (b_{i,j})_{i<\omega, j<n}, (M(B_i, B_j))_{i<j<n}, (c_{i})_{i<l}\big)=0\in \bK\Lhp \mu \Rhp.\]
    where $(c_i)_{i<l}$ lives in $\bP \langle b_{i,j}: i<m,\, j<m\rangle \Lhp \mu^\bQ \Rhp$ for some $m$, and is $\dcl_T$-independent over $\bP\langle b_{i,j} : i<\omega, j<n\rangle$. Notice that by definition we can write
    \[c_k = g_k((b_{i,j})_{i<m, j<n})= \sum_{q \in \bQ} g_{k,e} ((b_{i,j})_{i<m, j<n}) \mu^q\]
    for some $g_{k,e}\in \bP[(\var{x}_{i,j})_{i<m, j<n}]\subseteq \bP[\overline{\var{x}}]$.
    
    Let $\tilde{p}(\overline{\var{x}}) \in (\bP [\overline{\var{x}}])\Lhp \mu^\bQ\Rhp$ and $a_q(\overline{\var{x}}) \in \bP[\overline{\var{x}}]$ for $q \in \bQ$ be defined by
    \[\tilde{p}( \overline{\var{x}})\coloneqq 
    p\left(\overline{\var{x}},
        \big(\sum_{k<\omega} \var{x}_{k,i}\var{x}_{k,j} \mu^k\big)_{i<j<n},
        \big(g_k(\overline{x})\big)_{k<l}\right) =
    \sum_{q \in \bQ} a_q(\overline{\var{x}}) \mu^q, \quad a_q(\overline{\var{x}}) \in \bP[\overline{\var{x}}]\]
    so that $0=p\big( (b_{i,j})_{i<\omega, j<n}, (M(B_i, B_j))_{i<j<n}, (c_i)_{i<l}\big)=\tilde{p}( (b_{i,j})_{i<\omega, j<n})$, whence, by algebraic independence of the $b_{i,j}$ over $\bE$, $a_q(\overline{\var{x}})=0$ for all $q \in \bQ$.

    Now suppose toward contradiction that $p \neq 0$. Then by algebraic independence of $(b_{i,j})_{i<\omega, j<n}$ and $(c_k)_{k<l}$, there must be some $0<\alpha<\beta <n$, such that if we define $\overline{\var{y}}'\coloneqq (\var{y}_{i,j})_{(i,j)\neq (\alpha, \beta)}$, there exists $E>0$ and polynomials $q\in \bE[ \ox,\oy,\oz],q_l\in \bE[ \ox,\oy',\oz]$ for $0 \leq l < E$, obeying
    \[p(\overline{\var{x}}, \overline{\var{y}}, \overline{\var{z}})=q(\overline{\var{x}}, \overline{\var{y}}', \overline{\var{z}})\var{y}_{\alpha, \beta}^E + \sum_{0\leq l<E}q_l(\overline{\var{x}}, \overline{\var{y}}', \overline{\var{z}})y_{\alpha,\beta}^{l}\]
    Similarly to $p$ we write
    \[\tilde{q}(\overline{\var{x}})\coloneqq 
    q\left(\overline{\var{x}},
        \big(\sum_{k<\omega} \var{x}_{k,i}\var{x}_{k,j} \mu^k\big)_{(i,j)\neq (\alpha, \beta)},
        \big(g_k(\overline{x})\big)_{k<l}\right)= \sum_{q \in \bQ} d_q(\overline{\var{x}}) \mu^{q}, \quad d_q(\overline{\var{x}}) \in \bP[\overline{\var{x}}]\]
    and let $e\coloneqq \min\{q \in \bQ: d_q (\overline{\var{x}}) \neq 0\}$.
    
    Now let $m$ be an upper bound for the index $i$ of the variables $(\var{x}_{i,j})_{i<\omega, j<n}$ appearing in $d_e(\overline{\var{x}})$ and choose $\overline{r}\coloneqq (r_{i,j})_{i<\omega, j<n}$ in $\bP$ so that
    \[d_e(\overline{r})\neq 0 \qquad \text{and} \qquad \text{for $i>m$,}\; r_{i,j}\coloneqq \begin{cases}
            0 &\text{if}\; j \notin \{\alpha, \beta\}\\
            s_{i} &\text{if}\; j \in \{\alpha, \beta\}.
        \end{cases}\]
    where $(s_i)_{m<i<\omega}$ is a sequence in $\bP$ such that $\sum_{i<\omega} s_i^2\mu^i \notin \bE \langle g_i(\overline{r}) : i <l\rangle$. 
    Notice that this is possible because each $g_k(\overline{r})$ only depends on $(r_{i,j})_{i<m, j<n}$.
    
    With such a choice $\tilde{q}(\overline{r})\in \mu^e d_e(\overline{r}) + \mu^{e} \co$, so $\tilde{q}(\overline{r}) \neq 0$, and moreover setting $\overline{r}_{i}\coloneqq (r_{k,i})_{k< \omega}$, we have:
    \begin{itemize}
        \item $M\big(\overline{r}_i, \overline{r}_j) = \sum_{k\le m} r_{k,i}r_{k,j}\mu^k \in \bE$ for all $(i,j) \neq (\alpha, \beta)$;
        \item $M(\overline{r}_{\alpha}, \overline{r}_\beta)\coloneqq \sum_{k\le m} r_{k,\alpha}r_{k,\beta}\mu^k + \sum_{k>m} s_k^2\mu^k \notin \bE \langle g_i(\overline{r}) : i <l\rangle$.
    \end{itemize}
    But this yields a contradiction because
    \[\begin{aligned}
        0=\tilde{p}(\overline{r}) = \tilde{q}(\overline{r})M(\overline{r}_\alpha, \overline{r}_\beta)^E + \\
        + \sum_{h<E} q_h\big(
            \overline{r},
            \big(M(\overline{r}_i, \overline{r}_j), \big)_{(i,j)\neq (\alpha,\beta)},
            (c_i)_{i<l}
        \big) M(\overline{r}_\alpha, \overline{r}_\beta)^h,
    \end{aligned}\]
    so we would have $M(B_\alpha, B_\beta) \in \bE \langle g_{i}(\overline{r}): i<l\rangle$, because $\bE \langle g_{i}(\overline{r}): i<l\rangle$ is by definition algebraically closed in $\bK \Lhp \mu^\bQ\Rhp$. Thus, $p=0$, so we have the desired $\dcl_T$-independence.

    For the last assertion suppose that $x \in \bE_2 \cap \bE_1'$. Since $x \in \bE_2$ it is algebraic over $\bE_1 (F)$ for some finite $F\subseteq \{M(B_\alpha, B_\beta): \alpha<\beta<\aleph_2\}$. If $x \notin \bE_1$, then by the exchange property, some $y \in F$ would be algebraic over $\{x\} \cup F \setminus \{y\}$, which, since $x \in \bE_1'$, would contradict the algebraic independence of $F$ over $\bE_1'$. Therefore we must have $x \in \bE_1$.
    \end{proof}
\end{proposition}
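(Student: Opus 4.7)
The plan is to argue by contradiction, exploiting two structural facts. First, the family $B = (b_{i,\alpha})$ is $\dcl_T$-independent over $\bP = \res_\cO(\bE)$ (it is a res-construction over $\bE$ and $T = \RCF$), hence algebraically independent over $\bE = \bP\langle \mu \rangle$. Second, the products $M(B_\alpha, B_\beta) = \sum_{i<\omega} b_{i,\alpha} b_{i,\beta} \mu^i$ live in the Hahn field $\bK\Lhp \mu^\bQ \Rhp$, so we can manipulate them coefficient-by-coefficient as formal Hahn series in $\mu$. By finite character of $\dcl_T$-independence it suffices to rule out any polynomial relation
\[
p\big((b_{i,j})_{i<\omega,\, j<n}, (M(B_i,B_j))_{i<j<n}, c_0, \ldots, c_{l-1}\big) = 0,
\]
with $p \neq 0$ over $\bE$ and $(c_k)_{k<l}$ in some $\bP\langle b_{i,j}: i<m,\, j<n\rangle\Lhp \mu^\bQ\Rhp$, algebraically independent over $\bP\langle b_{i,j}: i<\omega, j<n \rangle$. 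Writing each $c_k = g_k((b_{i,j})_{i<m,\, j<n})$ as a Hahn series whose coefficients $g_{k,q}$ are polynomials over $\bP$, I substitute and expand $p$ as a single formal Hahn series $\tilde p(\bar x) = \sum_q a_q(\bar x) \mu^q \in \bP[\bar x]\Lhp \mu^\bQ \Rhp$; the vanishing $\tilde p(\bar b) = 0$ then forces every $a_q$ to vanish identically in $\bP[\bar x]$ by algebraic independence of $\bar b$ over $\bP$.

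Next, assuming $p \neq 0$, I would isolate a pair $(\alpha, \beta)$ so that $y_{\alpha, \beta}$ occurs effectively, write $p = q \cdot y_{\alpha,\beta}^E + \sum_{h<E} q_h \cdot y_{\alpha,\beta}^h$ with $q \neq 0$ independent of $y_{\alpha, \beta}$, and expand $\tilde q(\bar x) = \sum_{q} d_q(\bar x) \mu^q$. Let $e$ be the smallest exponent with $d_e \neq 0$ and enlarge $m$ so that $d_e$ involves only variables $\var x_{i,j}$ with $i \le m$. The key step is to construct a substitution $\bar r \in \bP^{\omega \times n}$ that simultaneously makes $d_e(\bar r) \neq 0$, keeps $M(\bar r_i, \bar r_j) \in \bE$ for every $(i,j) \neq (\alpha, \beta)$, and renders $M(\bar r_\alpha, \bar r_\beta)$ transcendental over $\bE\langle g_k(\bar r): k<l\rangle$.

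I would accomplish this by decoupling the constraints along two disjoint index ranges. For $i \le m$, choose $r_{i,j} \in \bP$ so that $d_e(\bar r) \neq 0$, which is possible since $d_e$ is a nonzero polynomial and $\bP$ is infinite; for $i > m$, set $r_{i,j} = 0$ when $j \notin \{\alpha, \beta\}$ and $r_{i,\alpha} = r_{i,\beta} = s_i$ for a sequence $(s_i) \in \bP^\omega$ to be chosen. Under this substitution, $M(\bar r_i, \bar r_j)$ has finite Hahn support whenever $(i,j) \neq (\alpha,\beta)$, hence lies in $\bE$; meanwhile $M(\bar r_\alpha, \bar r_\beta)$ equals a finite sum plus $\sum_{i>m} s_i^2 \mu^i$. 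Crucially, $g_k(\bar r)$ depends only on the $r_{i,j}$ with $i \le m$, so $\bE\langle g_k(\bar r) : k < l\rangle$ is a finite (hence countable) extension of $\bE$; by cardinality, a generic choice of $(s_i)$ in the uncountable set $\bP^\omega$ makes $\sum s_i^2 \mu^i$ transcendental over it. Substituting into $p = 0$ then yields a nontrivial polynomial relation for $M(\bar r_\alpha, \bar r_\beta)$ over that field, with leading coefficient $\tilde q(\bar r) \neq 0$ (since its lowest Hahn coefficient is $d_e(\bar r) \neq 0$), contradicting transcendence.

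Finally, the \emph{in particular} assertion $\bE_1' \cap \bE_2 = \bE_1$ is a one-line exchange-property argument: any $x \in \bE_1' \cap \bE_2 \setminus \bE_1$ lies in $\bE_1(F)$ for some finite $F \subseteq \{M(B_\alpha, B_\beta) : \alpha < \beta < \aleph_2\}$, and then exchange would yield some $M \in F$ algebraic over $\bE_1'(F \setminus \{M\}) \subseteq \bE_1'$, contradicting the independence just established. The main technical obstacle is the substitution step: one must ensure that the polynomial condition $d_e(\bar r) \neq 0$ and the transcendence condition on the Hahn tail $\sum s_i^2 \mu^i$ can be satisfied simultaneously, and isolating them in disjoint row ranges is the key observation that makes the argument go through cleanly.
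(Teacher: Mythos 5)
Your proposal reproduces the paper's argument essentially step for step: the reduction to a polynomial relation $p$ via finite character, the Hahn-series expansion $\tilde p = \sum_q a_q \mu^q$ forcing all $a_q=0$ by algebraic independence of the $b_{i,j}$, the isolation of a pair $(\alpha,\beta)$ with $y_{\alpha,\beta}$ occurring, the two-regime choice of $\bar r$ (low rows to make $d_e(\bar r)\neq 0$, high rows zero except in columns $\alpha,\beta$ to make the tail $\sum s_i^2\mu^i$ transcendental over the countable field $\bE\langle g_k(\bar r):k<l\rangle$), and finally the exchange-property argument for $\bE_1'\cap\bE_2=\bE_1$. This is the same approach as the paper's proof, with the cardinality justification for picking $(s_i)$ fleshed out slightly more explicitly.
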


\begin{corollary}
    In Example~\ref{ex:Pathological}, $\bE_2$ is not $\cO$-res-constructible over $\bE$.
    \begin{proof}
        Suppose toward contradiction that $\bE_2$ is $\cO$-res-constructible over $\bE$. Then by Theorem~\ref{thm:uniform_condition} we can find a $\dcl_T$-basis $C$ of $\bE_2$ over $\bE$, and a function
        \[\phi : \bE_2 \rightarrow C\]
        satisfying (1) and (2) of Theorem~\ref{thm:uniform_condition}.
        
        Let $K\coloneqq \bE \langle \phi(\bigcup_{\alpha <\aleph_1} B_\alpha)\rangle$ so that $\dim_{\dcl_T}(K/\bE)=\aleph_1$, whence it must be contained in some $\bE \langle B_\alpha, M(B_\alpha, B_\beta) : \alpha <\beta<\gamma\rangle$ for some limit ordinal $\gamma$ with $\aleph_1 \leq \gamma<\aleph_2$.
        
        Let $K'\coloneqq  \bE \langle \phi( B_\gamma \cup \bigcup_{\alpha< \aleph_1} B_\alpha) \rangle$.
        Notice that by construction $K'$ is generated over $K$ by $\phi(B_\gamma)$ which is countable.
        
        On the other hand for all $\alpha < \aleph_1$ we have $M(B_\alpha, B_\gamma) \in K'$, because by property (1) of $\phi$, there are no elements of $\bE_2 \setminus K'$ that are weakly immediate over $K'$ and $M(B_\alpha, B_\gamma)$ is weakly immediate over $K'$.
        This yields a contradiction, because by Proposition~\ref{prop:Example_indpendence+}, $\{M(B_\alpha, B_\gamma): \alpha <\gamma\} \subseteq K'$ is $\dcl_T$-independent over $K$, and uncountable.
    \end{proof}
\end{corollary}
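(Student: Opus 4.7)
The plan is to argue by contradiction via the uniformity characterisation of Theorem~\ref{thm:uniform_condition}. Suppose that $\bE_2$ were $\cO$-res-constructible over $\bE$; then there would exist a $\dcl_T$-basis $C$ of $\bE_2$ over $\bE$ and a function $\phi\colon \bE_2 \to \{S\subseteq C : S \text{ countable}\}$ (extended to subsets by taking unions) satisfying properties (1) and (2) of that theorem. The strategy is to locate a limit ordinal $\gamma<\aleph_2$ whose block $B_\gamma$ is ``skipped'' by the part of $\phi$ used to encode an $\aleph_1$-sized initial portion of $\bE_2$, and then to force a contradiction by showing that enlarging this portion by $\phi(B_\gamma)$ must unintentionally absorb uncountably many of the elements $M(B_\alpha, B_\gamma)$.

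First, set $K := \bE\langle \phi(\bigcup_{\alpha<\aleph_1}B_\alpha)\rangle$. Since each $\phi(x)$ is countable and $\bigcup_{\alpha<\aleph_1}B_\alpha$ has cardinality $\aleph_1$, $\dim_{\dcl_T}(K/\bE)\le \aleph_1$, while property~(2) guarantees $b_{i,\alpha}\in K$ for all $\alpha<\aleph_1$. Every element of $K$ is $\dcl_T$-definable over $\bE$ together with finitely many of the generators $b_{i,\alpha}$, $M(B_{\alpha'}, B_{\beta'})$ of $\bE_2$; the set of all indices occurring across an $\aleph_1$-sized basis of $K$ has cardinality at most $\aleph_1$, so by the uncountable cofinality of $\aleph_2$ one may choose a limit ordinal $\aleph_1\le \gamma <\aleph_2$ such that $K \subseteq \bE\langle B_\alpha, M(B_\alpha, B_\beta): \alpha<\beta<\gamma\rangle$; in particular, $B_\gamma$ is disjoint from the chosen generators of $K$.

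Next, enlarge $K$ to $K' := \bE \langle \phi(B_\gamma \cup \bigcup_{\alpha<\aleph_1} B_\alpha)\rangle$. By property~(1), $K'$ is weakly orthogonal in $\bE_2$; by property~(2), $K'\supseteq B_\gamma \cup \bigcup_{\alpha<\aleph_1}B_\alpha$; and $\dim_{\dcl_T}(K'/K)\le \aleph_0$, since $K'$ is generated over $K$ by the countable set $\phi(B_\gamma)$. For each $\alpha<\aleph_1$, the partial sums $\sum_{i<n} b_{i,\alpha}b_{i,\gamma}\mu^i$ lie in $K'$ and satisfy $M(B_\alpha, B_\gamma) - \sum_{i<n}b_{i,\alpha}b_{i,\gamma}\mu^i \in \mu^n \cO$, so $M(B_\alpha, B_\gamma)$ is either in $K'$ or $\cO$-weakly immediate over $K'$. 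The second option is excluded by weak orthogonality: $K'\langle M(B_\alpha, B_\gamma)\rangle$ would be a one-dimensional extension generated by a wim element, which by Fact~\ref{fact:cuts_tricho} and Lemma~\ref{lem:res_implies_no_weak_elements} cannot be res-constructible. Hence $M(B_\alpha, B_\gamma)\in K'$ for every $\alpha<\aleph_1$.

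The contradiction now follows from Proposition~\ref{prop:Example_indpendence+}: the family $\{M(B_{\alpha'}, B_{\beta'}):\alpha'<\beta'<\aleph_2\}$ is $\dcl_T$-independent over $\bE_1'$, and since $K\subseteq \bE_1'\langle M(B_{\alpha'}, B_{\beta'}): \alpha'<\beta'<\gamma\rangle$, an exchange argument shows that the disjoint subfamily $\{M(B_\alpha, B_\gamma):\alpha<\aleph_1\}$ remains $\dcl_T$-independent over $K$. But this uncountable family lies in $K'$, which has only countable $\dcl_T$-dimension over $K$: contradiction. The main obstacle to executing this plan is the combined cardinality-and-cofinality bookkeeping needed to isolate a ``skipped'' block $B_\gamma$, coupled with the insight that the limits $M(B_\alpha,B_\gamma)$ are forced inside $K'$ by its weak orthogonality; transferring the algebraic independence from $\bE_1'$ down to $K$ is the final delicate point, handled precisely by the disjointness of $B_\gamma$ from the generators of $K$.
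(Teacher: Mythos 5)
Your proposal follows essentially the same argument as the paper's own proof: same choice of $K$ via $\phi$, same selection of the "skipped" limit block $B_\gamma$, same $K'$, the same observation that the elements $M(B_\alpha,B_\gamma)$ are forced into $K'$ by weak orthogonality, and the same cardinality contradiction via the $\dcl_T$-independence established in Proposition~\ref{prop:Example_indpendence+}. You add helpful detail the paper elides (the cofinality bookkeeping for the choice of $\gamma$, the partial-sums argument showing $M(B_\alpha,B_\gamma)$ is weakly immediate over $K'$, and the exchange argument transferring independence from $\bE_1'$ down to $K$), and your indexing $\{M(B_\alpha,B_\gamma):\alpha<\aleph_1\}$ is actually slightly more careful than the paper's $\{M(B_\alpha,B_\gamma):\alpha<\gamma\}$, since containment in $K'$ is only justified for $\alpha<\aleph_1$; but the substance and structure of the argument are identical.
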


\begin{lemma}\label{lem:Example_auto}
    Fix some $b_{l, \delta}$ in Example~\ref{ex:Pathological}. For all $z \in \co \cap \bE_2$ there is a unique automorphism $\phi_z$ of $\bE_2$ over $\bE$ such that
    \begin{enumerate}
        \item $\phi_z(b_{l, \delta}) = b_{l, \delta}+ z$
        \item for all $(i,\alpha)\neq (l, \delta)$, $\phi_z(b_{i,\alpha})=b_{i,\alpha}$.
    \end{enumerate}
    For such $\phi_z$, for all $\alpha,\beta<\aleph_2$, $\phi_z\big(M(B_\alpha, B_\beta)\big)=M(\phi_z(B_\alpha), \phi_z(B_\beta))$. Moreover for all $x \in \bE_2$, the function $\co \cap \bE_2 \ni z \mapsto \phi_z(x)$ extends to a $\bE_2$-definable function on an interval containing $\co$.
    \begin{proof}
        Notice if such $\phi_z$ exists, since $\bE_1$ is dense in $\bE_2$, it is uniquely determined by conditions (1) and (2).
        Set $c_{\alpha,\beta}\coloneqq M(B_\alpha, B_\beta)$ if $\delta \notin \{\alpha,\beta\}$, and $c_{\alpha,\beta}\coloneqq M(B_\alpha, B_\beta)-b_{l, \alpha}b_{l,\beta}+\phi_z(b_{l,\alpha})\phi_z(b_{l,\beta})$ otherwise.
        Notice that in either case as a series in $\mu$, $c_{\alpha,\beta}$ has coefficients in $\bP\langle b_{i,\alpha}: (i,\alpha) \neq (l, \delta)\rangle$, so $\tp\big(c_{\alpha,\beta}/\bE \langle b_{k, \gamma}: (k,\gamma) \neq (l, \delta)\rangle\big)\vdash \tp(c_{\alpha,\beta}/\bE_1)$ has a unique realization in $\bE_2$. Since by definition $\phi_z$ fixes $\bE \langle b_{i,\alpha}: (i,\alpha) \neq (l,\delta)\rangle$, it must thus fix the whole
        \[\bE_2^{-}\coloneqq \bE\langle b_{k,\gamma}, c_{\alpha, \beta}: (k, \gamma) \neq (l, \delta), \, \alpha<\beta<\aleph_2\rangle.\]
        
        To conclude observe that $\bE_2^- \langle b_{l, \delta}\rangle = \bE_2$ and that by construction $\res_\cO(b_{l, \delta}) \notin \res_\cO(\bE_2)$, so $\tp(b_{l, \delta}/\bE_2^-)=\tp(b_{l, \delta}+z/\bE_2^-)$ and we can conclude that $\phi_z$ exists and has the additional property of the statement.

        As for the final clause, fix $x \in \bE_2$ and observe that if $x=g(b_{l, \delta})$ for a $\bE_2^-$-definable function $g$, then $\phi_z(x)=g(b_{l, \delta}+z)$ for all $z \in \bE_2\cap \co$.
    \end{proof}
\end{lemma}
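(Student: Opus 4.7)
The plan is to first establish uniqueness via density of $\bE_1$ in $\bE_2$, then to construct $\phi_z$ explicitly by specifying its values on a $\dcl_T$-basis of $\bE_2$ over $\bE$, and finally read off the two auxiliary clauses. For uniqueness: since $\bE_1$ is dense in $\bE_2$ (as noted in Example~\ref{ex:Pathological}) and any $L_\convex$-automorphism preserves the order, $\phi_z$ is determined by its restriction to $\bE_1$; conditions (1) and (2) in turn specify that restriction on the $\dcl_T$-basis $B$ of $\bE_1$ over $\bE$, so uniqueness is automatic.

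For existence I proceed in two steps. First, since $\res_\cO(b_{l,\delta}+z)=\res_\cO(b_{l,\delta})$ and $\res_\cO(b_{l,\delta})$ is $\dcl_T$-independent in $\res_\cO(\bE_1)$ over $\res_\cO\bigl(\bE\langle B\setminus\{b_{l,\delta}\}\rangle\bigr)$, the elements $b_{l,\delta}$ and $b_{l,\delta}+z$ share an $L_\convex$-type over $\bE\langle B\setminus\{b_{l,\delta}\}\rangle$, whence there is an $L_\convex$-automorphism $\sigma$ of $\bE_1$ fixing $B\setminus\{b_{l,\delta}\}$ pointwise and sending $b_{l,\delta}\mapsto b_{l,\delta}+z$. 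Second, I extend $\sigma$ to $\bE_2$ by the forced assignment $\phi_z(M(B_\alpha,B_\beta)):=M(\sigma(B_\alpha),\sigma(B_\beta))$. Since $\sigma$ only perturbs the $l$-th entry of $B_\delta$, each image either equals $M(B_\alpha,B_\beta)$ (when $\delta\notin\{\alpha,\beta\}$) or differs from it by an element of the form $z\cdot b_{l,\gamma}\mu^l\in\co\cap\bE_2$, so it lies in $\bE_2$. Moreover, $\tp_{L_\convex}(M(B_\alpha,B_\beta)/\bE_1)$ is the weakly immediate type pinned down by the partial sums $\sum_{i\le n}b_{i,\alpha}b_{i,\beta}\mu^i$, and these partial sums are transported by $\sigma$ to the partial sums of $M(\sigma(B_\alpha),\sigma(B_\beta))$; so each image realizes precisely the $\sigma$-pushforward of the source type. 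Combined with the $\dcl_T$-independence of the $M(B_\alpha,B_\beta)$'s over $\bE_1$ provided by Proposition~\ref{prop:Example_indpendence+}, the assignment extends $\sigma$ consistently to the sought $L_\convex$-automorphism $\phi_z$ of $\bE_2$.

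The compatibility $\phi_z(M(B_\alpha,B_\beta))=M(\phi_z(B_\alpha),\phi_z(B_\beta))$ is then immediate from the defining assignment, using that $\phi_z$ restricts to $\sigma$ on $\bE_1$. For the definability clause, set $c_{\alpha,\beta}:=M(\sigma(B_\alpha),\sigma(B_\beta))\in\bE_2$ and let $\bE_2^-$ be the subfield of $\bE_2$ generated over $\bE$ by $B\setminus\{b_{l,\delta}\}$ together with all $c_{\alpha,\beta}$. Then by construction $\phi_z$ fixes $\bE_2^-$ pointwise and $\bE_2=\bE_2^-\langle b_{l,\delta}\rangle$, so any $x\in\bE_2$ can be written as $x=g(b_{l,\delta})$ for some $\bE_2^-$-definable function $g$, yielding $\phi_z(x)=g(b_{l,\delta}+z)$, a map which visibly extends to an $\bE_2$-definable function of $z$ on the (definable) interval of validity of $g(b_{l,\delta}+\cdot)$, which contains $\co$. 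The main subtlety, and the part that will require the most care, is the type-matching of the images at the $M$-generators; this is where both the weakly-immediate description of the cuts via partial sums and the $\dcl_T$-independence supplied by Proposition~\ref{prop:Example_indpendence+} are essential.
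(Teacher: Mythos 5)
Your proposal has a fundamental issue at its starting point: you construct an automorphism $\sigma$ of $\bE_1$ sending $b_{l,\delta}\mapsto b_{l,\delta}+z$, but $z$ is an arbitrary element of $\co\cap\bE_2$ and need not lie in $\bE_1$. When $z\in\bE_2\setminus\bE_1$, the element $b_{l,\delta}+z$ is not in $\bE_1$ at all, so $\sigma$ as you define it is not a self-map of $\bE_1$. The paper avoids this trap entirely: rather than building $\phi_z$ in two stages (on $\bE_1$, then extending), it identifies a subfield $\bE_2^-\subseteq\bE_2$ on which any candidate $\phi_z$ is \emph{forced} to act identically, and exhibits $\bE_2$ as the one-dimensional extension $\bE_2^-\langle b_{l,\delta}\rangle$, at which point the automorphism is produced by the elementary observation that $b_{l,\delta}$ and $b_{l,\delta}+z$ realize the same residual $L_\convex$-type over $\bE_2^-$.

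There are two further problems even if one pretends $z\in\bE_1$. First, your claim that matching the individual $1$-types of the $M(B_\alpha,B_\beta)$ under $\sigma$, combined with $\dcl_T$-independence, suffices to extend $\sigma$ to an $L_\convex$-automorphism of $\bE_2$ is unjustified: in an o-minimal theory, the joint type of a tuple over a model is not determined by the individual $1$-types together with algebraic independence (e.g.\ the type of $(\pi,e)$ over $\bQ^{\mathrm{rc}}$ is not recoverable from the cuts of $\pi$ and $e$ plus independence; one needs the cut of $\pi+e$, etc.). Second, your definition $c_{\alpha,\beta}:=M(\sigma B_\alpha,\sigma B_\beta)$ does \emph{not} have the property that $\phi_z$ fixes it: applying $\phi_z$ (which agrees with $\sigma$ on the $b$'s) gives $M(\sigma^2 B_\alpha,\sigma^2 B_\beta)$, which differs from $c_{\alpha,\beta}$ whenever $z\ne 0$. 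So the assertion ``by construction $\phi_z$ fixes $\bE_2^-$ pointwise'' is false with your $c_{\alpha,\beta}$. The paper's $c_{\alpha,\beta}$ is (up to a typographical slip in the exponent of $\mu$) chosen so that its Puiseux coefficients lie in $\bP\langle b_{i,\gamma}:(i,\gamma)\neq(l,\delta)\rangle$; this guarantees $\tp_{L_\convex}(c_{\alpha,\beta}/\bE\langle b_{k,\gamma}:(k,\gamma)\neq(l,\delta)\rangle)$ is weakly immediate and hence uniquely realized in $\bE_2$, which is what forces any $\phi_z$ to fix it. That uniqueness-of-realization argument is the real engine of the proof, and it is absent from your proposal.
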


In the setting of Example~\ref{ex:Pathological}, for each $z\in \bE_2$, let us denote by $T_z$ the smallest subset of $\{b_{i,\alpha}: i<\omega, \, \alpha \in \aleph_2\}$ such that $\bP\langle T_z \rangle$ contains all the coefficients of $z$ as a series in $\mu$.
 
\begin{lemma}\label{lem:Example_simplifying_auto}
    In the setting of Example ~\ref{ex:Pathological}, let $z \in \bE_2$ be such that $T_z$ is infinite and suppose $S\subseteq \{b_{i,\alpha}: i<\omega, \, \alpha \in \aleph_2\}$ is such that $T_z \setminus S \neq \emptyset$. Then there is an automorphism $\phi$ of $\bE_2$ fixing $S$ such that $T_{\phi(z)}$ is finite.
    \begin{proof}
        Let $z\coloneqq  \sum_{n}\mu^{a+nb}c_n$, and observe that $T_z=\bigcup T_{c_n}$, so for $n$ large enough $T_{c_n}\setminus S\neq \emptyset$. Fix the minimum such $n$ and let $(l, \delta)$ be such that $b_{l,\delta} \in T_{c_n}\setminus S$, notice in particular that this also entails that $b_{l, \delta} \notin T_{c_k}\subseteq S$ for all $k<n$.
        
        Write $c_{n}\coloneqq f_n(b_{l, \delta})$ for some monotone bijection $f_n$ definable over $\bP\langle b_{i, \alpha}: (i, \alpha) \neq (l, \delta)\rangle$. Since the cut of $b_{l, \delta}$ over $\bP\langle b_{i, \alpha}: (i, \alpha) \neq (l, \delta)\rangle$ is not a principal cut, $\tau(\var{x})\coloneqq  f_n^{-1}(\var{x}+c_n)-b_{l, \delta}$ is a $\bE_2$-definable function such that $\tau(\co)\subseteq \co$.
        
        By the previous Lemma, for every $x\in \bE_2 \cap \co$, there is $\phi_x \in \Aut(\bE_2/S)$ such that $\phi_x(b_{l, \delta})=b_{l, \delta}+\tau(x)$. Since $\phi_x$ fixes $S$ and by the minimality of $n$ it fixes also all $c_k$ for $k<n$, we have
        \begin{align*}
            &\phi_x(z) \in \left(\sum_{k<n}c_k\mu^{a+bk}\right) + \mu^{a+nb}(f_n(\phi(b_{l,\delta}))) +\mu^{a+(n+1)b}\phi(c_{n+1}) + \mu^{a+(n+1)b} \co\\
            &\phi_x(z) \in \left(\sum_{k<n}c_k\mu^{a+bk}\right) + \mu^{a+nb}(c_n+x) \ \ \ \ \ \ +\mu^{a+(n+1)b}c_{n+1} \ \ \ +\mu^{a+(n+1)b} \co
        \end{align*}
        Since $x\mapsto \phi_x(z)$ is definable continuous on $\co$ and $\phi_0(z)=0$ and 
        \[\phi_{-\mu^b(c_{n+1}+1)}(z)\in \left(\sum_{k<n}c_k\mu^{a+bk}\right) +\mu^{a+nb}(c_n) + \mu^{a+(n+1)b} (-1+\co)\]
        there is $x$ such that $\phi_x(z)\coloneqq \sum_{k\le n} c_n \mu^{a+nb}$. Let $\phi\coloneqq \phi_x$ for such $x$.
        We see that it fixes $S$ and $T_{\phi(z)}$ is finite.
    \end{proof}
\end{lemma}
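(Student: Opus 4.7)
My plan is to shift $b_{l,\delta}$ outside $S$ using the automorphisms $\phi_y$ from Lemma~\ref{lem:Example_auto}, tuning the shift by an intermediate value theorem argument (valid because $\bE_2 \models \RCF$) so that $\phi_y(z)$ becomes a polynomial in $\mu$ of bounded degree: its Puiseux coefficients then involve only the (finitely many) $b_{i,\alpha}$ appearing in the $c_k$ up to the truncation point.

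First I would Puiseux-expand $z = \sum_{k \geq 0} c_k \mu^{a+bk}$ with $c_k \in \bK$, observing $T_z = \bigcup_k T_{c_k}$ with each $T_{c_k}$ finite. Let $n$ be minimal with $T_{c_n} \setminus S \neq \emptyset$, and pick $b_{l,\delta}$ in this difference; for $k < n$ we have $b_{l,\delta} \notin T_{c_k}$, so any automorphism moving only $b_{l,\delta}$ fixes these $c_k$. By o-minimality, write $c_n = f_n(b_{l,\delta})$ for an $\bE\langle b_{i,\alpha}: (i,\alpha) \neq (l,\delta)\rangle$-definable $f_n$ that is a monotone bijection near $b_{l,\delta}$. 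Setting $\tau(x) := f_n^{-1}(c_n+x) - b_{l,\delta}$, Lemma~\ref{lem:Example_auto} yields automorphisms $\phi_{\tau(x)}$ fixing $S$ (for $x$ in a small neighborhood of $0$ in $\co$) satisfying $\phi_{\tau(x)}(c_n) = c_n + x$, and $\psi(x) := \phi_{\tau(x)}(z)$ extends to a $\bE_2$-definable continuous function near $0$.

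The core step is the IVT application. Set $T_n := \sum_{k \leq n} c_k \mu^{a+bk}$, whose $T_{T_n}$ is finite, so it suffices to realize $\psi(x_0) = T_n$ for some $x_0$. If $c_k = 0$ for all $k > n$ then $z = T_n$ already and $\phi = \mathrm{id}$ works. Otherwise let $k_0 > n$ be minimal with $c_{k_0} \neq 0$: then $\psi(0) - T_n$ has leading Puiseux term $c_{k_0}\mu^{a+bk_0}$, of sign $\mathrm{sign}(c_{k_0})$. Taking $\bar{x} := -\mu^{b(k_0-n)}(c_{k_0}+1)$ when $c_{k_0} > 0$ (with signs flipped otherwise), the $\mu^{a+bk_0}$-coefficient of $\psi(\bar{x})$ receives a contribution $-(c_{k_0}+1)$ from the explicit shift at order $n$ together with $c_{k_0}$ plus an infinitesimal from the $k=k_0$ term (since $\tau(\bar{x}) \in \co$ forces $\phi_{\tau(\bar{x})}(c_{k_0}) - c_{k_0} \in \co$), giving $-1 + \text{(infinitesimal)}$. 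Hence $\psi(\bar{x}) < T_n < \psi(0)$ (or the reverse), and IVT in $\bE_2$ produces $x_0$ with $\psi(x_0) = T_n$; setting $\phi := \phi_{\tau(x_0)}$ completes the proof.

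I expect the main obstacle to be the bookkeeping that isolates the leading Puiseux term of $\psi(\bar{x}) - T_n$: one must verify that the aggregate perturbations of the $\phi_{\tau(\bar{x})}(c_k)$ for $k \geq n+1$ do not swamp the explicit order-$k_0$ contribution engineered by the shift at order $n$. The key ingredient is that these perturbations live in $\co$ (they come from continuous definable functions evaluated at infinitesimal arguments), so they contribute only infinitesimal amounts to each Puiseux coefficient and cannot cancel the explicit $-1$ produced.
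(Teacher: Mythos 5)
Your argument is essentially the paper's proof: the same change of variables $\tau(x)=f_n^{-1}(x+c_n)-b_{l,\delta}$, the same family of automorphisms $\phi_{\tau(x)}$ supplied by Lemma~\ref{lem:Example_auto}, and the same intermediate-value-theorem step applied to the definable continuous map $x\mapsto\phi_{\tau(x)}(z)$ with target the truncation $T_n=\sum_{k\le n}c_k\mu^{a+bk}$. You are in fact slightly more careful than the published version in tracking the sign of the leading tail coefficient and the degenerate case $c_{n+1}=0$ (by working with the first nonzero $c_{k_0}$, $k_0>n$), whereas the paper's displayed IVT step implicitly assumes $c_{n+1}>0$ and contains the typo ``$\phi_0(z)=0$'' where ``$\phi_0(z)=z$'' is meant.
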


\begin{lemma}\label{lem:finite_ok}
    In the setting of Example~\ref{ex:Pathological} the following hold:
    \begin{enumerate}
        \item if $x \in \bE_2 \setminus \bE_1$, then $T_x$ is infinite;
        \item if $F\subseteq \{b_{\alpha, i}: \alpha <\aleph_2,\, i<\omega\}$ is finite, then $\bE_2 \cap \big(\bP \langle F \rangle\big) \Lhp \mu^\bQ \Rhp= \bE \langle F\rangle$.
    \end{enumerate}
    \begin{proof}
        Notice that if $T_x$ is finite, then $x$ is an element of $\bE_1'$ as defined in Proposition~\ref{prop:Example_indpendence+}.
        On the other hand by the same Proposition $\bE_1' \cap \bE_2 = \bE_1$ and (1) follows.

        As for (2) notice that $\big(\bP \langle F \rangle\big) \Lhp \mu^\bQ \Rhp \subseteq \bE_1'$, so 
        \[\bE_2 \cap \big(\bP \langle F \rangle\big) \Lhp \mu^\bQ \Rhp \subseteq \bE_1 = \bE \langle b_{\alpha, i}: \alpha < \aleph_2,\, i<\omega\rangle\]
        Note also that $(\bP\langle F \rangle)\Lhp \mu^\bQ \Rhp$ is a dense extension of $\bE\langle F \rangle$, whereas $\bE_1$ is a res-constructible extension, so by Lemma ~\ref{lem:res_implies_no_weak_elements} their intersection is $\bE\langle F \rangle$. This concludes the proof.
    \end{proof}
\end{lemma}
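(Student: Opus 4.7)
The plan is to extract both parts from Proposition~\ref{prop:Example_indpendence+}, whose final clause gives the key identity $\bE_1' \cap \bE_2 = \bE_1$, combined with the standard dichotomy between residually generated and weakly immediate cuts (Fact~\ref{fact:cuts_tricho} and Lemma~\ref{lem:res_implies_no_weak_elements}).

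For part (1) I would argue by contrapositive: assume $T_x$ is finite. By definition of $T_x$, every Puiseux coefficient of $x$ as a series in $\mu$ lies in $\bP\langle T_x\rangle$, so $x \in \bP\langle T_x\rangle \Lhp \mu^\bQ \Rhp$. Since $T_x$ is a finite subset of $B$, this Hahn field appears as one of the terms in the directed union defining $\bE_1'$ in Proposition~\ref{prop:Example_indpendence+}, so $x \in \bE_1'$. The last clause of that proposition then gives $x \in \bE_1' \cap \bE_2 = \bE_1$.

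For part (2), the inclusion $\bE\langle F\rangle \subseteq \bE_2 \cap (\bP\langle F\rangle)\Lhp \mu^\bQ \Rhp$ is trivial. For the reverse, let $x$ lie in the right-hand side. Since $F$ is finite, $(\bP\langle F\rangle)\Lhp \mu^\bQ\Rhp \subseteq \bE_1'$, hence Proposition~\ref{prop:Example_indpendence+} yields $x \in \bE_1' \cap \bE_2 = \bE_1$. It then suffices to show that $\bE_1 \cap (\bP\langle F\rangle)\Lhp \mu^\bQ\Rhp \subseteq \bE\langle F\rangle$. By Corollary~\ref{cor:finite_dim_fact}, the extension $\bE\langle F\rangle \preceq \bE_1$ is itself res-constructible, so by Lemma~\ref{lem:res_implies_no_weak_elements} any element of $\bE_1 \setminus \bE\langle F\rangle$ must be $\cO$-residually generated over $\bE\langle F\rangle$. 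On the other hand, $\bE\langle F\rangle = \bP\langle F, \mu\rangle$ is dense in $(\bP\langle F\rangle)\Lhp \mu^\bQ\Rhp$ (since any element of the latter is approximated by its truncations, which are algebraic Puiseux polynomials in $\mu$ over $\bP\langle F\rangle$), so any element of the latter outside $\bE\langle F\rangle$ is $\cO$-weakly immediate over $\bE\langle F\rangle$. Fact~\ref{fact:cuts_tricho} makes these two options mutually exclusive, forcing $x \in \bE\langle F\rangle$.

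The real work has already been done in Proposition~\ref{prop:Example_indpendence+}; once that algebraic independence is in hand, the present lemma reduces to quoting the trichotomy and the finite-factor residue-constructibility from Corollary~\ref{cor:finite_dim_fact}, so I expect no significant obstacle. The only point that requires a moment's care is verifying the density of $\bE\langle F\rangle$ in the Hahn field, which follows from the well-known fact that algebraic Puiseux series are dense in the formal Puiseux series over the same coefficient field.
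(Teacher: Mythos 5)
Your proposal is correct and follows essentially the same route as the paper: both parts reduce to the identity $\bE_1' \cap \bE_2 = \bE_1$ from Proposition~\ref{prop:Example_indpendence+}, and part (2) is then finished by playing the density of $\bE\langle F\rangle$ in $(\bP\langle F\rangle)\Lhp\mu^\bQ\Rhp$ against the absence of weakly immediate elements in a res-constructible extension (Lemma~\ref{lem:res_implies_no_weak_elements}). You are merely a bit more explicit than the paper in invoking Corollary~\ref{cor:finite_dim_fact} for the res-constructibility of $\bE_1$ over $\bE\langle F\rangle$ and in citing Fact~\ref{fact:cuts_tricho} for the exclusivity, but the argument is the same.
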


\begin{proposition}
    In the setting of Example~\ref{ex:Pathological}, any countable set $S \subseteq \bE_2$ extends to some countable $S'\supseteq S$, such that $\bE \langle S'\rangle$ is weakly orthogonal in $\bE_2$.
    \begin{proof}
        Notice that up to reindexing the blocks, we can assume that $S\subseteq K\coloneqq \bE \langle B_i, M(B_j, B_k): i<\omega, \, j<k<\omega\rangle$. So it suffices to show that $K$ is weakly orthogonal in $\bE_2$.

        Let $F\subseteq\bE_2$ be finite and without loss of generality $\dcl_T$-independent over $\bE$.
        
        We claim that there is $\chi \in \Aut(\bE_2/K)$, such that for all $z \in F$, $T_{\chi(z)}$ is finite. Let $F_1\coloneqq \{z \in F: |T_{z}|<\aleph_0\}$ and $F_0\coloneqq F\setminus F_1$. To prove the claim by induction it suffices to show that for every $z \in F_0$, there is $\phi \in \Aut(\bE_2/K\langle T_{z}: z \in F_1\rangle)$ such that $|T_{\phi(z)}|<\aleph_0$. For this it suffices to apply Lemma~\ref{lem:Example_simplifying_auto}, with $S=\{B_i, M(B_j, B_k): i<\omega, \, j<k<\omega\} \cup \{T_z: z \in F_1\}$.

        Notice that by Lemma~\ref{lem:finite_ok} when $T_z$ is finite, $\bE\langle z\rangle \subseteq \bE \langle T_z\rangle$, whence $K\langle z \rangle \subseteq K \langle T_z\rangle$.
        
        Thus since $\bigcup_{z \in F} T_{\chi(x)}$ is finite we have $K \langle \chi(F) \rangle \subseteq K\langle \bigcup_{z \in F} T_{\chi(z)}\rangle$.
        
        Since the latter is a $\cO$-res-constructible extension of $K$, by Corollary~\ref{cor:finite_dim_fact} we can conclude that also $K\langle \chi(F) \rangle$ and thus $K\langle F \rangle$ are $\cO$-res-constructible over $K$.
    \end{proof}
\end{proposition}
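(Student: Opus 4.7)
The plan is to reduce the problem to a canonical case by a re-indexing argument. A countable $S \subseteq \bE_2$ is contained in $\bE\langle B_i, M(B_j, B_k) : i, j, k \in I\rangle$ for some countable $I \subseteq \aleph_2$, and by the symmetry of the construction in the choice of index set (any injection $\omega \hookrightarrow \aleph_2$ acts on the generators and extends to an embedding of $\bE_2$-substructures), we may re-index so that $I = \omega$. Thus it suffices to prove that $K := \bE\langle B_i, M(B_j, B_k) : i<\omega,\, j<k<\omega\rangle$ is weakly orthogonal in $\bE_2$; the desired $S'$ is then any countable generating set of $K$ over $\bE$.

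So the task becomes: for a finite $F \subseteq \bE_2$, show that $K\langle F\rangle$ is $\cO$-res-constructible over $K$. The key idea is to use automorphisms of $\bE_2$ fixing $K$ to replace $F$ by a tuple whose elements each have finite $T$-support. To build such a $\chi \in \Aut(\bE_2/K)$, I enumerate $F = \{z_1,\dots,z_m\}$ and proceed by induction: if $z_k$ already has finite $T_{z_k}$, do nothing; otherwise, apply Lemma~\ref{lem:Example_simplifying_auto} with $S$ taken to be the union of the generating set $\{b_{i,\alpha} : i<\omega,\ \alpha<\omega\}$ of $K$ together with the $T$-supports of the already-simplified $z_1,\dots,z_{k-1}$. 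The resulting automorphism fixes $S$, hence does not disturb the simplification achieved so far, while trimming $T_{z_k}$ to finitely many indices.

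The hard part will be verifying that the sequence of simplifying automorphisms composes to an automorphism fixing $K$ elementwise; this is precisely where Lemma~\ref{lem:Example_simplifying_auto} does the heavy lifting, by giving freedom to prescribe the set of indices that the automorphism fixes. Once $\chi$ is in hand, Lemma~\ref{lem:finite_ok}(2) yields $\bE\langle \chi(z)\rangle \subseteq \bE\langle T_{\chi(z)}\rangle$ for each $z\in F$, so $K\langle \chi(F)\rangle \subseteq K\langle \bigcup_{z\in F} T_{\chi(z)}\rangle$. The latter extension is finitely generated over $K$ by additional $b_{i,\alpha}$'s whose residues are $\dcl_T$-independent over $\res_\cO(K)$, and is therefore $\cO$-res-constructible over $K$. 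By Corollary~\ref{cor:finite_dim_fact}, the intermediate finite-dimensional extension $K\langle \chi(F)\rangle$ is also $\cO$-res-constructible over $K$. Pulling back via $\chi^{-1}$, which fixes $K$ pointwise, transports res-constructibility to $K\langle F\rangle$, completing the proof.
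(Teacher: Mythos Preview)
Your proposal is correct and follows essentially the same approach as the paper's proof: reduce by re-indexing to showing that $K=\bE\langle B_i, M(B_j,B_k): i<\omega,\ j<k<\omega\rangle$ is weakly orthogonal, then for a finite $F\subseteq\bE_2$ use Lemma~\ref{lem:Example_simplifying_auto} inductively to produce $\chi\in\Aut(\bE_2/K)$ making each $T_{\chi(z)}$ finite, and conclude via Lemma~\ref{lem:finite_ok} and Corollary~\ref{cor:finite_dim_fact}. Your description of the inductive step (with $S$ consisting only of $b_{i,\alpha}$'s) is in fact slightly cleaner than the paper's, which nominally includes the $M(B_j,B_k)$'s in $S$; the point you flag as ``the hard part''---that the automorphism fixes $K$ and not just $\bE\langle b_{i,\alpha}:\alpha<\omega\rangle$---is handled by the additional property $\phi(M(B_\alpha,B_\beta))=M(\phi(B_\alpha),\phi(B_\beta))$ recorded in Lemma~\ref{lem:Example_auto}, since the chosen $b_{l,\delta}\in T_{z_k}\setminus S$ necessarily has $\delta\geq\omega$.
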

Example \ref{ex:Pathological} shows that we can have a non-res-constructible extension $\bE \prec \bE_2$ which satisfies the conclusion of Proposition \ref{prop:necessary_condition}. This suggests that res-constructibility is not a `local' property.

\bibliographystyle{abbrvnat}
\bibliography{biblio}

\end{document}